\newtheorem{theorem}{Theorem}
\newtheorem{lemma}[theorem]{Lemma}
\newtheorem{proposition}[theorem]{Proposition}
\newtheorem{corollary}[theorem]{Corollary}
\newtheorem{remark}[theorem]{Remark}
\numberwithin{equation}{section}
\numberwithin{figure}{section}
\numberwithin{theorem}{section}
\def\CC{{\mathcal E}}
\def\EE{{\mathcal E}}
\def\FF{{\mathcal F}}
\def\GG{{\mathfrak G}}
\def\P{{\mathcal P}}
\def\R{{\mathbb R}}
\def\RR{{\mathcal R}}
\def\Z{{\mathbb Z}}
\def\one{\mathbf 1}
\def\wN{{\widetilde{N}}}
\def\wX{{\bar X}}
\def\hM{{\widehat{M}}}
\def\hN{{\widehat{N}}}
\def\Hom{\operatorname{Hom}}
\def\Net{\operatorname{Net}}
\def\Poi{{\{\cdot,\cdot\}}}
\def\Rat{\operatorname{Rat}}
\def\diag{\operatorname{diag}}
\def\dim{\operatorname{dim}}
\def\id{\operatorname{id}}
\def\ind{\operatorname{ind}}
\def\s{\operatorname{sign}}
\def\val{\operatorname{val}}
\def\:{{:\ }}
\begin{document}

\title{ Poisson Geometry of Directed Networks in an Annulus}

\author{Michael Gekhtman}
%    Address of record for the research reported here
\address{Department of Mathematics, University of Notre Dame, Notre Dame,
IN 46556}
\email{mgekhtma@nd.edu}
%    \thanks will become a 1st page footnote.
%\thanks{The first author was supported in part by NSF Grant DMS \#0400484.}

%    Information for second author
\author{Michael Shapiro}
\address{Department of Mathematics, Michigan State University, East Lansing,
MI 48823}
\email{mshapiro@math.msu.edu}
%    \thanks will become a 1st page footnote.
%\thanks{The second author was supported in part by NSF Grants DMS \#0401178 and PHY\#0555346.}

%    Information for third author
\author{Alek Vainshtein}
\address{Department of Mathematics AND Department of Computer Science, University of Haifa, Haifa,
Mount Carmel 31905, Israel}
\email{alek@cs.haifa.ac.il}
%    \thanks will become a 1st page footnote.
%\thanks{The third author was supported in part by NSF Grant \#000000.}

\date\today
\subjclass[2000]{53D17, 14M15}
%\keywords{AMS-\LaTeX}

\begin{abstract} 
As a generalization of Postnikov's construction [P], we
define a map
from the space of edge weights of a directed network in an annulus
into a space
of loops in the Grassmannian. We then show that universal Poisson
brackets introduced
for the space of edge weights in [GSV3] induce a family of Poisson
structures on rational-valued
matrix functions and on the space
of loops in the Grassmannian. In the former case, this family
includes, for a particular kind of networks,
the Poisson bracket associated with the trigonometric R-matrix.
\end{abstract}

\maketitle

\bigskip 

\section{Introduction}

This is the second  in the series of four papers initiated 
by \cite{GSV3} and devoted to geometry behind directed networks on surfaces, with 
a particular emphasis on their Poisson properties.

In \cite{GSV3}, we concentrated on  Postnikov's construction \cite{Postnikov} that uses
weighted directed planar graphs to parametrize cells in Grassmannians. We found
that the space of edge weights of networks in a disk can be endowed with a natural family of Poisson brackets
(that we called {\em universal\/}) that "respects" the operation of concatenation of diagrams.
We have shown that, under Postnikov's parametrization, these Poisson brackets induce a two-parameter family of Poisson brackets on the Grassmannian. Every Poisson bracket in this family is compatible (in the sense of \cite{GSV1,GSV2}) with the cluster algebra on the Grassmannian described in \cite{GSV1,Scott} and, on the other hand, endows
the Grassmannian with a structure of a Poisson homogeneous space with respect
to the natural action of the general linear group equipped with an R-matrix Poisson-Lie
structure.

As was announced in \cite{GSV3}, the current paper builds a parallel theory for directed weighted networks in an annulus 
(or, equivalently, on a cylinder). First, we have to modify the definition of the boundary measurement map, whose
image now consists of rational valued matrix functions of an auxiliary parameter $\lambda$ associated with
the notion of a {\em cut\/} (see Section 1). We then show that the analogue of Postnikov's construction leads
to a map into the space of loops in the Grassmannian. Universal Poisson brackets for networks in an annulus are defined 
in exactly the same way as in the case of a disk. We show that they induce  a two-parameter family of Poisson brackets  
on rational-valued boundary measurement matrices. In
particular, when sources and sinks belong to distinct circles bounding the annulus, one of the generators 
of these family coincides with the Sklyanin R-matrix bracket associated with the trigonometric solution of the 
classical Yang-Baxter equation in $sl(n)$. Moreover, we prove that the two-parameter family of Poisson brackets can be 
further pushed-forward to the space of loops in the Grassmannian. In proving
the latter, we departed from the approach of \cite{GSV3} where the similar result was obtained via a more or less 
straightforward calculation. Such an approach would have been too cumbersome in our current setting. Instead, we found a 
way to utilize so-called {\em face weights\/} and  their behavior
under {\em path-reversal maps}. 

The forthcoming third paper in this series \cite{GSV4} focuses on  particular graphs in an annulus that can be used  to 
introduce a cluster algebra structure on the coordinate ring of the space
of normalized rational functions in one variable. This space is birationally equivalent, via the Moser map \cite{moser},  
to any minimal irreducible coadjoint orbit of the group of upper triangular  matrices associated with a Coxeter element 
of the permutation group. In this case, the Poisson bracket compatible with the cluster algebra structure coincides with 
the quadratic Poisson bracket studied in \cite{FayGekh1, FayGekh2} in the context of Toda flows on minimal orbits. 
We show that cluster transformations serve as B\"acklund-Darboux transformations between different minimal Toda flows. 
The fourth paper \cite{GSV5} solves, in the case of graphs in an annulus with one
source and one sink, the inverse problem of restoring the weights from the image
of the generalized Postnikov map. In the case of arbitrary planar graphs in a disk, this
problem was completely solved by Postnikov \cite{Postnikov} who proved that for a fixed minimal graph, the space of 
weights modulo gauge action is birational to its image. 
To the contrary, already for simplest graphs in an annulus, the corresponding map  can only be shown to be finite.

The original application of directed weighted planar networks was in the study of total positivity, 
both in $GL_n$ \cite{KarlinMacGregor, Brenti, BFZ, FZ_Intel, Fallat} and in Grassmannians \cite{Postnikov}.
We do not address this issue for networks in an annulus. It has been studied in a recent preprint
\cite{Pavlo}.

The paper is organized as follows.

In Section \ref{netsBM2}, we introduce a notion of a perfect  network in an annulus and
associate with every such network a {\em matrix of boundary measurements}. 
Each boundary measurement is shown to be a rational function in edge weights   and in an auxiliary parameter $\lambda$, see Corollary~\ref{sfree3}. Besides, we define the space
of face and trail weights, a generalization of the space of face weights studied in \cite{GSV3} for the case of networks in a disk, and provide its cohomological interpretation, see Section~\ref{cohomol}. 

In  Section \ref{PSrat}, we characterize all  {\it universal\/} Poisson brackets on the
space of edge weights   of a given network that respect the natural operation of concatenation of
networks, see Proposition~\ref{6param}. Furthermore, we establish that the family
of universal brackets induces a linear two-parameter family of Poisson brackets on
the space of face and trail weights, see Theorem~\ref{PSviay}, and hence on 
boundary measurement matrices, see Theorem~\ref{PSR}. This family
depends on a mutual location of sources and sinks, but not on the network itself.
We  provide an explicit description of this family in Propositions~\ref{PSRE} and~\ref{PSRE2}. An important tool in the proof of Theorem~\ref{PSR} is the realization theorem, see Theorem~\ref{realiz}, that claims that any rational matrix function can be realized as the boundary measurement matrix of a network with a given set of sources and sinks. 
Finally, if the sources and the sinks are separated, that is, all sources belong to one of the bounding circles of the annulus, and all sinks to the other bounding circle, one of the
generators of the 2-parametric family can be identified with the R-matrix Sklyanin bracket corresponding to the trigonometric R-matrix, see Theorem~\ref{sklyaninbr}.

In Section~\ref{PSongrloop}, the  boundary measurement map  defined by a network with $k$ sources, $n-k$ sinks and $n_1\le n$ boundary vertices on the outer boundary circle is extended to the Grassmannian  boundary measurement map into the
space $LG_k(n)$ of Grassmannian loops.  
The Poisson family on boundary measurement matrices allows us to
equip  $LG_k(n)$ with a  two-parameter family of Poisson brackets
$\P_{\alpha,\beta}^{n_1}$ in such a way that for any choice of a universal Poisson 
bracket on edge weights there is a unique member of $\P_{\alpha,\beta}^{n_1}$ 
that makes the Grassmannian boundary measurement map Poisson, see Theorem~\ref{PSLGr}. 
This latter family depends only on the number of sources and sinks and on the distribution of the boundary vertices between the bounding circles of the annulus. 
The main tool in the proof of Theorem~\ref{PSLGr} is the path reversal operation on networks and its properties, see 
Theorem~\ref{pathrev}. 

\section{Perfect planar networks and boundary measurements}
\label{netsBM2}

\subsection{Networks, cuts, paths and weights}
\label{PandW2}

Let $G=(V,E)$ be a directed planar graph drawn inside an annulus 
with the vertex set $V$ and the edge set $E$. 
%The graph is allowed to have loops and multiple edges.
Exactly $n$ of its vertices are located on the boundary circles of
the annulus and are called {\it boundary vertices\/}; $n_1\ge0$ of them lie on the outer circle, and $n_2=n-n_1\ge0$ on the inner circle. The graph is considered up to an isotopy relative to the boundary (with fixed boundary vertices).

Each boundary vertex is marked as a source or a sink. A {\it source\/} is
a vertex with exactly one outcoming edge and no incoming edges.
{\it Sinks\/} are defined in the same way, with the direction of the single edge
reversed.
All the internal vertices of $G$ have degree~$3$ and are of two types: either they have exactly one
incoming edge, or exactly one outcoming edge. The vertices of the first type are called
(and shown in figures) {\it white}, those of the second type, {\it black}.

A {\it cut\/} $\rho$ is an oriented non-selfintersecting curve starting at a {\it base point\/} on the inner circle and ending at a base point on the outer circle  considered up to an isotopy relative to the boundary (with fixed endpoints). 
%Without loss of generality 
We assume that the base points of the cut are distinct from the boundary vertices of $G$. 
%and that none of the internal vertices of $G$ lie on the cut. 
%Therefore, for any edge $e\in E$ its {\it intersection index\/} with $\rho$ is %well defined as an isotopy invariant. We denote this index by $\ind(e)$.
%Moreover, we assume that each intersection of an edge of $G$ with the cut is %transversal.
For an arbitrary oriented curve $\gamma$ with endpoints not lying on the cut $\rho$ we denote by $\ind(\gamma)$ the algebraic intersection number of $\gamma$ and $\rho$. Recall that each transversal intersection point of $\gamma$ and $\rho$ contributes to this number~$1$ if the oriented tangents to $\gamma$ and $\rho$ at this point form a positively oriented basis, and $-1$ otherwise. Non-transversal intersection points are treated in a similar way.

Let $x_1,\dots,x_d$ be independent variables.
A {\it perfect planar network in an annulus\/} $N=(G,\rho,w)$ is obtained from a graph $G$ equipped with a cut $\rho$ as above by assigning a weight $w_e\in \Z(x_1,\dots,x_d)$ to each edge $e\in E$. Below we occasionally write ``network'' instead of ``perfect planar network in an annulus''. Each network defines a rational map $w: \R^d\to \R^{|E|}$;
the {\it space of edge weights\/} $\EE_N$ is defined as the intersection of the image of $w$ with $(\R\setminus 0)^{|E|}$. In other words, a point in $\EE_N$ is a graph $G$ as above with edges weighted by nonzero real numbers obtained by specializing the variables $x_1,\dots,x_d$ in the expressions for $w_e$.

 A {\it path\/} $P$ in $N$ is an alternating sequence $(v_1, e_1,v_2,\dots,e_r, v_{r+1})$ of
vertices and edges such that $e_i=(v_i,v_{i+1})$ for any $i\in [1,r]$.
Sometimes we omit the names of the vertices and write $P=(e_1,\dots,e_r)$. 
A path is called a {\it cycle\/} if $v_{r+1}=v_1$ and a {\it simple cycle\/} if additionally $v_i\ne v_j$ for any other pair $i\ne j$.

To define the weight of a path we need the following construction.
Consider a closed oriented polygonal plane curve $C$. Let $e'$ and $e''$ be two consequent oriented segments of $C$, and let $v$ be their common vertex. We assume for simplicity that for any such pair $(e',e'')$, the cone spanned by $e'$ and $e''$  is not a line; in other words, if $e'$ and $e''$ are collinear, then they have the same direction. 
Observe that since $C$ is not necessary simple, there might be other edges of $C$ incident to $v$. 
%(see Figure~\ref{fig:concord} below). 
Let $l$ be an arbitrary oriented line. Define $c_l(e',e'')\in \Z/2\Z$ in the following way:
$c_l(e',e'')=1$ if the directing vector of $l$ belongs to the interior of the cone spanned by $e'$ and $e''$ 
and $c_l(e',e'')=0$  otherwise.
%(see Figure~\ref{fig:concord} for examples). 
Define $c_l(C)$ as the sum of $c_l(e',e'')$ over all pairs of consequent segments in $C$. 
It follows immediately from Theorem~1 in \cite{GrSh} that $c_l(C)$ does not depend on $l$, provided $l$ is not collinear to any of the segments in $C$.
The common value of $c_l(C)$ for different choices of $l$ is denoted by $c(C)$ and called the {\it concordance number\/} of $C$. In fact, $c(C)$ equals $\mod 2$ the rotation number of $C$; the definition of the latter is similar, but more complicated.

In what follows we assume without loss of generality that $N$ is drawn in such a way that all its edges and the cut are smooth curves. Moreover, any simple path in $N$ is a piecewise-smooth curve with no cusps,
%at any internal vertex of $N$, all the three edges intersect transversally, 
%no two tangent lines to the edges are collinear, 
at any boundary vertex of $N$ the edge and the circle intersect transversally,
%the tangent line to the edge is not tangent to the circle, 
and the same holds for the cut at both of its  base points.  
%moreover, the outer and the inner circle will be also drawn as polygonal curves.  
Given a path $P$ between a source $b'$ and a sink $b''$, we define a closed %polygonal 
piecewise-smooth curve $C_P$ in the following way: if both $b'$ and $b''$ belong to the same circle, $C_P$ is obtained by adding to $P$ the path between $b''$ and $b'$ that goes counterclockwise along the boundary of the corresponding circle. Otherwise, if $b'$ and $b''$ belong to distinct circles, $C_P$ is obtained by adding to $P$ the path that starts at $b''$, goes counterclockwise along the corresponding circle to the base point of the cut, follows the cut to the other base point and then goes counterclockwise along the other circle up to $b'$. 
Clearly, the obtained curve $C_P$ does not have cusps, so its concordance number $c(C_P)$ can be defined in a straightforward manner via polygonal approximation.

Finally the weight of $P$ is defined as
\begin{equation}\label{weightviapath}
w_P=w_P(\lambda)=(-1)^{c(C_P)-1}\lambda^{\ind(P)}\prod_{e\in P} w_e,
\end{equation}
where $\lambda$ is an auxiliary independent variable.
%called the {\it spectral parameter} 
%and $\ind(P)$ is the intersection number of $P$ and $\rho$. 
Occasionally, it will be convenient to assume that the internal vertices of $G$ do not lie on the cut and to rewrite the above formula as
\begin{equation}\label{weightviaedge}
w_P=(-1)^{c(C_P)-1}\prod_{e\in P} \bar w_e,
\end{equation}
where $\bar w_e=w_e\lambda^{\ind(e)}$ are {\it modified edge weights\/}. Observe that the weight of a path is a relative isotopy invariant, while modified edge weights are not. 
The weight of an arbitrary cycle in $N$ is defined in the same way via the concordance number of the cycle. 

If edges $e_i$ and $e_j$ in $P$ coincide and $i<j$, the path $P$ can be decomposed into the
path $P'=(e_1,\dots,e_{i-1},e_{i}=e_j,e_{j+1},\dots,e_{r})$
and the cycle $C^0=(e_{i},e_{i+1},\dots,e_{j-1})$. Clearly, $c(C_P)=c(C_{P'})+c(C^0)$, and hence 
\begin{equation}\label{offcycle}
w_P=-w_{P'}w_{C^0}.
\end{equation}

An example of a perfect planar network in an annulus is shown
in Fig.~\ref{fig:pergrann} on the left. It has two sources, $b$ on the outer circle 
and $b''$ on the inner circle, and one sink $b'$ on the inner circle. Each edge $e_i$ is labeled by its
weight. The cut is shown by the dashed line. 
The same network is shown in Fig.~\ref{fig:pergrann} on the right; 
it differs from the original picture by an isotopic deformation of the cut.

\begin{figure}[ht]
\begin{center}
\includegraphics[height=5cm]{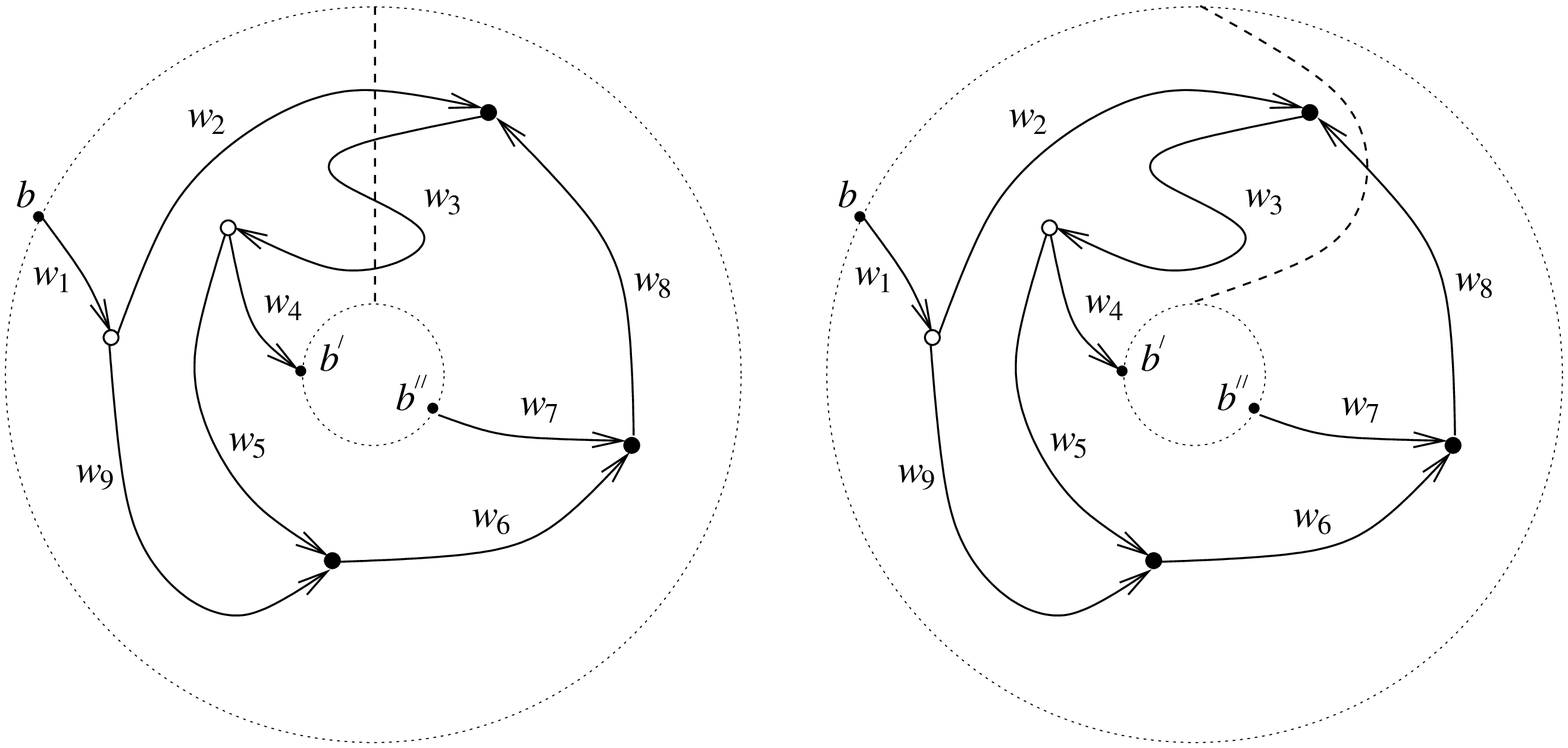}
\caption{A perfect planar network in an annulus}
\label{fig:pergrann}
\end{center}
\end{figure}

Consider the path $P_1=(e_1,e_2,e_3,e_4)$ from $b$ to $b'$. Its algebraic intersection number with the cut equals~$0$ (in the left picture, two intersection points contribute~$1$ each, and two other intersection points contribute~$-1$ each; in the right picture there are no intersection points). The concordance number of the corresponding closed curve $C_{P_1}$ equals~$1$. Therefore,~(\ref{weightviapath}) gives $w_{P_1}=w_1w_2w_3w_4$. On the other hand, the modified weights for the left picture are given by $\bar w_1=w_1$, $\bar w_2=\lambda w_2$, $\bar w_3=\lambda^{-1}w_3$, $\bar w_4=w_4$, and hence, computation via~(\ref{weightviaedge}) gives $w_{P_1}=\bar w_1\bar w_2\bar w_3\bar w_4=w_1w_2w_3w_4$. Finally, the modified weights for the relevant edges in the right picture coincide with the original weights, and we again get the same result.

Consider the path $P_2=(e_7,e_8,e_3,e_4)$ from $b''$ to $b'$. Its algebraic intersection number with the cut equals~$-1$, and the concordance number of the corresponding closed curve $C_{P_2}$ equals~$1$. Therefore, $w_{P_2}=\lambda^{-1}w_3w_4w_7w_8$. The same result can be obtained by using modified weights.

Finally, consider the path $P_3=(e_1,e_2,e_3,e_5,e_6,e_8,e_3,e_4)$ from $b$ to $b'$. Clearly,
$w_{P_3}=-\lambda^{-1}w_1w_2w_3^2w_4w_5w_6w_8$. The path $P_3$ can be decomposed into the path $P_1$ as above and a cycle $C^0=(e_3,e_5,e_6,e_8)$ with the weights $w_{P_1}=w_1w_2w_3w_4$ and $w_{C^0}=\lambda^{-1}w_3w_5w_6w_8$, hence relation~(\ref{offcycle}) yields the same expression for $w_{P_3}$ as before.

%Let us study the effect of moving a base point of the cut on the weights of paths. 
Let us see how moving a base point of the cut affects the weights of paths.
Let $N=(G,\rho,w)$ and $N'=(G,\rho',w)$ be two networks with the same graph and the same weights, and assume that the cuts $\rho$ and $\rho'$ are not isotopic. 
More exactly, let us start moving a base point of the cut in the counterclockwise direction. 
Assume that $b$ is the first boundary vertex in the counterclockwise direction from the base point of $\rho$ that is being moved. Clearly, nothing is changed while the base point and $b$ do not interchange.
Let $\rho'$ be the cut obtained after the interchange, and assume that no other interchanges occured.
Then the relation between the weight $w_P$ of a path $P$ in $N$ and its weight $w'_P$ in $N'$ is given by the following proposition.

\begin{proposition}\label{movecut}
For $N$ and $N'$ as above,
\[
w'_P(\lambda)=((-1)^{\alpha(P)}\lambda)^{\beta(b,P)}w_P,
\]
where $\alpha(P)$ equals~$0$ if the endpoints of $P$ lie on the same circle and~$1$ otherwise, and
\[
\beta(b,P)=\begin{cases}
1 \qquad\text{if $b$ is the sink of $P$},\\
-1\qquad\text{if $b$ is the source of $P$},\\ 
0 \qquad\text{otherwise}.
\end{cases}
\]
\end{proposition}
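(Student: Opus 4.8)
The plan is to isolate what actually changes when we pass from $N$ to $N'$. The edge weights $w_e$ and the product $\prod_{e\in P}w_e$ are literally the same for both networks, so by \eqref{weightviapath} only two quantities can possibly differ: the intersection index $\ind(P)$ and the concordance number $c(C_P)$. Moreover, since every boundary vertex has degree one, a path can meet $b$ only as its source or its sink; if $b$ is neither, then $P$ stays away from the small boundary region swept out as the base point crosses $b$, both $\ind(P)$ and $C_P$ are unaffected, and $w'_P=w_P$, matching $\beta(b,P)=0$. Hence I would assume from now on that $b$ is an endpoint of $P$ and compute the two contributions separately.

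First I would compute the change in the $\lambda$-exponent. Writing $w_P$ via the modified edge weights \eqref{weightviaedge}, the only modified weight that can change is $\bar w_e$ for the unique edge $e$ of $P$ incident to $b$, and it changes precisely by the change in $\ind(e)$. As the base point moves counterclockwise across $b$, the cut sweeps across this single edge, so $\ind'(P)-\ind(P)=\pm1$. The sign is fixed by the orientation convention for the algebraic intersection number together with the counterclockwise direction of the motion and the orientation of $e$ relative to the boundary (outgoing at a source, incoming at a sink); a local picture at $b$ shows the contribution is $+1$ when $b$ is the sink and $-1$ when $b$ is the source, i.e. exactly $\beta(b,P)$. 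This accounts for the factor $\lambda^{\beta(b,P)}$ and is valid regardless of whether the endpoints of $P$ lie on one or two circles.

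It remains to track the sign $(-1)^{c(C_P)}$, for which the two circles enter. If the endpoints of $P$ lie on the same circle, then by construction $C_P$ is built from $P$ and a boundary arc that does not involve the cut at all; moving the base point leaves $C_P$ unchanged up to isotopy, so $c(C_P)$ does not change and there is no extra sign, in agreement with $(-1)^{\alpha(P)\beta(b,P)}=1$ when $\alpha(P)=0$. If the endpoints lie on distinct circles, the closing portion of $C_P$ runs counterclockwise along the circle carrying $b$ up to (or away from) the base point. Because $b$ is the first boundary vertex counterclockwise from the base point being moved, dragging the base point across $b$ lengthens or shortens this counterclockwise arc by a full turn around that circle; consequently $C_P$ and $C'_P$ differ by one full loop, the rotation number changes by $\pm1$, and $c(C_P)$, being the rotation number mod $2$, changes by $1$. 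Thus $(-1)^{c'(C_P)-c(C_P)}=-1=(-1)^{\beta(b,P)}$ in this case, and combining the two contributions yields $w'_P=((-1)^{\alpha(P)}\lambda)^{\beta(b,P)}w_P$.

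The step I expect to be the main obstacle is the bookkeeping of orientations: making the sign in $\ind'(P)-\ind(P)=\beta(b,P)$ come out correctly from the intersection-number convention, and making the full-loop/rotation-number argument for the two-circle case rigorous, so that all conventions (orientation of the cut from the inner to the outer circle, counterclockwise motion of the base point, and counterclockwise closing arcs) remain consistent and produce exactly the factor $((-1)^{\alpha(P)}\lambda)^{\beta(b,P)}$ rather than its inverse.
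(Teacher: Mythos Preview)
Your proposal is correct and follows exactly the natural route the paper has in mind; the paper itself gives no argument beyond ``The proof is straightforward,'' and your decomposition into the change of $\ind(P)$ (yielding $\lambda^{\beta(b,P)}$) and the change of $c(C_P)$ (yielding $(-1)^{\alpha(P)\beta(b,P)}$) is precisely what that one-line proof is meant to unpack. The orientation bookkeeping you flag at the end is indeed the only place where care is needed, but it is routine once the conventions are fixed.
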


\proof 
The proof is straightforward.

\endproof

\begin{figure}[ht]
\begin{center}
\includegraphics[height=5cm]{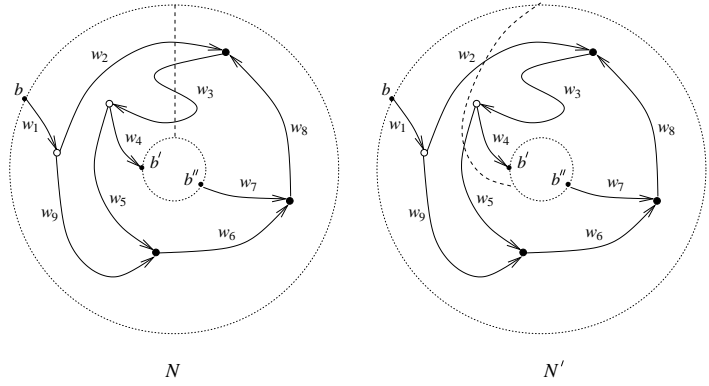}
\caption{Moving the base point of the cut}
\label{fig:pergrann2}
\end{center}
\end{figure}

For example, consider the networks $N$ and $N'$ shown in Fig.~\ref{fig:pergrann2}. 
The base point of the cut lying on the inner circle interchanges positions with the sink $b'$. The path $P_1$ from the previous example goes from the outer circle to the inner circle, so $\alpha(P_1)=1$; besides, $b'$ is the sink of $P_1$, so $\beta (b',P_1)=1$. Therefore, by Proposition~\ref{movecut}, $w'_{P_1}=-\lambda w_{P_1}=-\lambda   w_1w_2w_3w_4$, which coincides with the value obtained via~(\ref{weightviaedge}). The path $P_2$ from the previous example starts and ends at the inner circle, so $\alpha(P_2)=0$. Besides, $\beta(b',P_2)=1$, and hence, by Proposition~\ref{movecut}, $w'_{P_2}=\lambda w_{P_2}= w_1w_2w_3w_4$; once again, this coincides with the value obtained via~(\ref{weightviaedge}).

\subsection{Boundary measurements}\label{BM2} Given a
perfect planar network in an annulus as above, we label its boundary vertices $b_1,\dots,b_n$ in the following way. The boundary vertices lying on the outer circle are labeled $b_1,\dots,b_{n_1}$ in the counterclockwise order starting from the first vertex that follows after the base point of the cut. The boundary vertices lying on the inner circle are labeled $b_{n_1+1},\dots,b_n$ in the clockwise order starting from the first vertex that follows after the base point of the cut. For example, for the network $N$ in Fig.~\ref{fig:pergrann2}, the boundary vertices are labeled as $b_1=b$, $b_2=b''$, $b_3=b'$, while for the network $N'$,
one has $b_1=b$, $b_2=b'$, $b_3=b''$. 

The number of sources lying on the outer circle is denoted by $k_1$, and the corresponding set of indices, by
$I_1\subset [1,n_1]$; the set of the remaining $m_1=n_1-k_1$ indices is denoted by $J_1$. Similarly, the number of sources lying on the inner circle is denoted by $k_2$, and the corresponding set of indices, by
$I_2\subset [n_1+1,n]$; the set of the remaining $m_2=n_2-k_2$ indices is denoted by $J_2$. Finally, we denote $I=I_1\cup I_2$ and $J=J_1\cup J_2$; the cardinalities of $I$ and $J$ are denoted $k=k_1+k_2$ and $m=m_1+m_2$.

Given a source $b_i$, $i\in I$, and a sink $b_j$, $j\in J$, we define the {\it boundary measurement\/} $M(i,j)$ as the
sum of the weights of all paths starting at $b_i$ and ending at $b_j$. Assume first that the weights of the paths are calculated via~(\ref{weightviaedge}).
The boundary measurement thus defined is a formal infinite series
in variables $\bar w_e$, $e\in E$. The following 
%analog of Proposition~\ref{sfree} 
proposition holds true in $\Z[[\bar w_e, e\in E]]$.

\begin{proposition}\label{sfree2}
Let $N$ be a perfect planar network in an annulus, then each boundary measurement in $N$ is a rational function in the modified weights $\bar w_e$, $e\in E$. 
\end{proposition}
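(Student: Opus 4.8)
The plan is to rewrite the infinite series $M(i,j)=\sum_P w_P$ as a single entry of the inverse of a finite matrix whose entries are signed monomials in the modified weights $\bar w_e$; rationality then follows immediately from Cramer's rule. The only genuine difficulty is the sign $(-1)^{c(C_P)-1}$ in~(\ref{weightviaedge}): a priori it couples all the edges of $P$ together and so obstructs any factorization of $w_P$ over the edges of $P$. The heart of the argument is therefore to \emph{localize} the concordance number.

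First I would fix once and for all a generic oriented line $l$, avoiding the finitely many excluded directions, and use the $l$-independence of $c(C_P)$ established via Theorem~1 in \cite{GrSh} to write, for a path $P=(e_1,\dots,e_r)$ from a source $b_i$ to a sink $b_j$, the value $c(C_P)=\sum c_l(e',e'')$ as a sum over all consecutive pairs of segments of a polygonal approximation of $C_P$. These pairs fall into three groups: pairs internal to a single edge $e_t$ of $P$, pairs straddling an internal vertex where $P$ turns from $e_t$ to $e_{t+1}$, and pairs belonging to the closing arc that runs from $b_j$ back to $b_i$ along the boundary circles (and, if needed, the cut). The first group contributes, for each traversed edge $e$, a fixed element of $\Z/2\Z$ depending only on $e$; the second contributes a fixed element depending only on the ordered pair of edges at the turn; and the third depends only on the endpoints $b_i,b_j$. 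Consequently
\[
(-1)^{c(C_P)-1}=(-1)^{s(i,j)}\prod_{t=1}^{r}\epsilon_{e_t}\prod_{t=1}^{r-1}\eta_{e_t,e_{t+1}},
\]
with $\epsilon_e,\eta_{e',e''}\in\{\pm1\}$ fixed and $s(i,j)$ depending only on $i,j$.

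Next I would introduce the transfer matrix $A$ indexed by the directed edges of $G$, setting $A_{e',e''}=\eta_{e',e''}\,\epsilon_{e''}\,\bar w_{e''}$ when the head of $e'$ is an internal vertex equal to the tail of $e''$, and $A_{e',e''}=0$ otherwise. By the factorization above, a path $P=(e_1,\dots,e_r)$ from $b_i$ to $b_j$ has weight $w_P=(-1)^{s(i,j)}\epsilon_{e_1}\bar w_{e_1}\,(A^{r-1})_{e_1,e_r}$, where the products of $A$-entries along $P$ reproduce exactly the per-edge and per-turn signs together with the remaining modified weights. Since the source $b_i$ has a unique outgoing edge $e_i^{\mathrm{out}}$ and the sink $b_j$ a unique incoming edge $e_j^{\mathrm{in}}$, the first and last edges of every such path are forced, and summing over all paths (equivalently over $r\ge 1$) yields
\[
M(i,j)=(-1)^{s(i,j)}\,\epsilon_{e_i^{\mathrm{out}}}\,\bar w_{e_i^{\mathrm{out}}}\,\bigl[(I-A)^{-1}\bigr]_{e_i^{\mathrm{out}},\,e_j^{\mathrm{in}}}.
\]
Here every entry of $A$ is a monomial of degree one in the $\bar w_e$, so $\sum_{s\ge0}A^{s}$ converges in $\Z[[\bar w_e]]$ to $(I-A)^{-1}$ and matches, term by term, the formal series defining $M(i,j)$.

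Finally, $\det(I-A)$ is a polynomial in the $\bar w_e$ with constant term $1$ (set every $\bar w_e=0$), hence a unit in the power-series ring; by Cramer's rule each entry of $(I-A)^{-1}$ is a polynomial cofactor divided by $\det(I-A)$, so $M(i,j)$ is a rational function of the modified weights, as claimed. I expect the localization step to be the main obstacle, since it is the only place where planarity and the precise combinatorics of the concordance number are used; once the sign is written as a product of edge-local and turn-local factors times an endpoint factor, the passage to $(I-A)^{-1}$ and the rationality conclusion are purely formal.
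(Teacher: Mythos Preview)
Your argument is correct and proves the proposition, but it follows a genuinely different route from the paper's. The paper simply remarks that the proof by induction on the number of internal vertices carries over verbatim from the disk case treated in \cite{GSV3}, with edge weights replaced by modified weights and the counterclockwise cyclic order replaced by the cyclic order $\bmod\ n$ induced by the labeling. That induction proceeds by splitting off a boundary vertex and its neighboring internal vertex, exactly as in the construction preceding Lemma~\ref{recalcan}, and expresses each boundary measurement of $N$ as a rational expression in boundary measurements of a smaller network $\hN$.

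Your transfer-matrix approach is more direct and delivers the closed formula $M(i,j)=(-1)^{s(i,j)}\epsilon_{e_i^{\mathrm{out}}}\bar w_{e_i^{\mathrm{out}}}\bigl[(I-A)^{-1}\bigr]_{e_i^{\mathrm{out}},e_j^{\mathrm{in}}}$ in one stroke. The localization of $(-1)^{c(C_P)}$ into edge-local, turn-local, and endpoint factors is the real content, and your justification---fix a polygonal approximation of each edge and of each closing arc, choose a generic direction $l$ avoiding the finitely many segment directions that now occur, and group the consecutive-pair contributions $c_l(e',e'')$ according to where the corner sits---is sound; the junctions at $b_i$ and $b_j$ do fold into $s(i,j)$ because the first and last edges of every path between fixed endpoints are forced. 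What the paper's inductive approach buys, by contrast, is not the rationality statement itself but a recursive machinery reused throughout: the same vertex-splitting and the identities of Lemma~\ref{recalcan} drive the later proofs of Proposition~\ref{PSRE} and Theorem~\ref{pathrev}. Your method, while cleaner for the bare rationality claim and in fact insensitive to planarity once the sign has been localized, does not feed into those later inductive computations.
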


\proof
The proof by induction on the number of internal vertices literally follows the proof of the similar statement in \cite{GSV3}. 
The only changes are that modified weights are used instead of original weights and that the counterclockwise cyclic order $\prec$ is replaced by the cyclic order $\mod n$ induced by the labeling.
\endproof

Taking into account the definition of the modified weights, we immediately get the following corollary.

\begin{corollary}\label{sfree3}
Let $N$ be a perfect planar network in an annulus, then each boundary measurement in $N$ is a rational function in the parameter $\lambda$ and the weights $w_e$, $e\in E$. 
\end{corollary}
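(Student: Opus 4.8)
The plan is to deduce the corollary directly from Proposition~\ref{sfree2} by substituting the definition of the modified weights. First I would invoke Proposition~\ref{sfree2} to write each boundary measurement $M(i,j)$ as a rational function $R\bigl(\bar w_e\colon e\in E\bigr)$ in the modified weights, that is, as a quotient $P/Q$ of polynomials with integer coefficients in the variables $\bar w_e$, with $Q\not\equiv 0$.

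Next I would substitute $\bar w_e=w_e\lambda^{\ind(e)}$. The one point to check is that the exponent $\ind(e)$, being the algebraic intersection number of the edge $e$ with the cut $\rho$, is an integer; hence each $\bar w_e$ is a Laurent monomial in $\lambda$ with coefficient $w_e$, and in particular a rational function of $\lambda$ and $w_e$. Since the class of rational functions is closed under substitution of rational functions for its variables, the resulting expression for $M(i,j)$ is a rational function in $\lambda$ and the edge weights $w_e$, $e\in E$.

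I expect no real obstacle here: the substantive work is already contained in Proposition~\ref{sfree2}, and what remains is the elementary observation that the substitution preserves rationality. The only mild subtlety is that $\ind(e)$ may be negative, so the $\bar w_e$ are Laurent rather than ordinary monomials in $\lambda$; after multiplying the numerator and the denominator of $R$ by a sufficiently high power of $\lambda$ one recovers an honest quotient of polynomials in $\lambda$ and the $w_e$, which confirms the claim.
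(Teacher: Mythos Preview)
Your proposal is correct and follows exactly the same approach as the paper: the paper's proof consists of the single sentence ``Taking into account the definition of the modified weights, we immediately get the following corollary,'' which is precisely the substitution $\bar w_e = w_e\lambda^{\ind(e)}$ into the rational expression furnished by Proposition~\ref{sfree2}. Your write-up simply makes this step explicit.
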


For example, the boundary measurement $M(1,2)$ in the network $N'$ shown on Fig.~\ref{fig:pergrann2} equals
\[
\frac{w_1w_3w_4(w_6w_8w_9-\lambda w_2)}{1+\lambda^{-1}w_3w_5w_6w_8}.
\]

Boundary measurements can be organized into a $k\times m$ {\it
boundary measurement matrix\/} $M_N$ exactly as in the case of planar networks in the disk: let $I=\{i_1<i_2<\dots<i_k\}$ and $J=\{j_1<j_2<\dots<j_m\}$, then 
%We define 
$M_N=(M_{pq})$, $p\in [1,k]$, $q\in [1,m]$, where $M_{pq}=M({i_p},{j_q})$. 
Let $\Rat_{k,m}$ stand for the space of real rational $k\times m$ matrix functions
in one variable. Then each network $N$ defines a map
$\EE_N\to \Rat_{k,m}$ given by $M_N$ and called the {\it boundary
measurement map\/} corresponding to $N$. 

The boundary measurement matrix has a block structure
\[
M_N=\begin{pmatrix} 
M_1 & M_2\\
M_3 & M_4
\end{pmatrix},
\]
where $M_1$ is $k_1\times m_1$ and $M_4$ is $k_2\times m_2$. Moving a base point of the cut changes the weights of paths as described in Proposition~\ref{movecut}, which affects $M_N$ in the following way. Define
\[
\Lambda_+=\begin{pmatrix}
0 & 1 & \dots & 0\\
\vdots & \vdots & \ddots & \vdots\\
0 & 0 & \dots & 1\\
\lambda^{-1} & 0 & \dots & 0
\end{pmatrix}, \qquad
\Lambda_-=\begin{pmatrix}
0 & 1 & \dots & 0\\
\vdots & \vdots & \ddots & \vdots\\
0 & 0 & \dots & 1\\
-\lambda^{-1} & 0 & \dots & 0
\end{pmatrix},
\]
then interchanging the base point of the cut with $b_1$ implies transformation
\[
\begin{pmatrix} 
M_1 & M_2\\
M_3 & M_4
\end{pmatrix}
\mapsto
\begin{pmatrix} 
\Lambda_+ M_1 & \Lambda_- M_2\\
M_3 & M_4
\end{pmatrix},
\]
if $b_1$ is a source and
\[
\begin{pmatrix} 
M_1 & M_2\\
M_3 & M_4
\end{pmatrix}
\mapsto
\begin{pmatrix} 
M_1\Lambda_+^{-1}  & M_2\\
M_3\Lambda_- ^{-1} & M_4
\end{pmatrix},
\]
if $b_1$ is a sink, 
while interchanging the base point of the cut with $b_n$ yields
\[
\begin{pmatrix} 
M_1 & M_2\\
M_3 & M_4
\end{pmatrix}
\mapsto
\begin{pmatrix} 
M_1 & M_2\\
\Lambda_+^T M_3 & \Lambda_-^T M_4
\end{pmatrix}.
\]
if $b_n$ is a source and
\[
\begin{pmatrix} 
M_1 & M_2\\
M_3 & M_4
\end{pmatrix}
\mapsto
\begin{pmatrix} 
M_1  & M_2(\Lambda_+^{-1})^T\\
M_3 & M_4(\Lambda_- ^{-1})^T
\end{pmatrix},
\]
if $b_1$ is a sink. Note that $\Lambda_+$ and $\Lambda_-$ are $k_1\times k_1$ in the first case, $m_1\times m_1$ in the second case, $k_2\times k_2$ in the third 
case and $m_2\times m_2$ in the fourth case.

\subsection{The space of face and trail weights}\label{cohomol}
Let $N=(G,\rho,w)$ be a perfect network. 
Consider the $\Z$-module $\Z^E$ generated by the edges of $G$. Clearly, points of $\EE_N$ can be identified with the elements of $\Hom(\Z^E,\R^*)$ via $w(\sum n_ie_i)=\prod w_{e_i}^{n_i}$, where $\R^*$ is the abelian multiplicative group $\R\setminus 0$. Further, consider the $\Z$-module $\Z^V$ generated by the vertices of $G$ and its $\Z$-submodule $\Z^{V_0}$ generated by the internal vertices. An arbitrary element $\varphi\in \Hom(\Z^V,\R^*)$ acts on $\EE_N$ as follows: if $e=(u,v)$ then 
\[
w_e\mapsto w_e\frac {\varphi(v)}{\varphi(u)}.
\]
Therefore, the weight of a path between the boundary vertices $b_i$ and $b_j$ is multiplied by $\varphi(b_j)/\varphi(b_i)$. It follows that the {\it gauge group\/} $\GG$, which preserves the weights of all paths between  boundary vertices, consists of all $\varphi\in \Hom(\Z^V,\R^*)$ such that $\varphi(b)=1$ for any boundary vertex $b$, and can be identified with $\Hom(\Z^{V_0},\R^*)$. Thus, the boundary measurement map $\EE_N\to\Rat_{k,m}$ factors through the quotient space $\FF_N=\EE_N/\GG$ as follows:
$M_N=M_N^\FF\circ y$, where $y: \EE_N\to \FF_N$ is the projection and $M_N^\FF$ is a map
$\FF_N\to \Rat_{k,m}$. The space $\FF_N$ is called the {\it space of face and trail weights\/} for the following reasons. 

First, by considering the cochain complex
\[
0\to \GG\to \EE_N\to 0
\]
with the coboundary operator $\delta:\GG\to \EE_N$ defined by $\delta(\varphi)(e)=\varphi(v)/\varphi(u)$ for $e=(u,v)$, we can identify $\FF_N$ with the first relative cohomology $H^1(G,\partial G;\R^*)$ of the complex, where $\partial G$ is the set of all boundary vertices of $G$.

Second, consider a slightly more general situation, when the annulus is replaced by an arbitrary Riemann surface $\Sigma$ with the boundary $\partial\Sigma$, and $G$ is embedded into $\Sigma$ in such a way that all vertices of degree 1 belong to $\partial \Sigma$ (boundary vertices). Then the exact sequence of relative cohomology with coefficients in $\R^*$ gives
\begin{align*}
0\to H^0(G\cup\partial\Sigma,\partial\Sigma)\to &H^0(G\cup\partial\Sigma)\to
H^0(\partial\Sigma)\to\\ &H^1(G\cup\partial\Sigma,\partial\Sigma)\to H^1(G\cup\partial\Sigma)\to H^1(\partial\Sigma)\to 0.
\end{align*}

Evidently, $H^1(G,\partial G)\simeq H^1(G\cup\partial\Sigma,\partial\Sigma)$. Next,
$H^0(G\cup\partial\Sigma,\partial\Sigma)=0$, since each connected component of $G$ is connected to at least one connected component of $\partial\Sigma$, and hence
\[
H^1(G,\partial G)\simeq H^1(G\cup\partial\Sigma)/H^1(\partial\Sigma)\oplus H^0(\partial\Sigma)/H^0(G\cup\partial\Sigma)
%\ker (H^1(G\cup\partial\Sigma)\to H^1(\partial\Sigma))\oplus
%H^0(\partial\Sigma)/\im(H^0(G\cup\partial\Sigma))
=\FF_N^f\oplus \FF_N^t.
\]

The space $\FF_N^f$ can be described as follows. Graph $G$ divides $\Sigma$ into a finite number of connected components called
\emph{faces}. The boundary of each face consists of edges of $G$ and, possibly, of several arcs of $\partial\Sigma$. A face is called {\it bounded\/} if its boundary contains only edges of $G$ and {\it unbounded\/} otherwise. 
%In this Section we additionally require that each edge of $G$ belongs to a path from a %source to a sink. This is a technical condition that ensures that the two faces separated %by an edge are distinct. Clearly, the edges that violate this condition do not influence %the boundary measurement map and may be eliminated from the graph.

Given a face $f$, we define its {\it face weight\/} $y_f$ as the function on $\EE_N$ that assigns to the edge weights $w_e$, $e\in E$, the value
\begin{equation}\label{faceweight}
y_f=\prod_{e\in\partial f}w_e^{\gamma_e},
\end{equation}
where $\gamma_e=1$ if the direction of $e$ is compatible with the counterclockwise orientation of the boundary $\partial f$ and $\gamma_e=-1$ otherwise. It follows immediately from the definition that face weights are invariant under the gauge group action, and hence are functions on $\FF_N^f$, and, moreover, form a basis in the space of such functions.

Consider now the space $\FF_N^t$. If $\Sigma$ is a disk, then $\FF_N^t=0$, and hence $\FF_N$ and $\FF_N^f$ coincide. This case was studied in \cite{GSV3}, 
and the space $\FF_N$ was called there the {\it space of face weights}. If $\Sigma$ is an annulus, there are two possible cases. Indeed, $\dim H^1(\partial\Sigma)=2$. The dimension of $H^1(G\cup\partial\Sigma)$ is either $2$ or $1$, depending on the existence of a trail connecting the components of $\partial\Sigma$. Here a {\it trail\/} is a sequence $(v_1,\dots,v_{k+1})$ of vertices such that either $(v_i,v_{i+1})$ or $(v_{i+1},v_i)$ is an edge in $G$ for all $i\in [1,k]$ an the endpoints $v_1$ and $v_{k+1}$ are boundary vertices of $G$. Given a trail $t$, the {\it trail weight\/} $y_t$ is defined as
\[
y_t=\prod_{i=1}^k w(v_i,v_{i+1}),
\]
where 
\[
w(v_i,v_{i+1})=\begin{cases} 
w_e\qquad\text{if $e=(v_i,v_{i+1})\in E$},\\
w_e^{-1}\qquad\text{if $e=(v_{i+1},v_i)\in E$}.
\end{cases}
\] 
Clearly, the trail weights are invariant under the action of the gauge group.

If $G$ does not contain a trail connecting the inner and the outer circles, then $\dim  H^1(G\cup\partial\Sigma)=2$, and hence $\FF_N^t=0$. Otherwise,  $\dim  H^1(G\cup\partial\Sigma)=1$,  and hence $\dim\FF_N^t=1$. The functions on $\FF_N^t$ are generated by the weight of any connecting trail.

\section{Poisson properties of the boundary
measurement map}\label{PSrat}

\subsection{Poisson structures on the spaces $\EE_N$ and $\FF_N$}\label{PSSC} 
The construction of a Poisson structure on the space $\EE_N$ for perfect planar networks in an annulus is a straightforward extension of the corresponding construction for the case of the disk studied in \cite{GSV3}. 
 Let G be a directed planar graph in an annulus as
described in Section~\ref{PandW2}.
A pair $(v,e)$ is called a {\it flag\/} if $v$ is an endpoint of $e$.
To each internal vertex $v$ of $G$ we
assign a 3-dimensional space $(\R\setminus 0)_v^3$ with coordinates $x_v^1,
x_v^2, x_v^3$. We equip each $(\R\setminus 0)_v^3$ with a Poisson bracket
$\{\cdot,\cdot\}_v$. It is convenient to assume that the 
%{\it half-edges\/} incident to $v$
flags involving $v$ 
 are labeled by the coordinates, as shown on Figure~\ref{fig:trefoils}.

\begin{figure}[ht]
\begin{center}
\includegraphics[height=1.5cm]{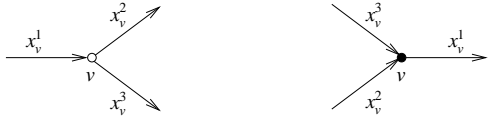}
\caption{Edge labeling for $\R_v^3$} \label{fig:trefoils}
\end{center}
\end{figure}

Besides, to each boundary vertex $b_j$ of $G$ we assign a
1-dimensional space $(\R\setminus 0)_j$ with the coordinate $x_j^1$ (in accordance with the above convention, this coordinate labels the 
%half-edge incident to 
unique flag involving
$b_j$). Define $\RR$
to be the direct sum of all the above spaces; thus, the dimension
of $\RR$ equals twice the number of edges in $G$. Note that $\RR$ is
equipped with a Poisson bracket $\{\cdot,\cdot\}_\RR$, which is
defined as the direct sum of the brackets $\{\cdot,\cdot\}_v$; that is, $\{x,y\}_\RR=0$ whenever $x$ and $y$ are not defined on the same $(\R\setminus 0)_v^3$. We
say that the bracket $\{\cdot,\cdot\}_\RR$ is {\it universal\/} if
each of $\{\cdot,\cdot\}_v$ depends only on the color of the
vertex $v$.

Define the weights $w_e$ by 
\begin{equation}\label{connect}
w_e=x^i_vx^j_u, 
\end{equation}
provided the 
% edge $e=(u,v)$ is labeled by $x^i_v$ at $v$ and by $x^j_u$ at $u$. 
flag $(v,e)$ is labeled by $x^i_v$ and the flag $(u,e)$ is labeled by $x^j_u$.
In other words, the weight of an edge is defined as the product of the weights of the two 
%half-edges constituting  
flags involving 
this edge.
Therefore, in this case the space of edge weights $\CC_N$ coincides with the entire $(\R\setminus 0)^{|E|}$, and the weights define a {\it weight map\/} $w\: (\R\setminus 0)^d\to(\R\setminus 0)^{|E|}$. 
We require the pushforward of 
$\{\cdot,\cdot\}_\RR$ to $(\R\setminus 0)^{|E|}$ by the weight map to be a well defined 
Poisson bracket; this can be regarded as an analog
of the Poisson--Lie property for groups.

\begin{proposition}\label{6param}
Universal Poisson brackets $\{\cdot,\cdot\}_\RR$ such that the
weight map $w$ is Poisson form a $6$-parametric family defined
by relations
\begin{equation}\label{6parw}
\{x^i_v,x^j_v\}_v=\alpha_{ij}x^i_vx^j_v, \quad i,j\in [1,3], i\ne
j,
\end{equation}
at each white vertex $v$ and
\begin{equation}\label{6parb}
\{x^i_v,x^j_v\}_v=\beta_{ij}x^i_vx^j_v, \quad i,j\in [1,3], i\ne
j,
\end{equation}
at each black vertex $v$.
\end{proposition}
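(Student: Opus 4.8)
The plan is to prove both inclusions: that every universal bracket making the weight map $w$ Poisson has the stated log-canonical form, and conversely that each choice of the six parameters $\alpha_{ij},\beta_{ij}$ (with $i<j$) genuinely defines such a bracket. Throughout I would use the criterion that the pushforward of $\{\cdot,\cdot\}_\RR$ by $w$ is well defined if and only if $\{w_e,w_{e'}\}_\RR$ is a function of the edge weights $(w_f)_{f\in E}$ for every pair of edges $e,e'$; since the $w_e$ are the coordinates on $(\R\setminus 0)^{|E|}$, the Leibniz rule propagates this condition from generators to all functions.

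First I would record the local computation. Writing $w_e=x_v^ix_u^j$ and $w_{e'}=x_{v'}^{i'}x_{u'}^{j'}$ and using that $\{\cdot,\cdot\}_\RR$ is a direct sum (so $\{x_a^p,x_b^q\}_\RR=0$ unless $a=b$), the Leibniz expansion of $\{w_e,w_{e'}\}_\RR$ retains only the terms coming from vertices shared by $e$ and $e'$. A boundary vertex carries a one-dimensional space and hence contributes nothing, so only shared \emph{internal} vertices matter. This already settles sufficiency: if $\{x_v^i,x_v^j\}_v=c^{(v)}_{ij}x_v^ix_v^j$ with $c^{(v)}=\alpha$ or $\beta$ according to the color of $v$, then a shared vertex $v$ contributes $c^{(v)}_{ii'}w_ew_{e'}$, and summing over shared vertices gives $\{w_e,w_{e'}\}_\RR=\big(\sum_v c^{(v)}_{ii'}\big)w_ew_{e'}$, manifestly a function of the edge weights. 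I would also note that a log-canonical bracket $\{x^i,x^j\}=c_{ij}x^ix^j$ becomes a constant bracket in the coordinates $\log x^i$ and therefore automatically satisfies the Jacobi identity, so each choice of parameters really is Poisson.

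The heart of the matter is necessity. To pin down $\{x_v^i,x_v^j\}_v$ at a white vertex, I would test the well-definedness condition on the simplest network containing $v$: the ``star'' in which the incoming edge of $v$ issues from a source and the two outgoing edges end at two distinct sinks, so the three neighbors of $v$ are distinct boundary vertices $u_1,u_2,u_3$ (one checks this is a legitimate perfect network). With $w_r=x_v^rx_{u_r}^1$, the only nonzero cross-bracket is at $v$, giving $\{w_1,w_2\}_\RR=x_{u_1}^1x_{u_2}^1\,F(x_v^1,x_v^2,x_v^3)$ where $F=\{x_v^1,x_v^2\}_v$. Well-definedness forces this to equal a function of $w_1,w_2,w_3$, hence to be invariant under the fiber action $(x_v^r,x_{u_r}^1)\mapsto(t_rx_v^r,t_r^{-1}x_{u_r}^1)$ that fixes every $w_r$. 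Taking $t_1=t_2=1$ forces $F$ to be independent of $x_v^3$, and the remaining invariance gives $F(t_1x^1,t_2x^2)=t_1t_2F(x^1,x^2)$, whose only solutions are $F=c\,x^1x^2$. Repeating with the pairs $(1,3)$ and $(2,3)$, and then at black vertices, yields $\{x_v^i,x_v^j\}_v=c_{ij}x_v^ix_v^j$ for all $i\ne j$. Universality makes the constants depend only on the color, which I rename $\alpha_{ij},\beta_{ij}$; antisymmetry leaves three of each, i.e.\ exactly six parameters.

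The main obstacle is the functional-equation step in the necessity direction: one must be certain that fiber-invariance genuinely forces the monomial form and excludes additive corrections or dependence on the third coordinate. The clean resolution is the homogeneity computation above, so the real work is (i) exhibiting a valid test network that localizes the constraint to a single vertex and (ii) checking that the fiber of $w$ over the star is precisely the three-parameter torus of rescalings described above (the kernel of the log-linear flag-to-edge map). Everything else reduces to the routine Leibniz bookkeeping of the sufficiency direction.
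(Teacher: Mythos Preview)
Your argument is correct and follows essentially the same route as the paper: both compute $\{w_e,w_{\bar e}\}_\RR$ for two edges sharing an internal vertex $v$, obtain $x_u^k x_{\bar u}^l\{x_v^i,x_v^j\}_v$, and use well-definedness of the pushforward to force $\{x_v^i,x_v^j\}_v=c_{ij}x_v^ix_v^j$. You are more thorough than the paper in three respects: you make the fiber-invariance (homogeneity) step explicit via the torus action on the star network, you handle all three pairs $(i,j)$ rather than just the two outgoing flags, and you explicitly verify the sufficiency direction and the Jacobi identity, both of which the paper leaves implicit.
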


\begin{proof} Indeed, let $v$ be a white vertex, and let $e=(v,u)$ and
$\bar e=(v,\bar u)$ be the two outcoming edges. By definition,
there exist $i,j,k,l\in [1,3]$, $i\ne j$, such that
$w_e=x_v^ix_u^k$, $w_{\bar e}=x_v^jx_{\bar u}^l$. Therefore,
$$
\{w_e,w_{\bar e}\}_N=\{x_v^ix_u^k,x_v^jx_{\bar u}^l\}_\RR=x_u^kx_{\bar
u}^l\{x_v^i,x_v^j\}_v,
$$
where $\{\cdot,\cdot\}_N$ stands for the pushforward of
$\{\cdot,\cdot\}_\RR$. Recall that the Poisson bracket in $(\R\setminus 0)_v^3$
depends only on $x_v^1$, $x_v^2$ and $x_v^3$. Hence the only
possibility for the right hand side of the above relation to be a
function of $w_e$ and $w_{\bar e}$ occurs when
$\{x^i_v,x^j_v\}_v=\alpha_{ij}x^i_vx^j_v$, as required.

Black vertices are treated in the same way.
\end{proof}

The 6-parametric family of universal Poisson brackets described in Proposition~\ref{6param} induces a 
6-parametric family of Poisson brackets $\Poi_N$ on $\EE_N$. 
Our next goal is to study the pushforward of this family to $\FF_N$ by the map $y$. 

%Let $N=(G,\rho,w)$ be a perfect planar network. In what follows it will be convenient to assume that boundary 
%vertices are colored in {\it gray}.  Define the {\it directed dual network\/} $N^*=(G^*,w^*)$ as follows. 
%Vertices of $G^*$ are the faces of $N$. Edges of $G^*$ correspond to the edges of $N$ with endpoints of 
%different colors; note that there might be several edges between the same pair of vertices in $G^*$.
%An edge $e^*$ of $G^*$ corresponding to $e$ is directed in such a way that the white endpoint of $e$ 
%(if it exists) lies to the left of $e^*$ and the black endpoint of $e$ (if it exists) lies to the right of $e$. 
%The weight $w^*(e^*)$ equals $\alpha-\beta$ if the endpoints of $e$ are white and black, $\alpha$ if the 
%endpoints of $e$ are white and gray and $-\beta$ if the endpoints of $e$ are black and gray, where $\alpha$ 
%and $\beta$ are given by
%\begin{equation}\label{cond}
%\alpha=\alpha_{23}+\alpha_{13}-\alpha_{12}, \qquad \beta=\beta_{23}+\beta_{13}-\beta_{12}.
%\end{equation}
%For a trail $t$, $t^*$ denotes the set of all edges of $N^*$ corresponding to the edges of $t$.

%An example of a directed planar network and its directed dual network is given on Fig.~\ref{fig:dualnet}.

%\begin{figure}[ht]
%\begin{center}
%\includegraphics[height=4cm]{dualnet.eps}
%\caption{Directed planar network and its directed dual} \label{fig:dualnet}
%\end{center}
%\end{figure}

\begin{theorem}\label{PSviay}
The 6-parametric family  $\Poi_N$ induces a 2-parametric family of Poisson brackets $\{\cdot,\cdot\}_{\FF_N}$ 
on $\FF_N$ with parameters $\alpha$ and $\beta$ given by
\begin{equation}\label{cond}
\alpha=\alpha_{23}+\alpha_{13}-\alpha_{12}, \qquad \beta=\beta_{23}+\beta_{13}-\beta_{12}.
\end{equation}
\end{theorem}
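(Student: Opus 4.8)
The plan is to linearize the whole picture by passing to logarithmic coordinates, in which every bracket of Proposition~\ref{6param} becomes \emph{constant}, and then to localize the resulting computation at the individual internal vertices.

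First I would set $\xi^i_v=\log x^i_v$ and, for an edge $e$ joining the flag $(v,e)$ labelled $x^i_v$ to the flag $(u,e)$ labelled $x^j_u$, write $\omega_e=\log w_e=\xi^i_v+\xi^j_u$ by~(\ref{connect}). In these coordinates (\ref{6parw})--(\ref{6parb}) read $\{\xi^i_v,\xi^j_v\}=\alpha_{ij}$ at white vertices and $\{\xi^i_v,\xi^j_v\}=\beta_{ij}$ at black ones, so that $\{\cdot,\cdot\}_\RR$ and its pushforward $\Poi_N$ are \emph{constant} bivectors; in particular they automatically satisfy the Jacobi identity. Both the weight map and the gauge projection $y\colon\EE_N\to\FF_N$ are linear here, and $\GG$ acts by the translations $\delta_v$ (one per internal vertex $v$) given on log-edge coordinates by $\delta_v(\omega_e)=+1$ if $v$ is the head of $e$, $\delta_v(\omega_e)=-1$ if $v$ is its tail, and $0$ otherwise. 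Since a constant bivector is invariant under every translation, it is in particular invariant along each $\delta_v$; hence it is projectable, its pushforward along $y$ is a well-defined constant---therefore Poisson---bracket $\{\cdot,\cdot\}_{\FF_N}$, and $y$ becomes a Poisson map. This settles well-definedness.

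Next I would localize. It suffices to evaluate $\{\cdot,\cdot\}_{\FF_N}$ on a generating set, namely the logarithms of the face and trail weights; by Section~\ref{cohomol} these are exactly the gauge-invariant linear functionals $\ell$ on log-edge space, i.e.\ those annihilating $\Span\{\delta_v\}$. For two such functionals $\ell,\ell'$ one pulls them back along the weight map to the flag space and evaluates $\{\cdot,\cdot\}_\RR$; since the latter is block-diagonal over the vertices, the answer is a sum of vertex-local contributions. At an internal vertex $v$ with incident edges $e_1,e_2,e_3$ carrying the flags $1,2,3$, let $a_i$ and $b_i$ be the coefficients of $e_i$ in $\ell$ and $\ell'$; the contribution of $v$ is then
\[
B_v(a,b)=\sum_{i<j}\alpha_{ij}(a_ib_j-a_jb_i)
\]
at a white vertex, and the same expression with $\beta_{ij}$ at a black one. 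The crux is a short piece of linear algebra showing that $B_v$ collapses to a single parameter. Gauge invariance of $\ell,\ell'$ is exactly $\sum_i s_i a_i=\sum_i s_i b_i=0$, where $s_i=+1$ if $v$ is the head of $e_i$ and $s_i=-1$ if $v$ is its tail; thus $a=(a_i)$ and $b=(b_i)$ lie in the plane $s^\perp\subset\R^3$. Writing the antisymmetric form as $B_v(a,b)=\langle c,a\times b\rangle$ with $c=(\alpha_{23},-\alpha_{13},\alpha_{12})$ and using that $a\times b$ is parallel to $s$ whenever $a,b\in s^\perp$, one gets
\[
B_v(a,b)=\frac{\langle c,s\rangle}{|s|^2}\,\det[a,b,s].
\]
The determinant is independent of the $\alpha_{ij}$, while $\langle c,s\rangle=\alpha_{23}s_1-\alpha_{13}s_2+\alpha_{12}s_3$. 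A white vertex has one incoming and two outgoing edges, so (with the labelling of Figure~\ref{fig:trefoils}) $s=(1,-1,-1)$ and $\langle c,s\rangle=\alpha_{23}+\alpha_{13}-\alpha_{12}=\alpha$; a black vertex gives, in the same way, a multiple of $\beta=\beta_{23}+\beta_{13}-\beta_{12}$. Summing over all internal vertices shows that $\{\cdot,\cdot\}_{\FF_N}$ depends on the six parameters only through $\alpha$ and $\beta$, exactly as in~(\ref{cond}).

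The main obstacle I anticipate is the sign bookkeeping: pinning down the head/tail signs $s_i$ and the orientation of $\det[a,b,s]$ dictated by Figure~\ref{fig:trefoils}, and checking that the collapse is uniform across face--face, face--trail and trail--trail brackets, including faces that meet $v$ in more than one corner. The reassuring point is that no geometric detail of how a face sits at $v$ is ever needed: the single scalar identity $\sum_i s_i a_i=0$, valid for \emph{every} gauge-invariant function, already forces each local profile into $s^\perp$ and triggers the one-parameter collapse.
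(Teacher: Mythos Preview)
Your argument is correct, and it is a genuinely different route from the paper's. The paper proceeds by a direct case analysis: for two face weights $y_f,y_{f'}$ it picks out the flags common to $\partial f$ and $\partial f'$, writes each face weight explicitly in terms of the three flag variables $x_v^1,x_v^2,x_v^3$ at the shared vertex, and computes the contribution of each case (positive/negative, white/black) by hand, obtaining $\pm\alpha y_f y_{f'}$ or $\pm\beta y_f y_{f'}$; the face--trail case is then dismissed as ``similar''. Your approach instead linearizes in log coordinates, notes that the bivector is constant and hence automatically projectable along the linear gauge quotient, and then collapses each vertex contribution by the single identity $B_v(a,b)=\langle c,s\rangle\,|s|^{-2}\det[a,b,s]$ on $s^\perp$, valid for \emph{any} gauge-invariant functional. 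What you gain is uniformity: face--face, face--trail and trail--trail brackets are handled simultaneously with no further casework, and the well-definedness of the pushforward is made explicit rather than implicit. What the paper's computation buys is concreteness: it produces the actual coefficient ($\pm\alpha$ or $\pm\beta$) attached to each common flag, which is the form used downstream. One small check you left implicit: for a black vertex the labelling of Figure~\ref{fig:trefoils} gives $s=(-1,1,1)$, so $\langle c',s\rangle=-\beta$; this is still the required ``multiple of $\beta$'', but it is worth recording that the sign flips relative to the white case.
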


%given by
%\begin{align*}\label{2param}
%&\{y_f,y_{f'}\}_{\FF_N}=\left(\sum_{e^*: f\to f'} w^*(e^*)-
%\sum_{e^*: f'\to f} w^*(e^*)\right)y_fy_{f'},\\
%&\{y_f,y_{t}\}_{\FF_N}=\left(\sum_g\sum_{t^8\ni e^*: f\to g} w^*(e^*)-
%\sum_g\sum_{t^*\ni e^*: g\to f} w^*(e^*)\right)y_fy_{t}
%\end{align*}
%with $\alpha$ and $\beta$ satisfying~(\ref{cond}).

\begin{proof}
In what follows it will be convenient to assume that boundary 
vertices are colored in {\it gray}.
Let $e=(u,v)$ be a directed edge. We say that the flag $(u,e)$ is {\it positive\/}, and the flag $(v,e)$ is 
{\it negative}. The color of a flag is defined as the color of the vertex participating in the flag.

Consider first a bracket of two face weights.
Let $f$ and $f'$ be two faces of $N$. We say that a flag $(v,e)$ is {\it common\/} to $f$ and $f'$ if both $v$ 
and $e$ belong to $\partial f\cap\partial f'$. Clearly, the bracket  $\{y_f,y_{f'}\}_{\FF_N}$ can be calculated 
as the sum of the contributions of all flags common to $f$ and $f'$.

Assume that $(v,e)$ is a positive white flag common to $f$ and $f'$, see Fig.~\ref{fig:comflag}. 
Then $y_f=\dfrac{x_v^3}{x_v^2}\bar y_f$ and $y_{f'}=x_v^1x_v^2\bar y_{f'}$, where $x_v^i$ are the weights of flags 
involving $v$ and $\{x_v^i,\bar y_f\}_R=\{x_v^i,\bar y_{f'}\}_R=0$. Therefore, by~(\ref{6parw}), 
the contribution of $(v,e)$ equals $(\alpha_{12}-\alpha_{13}-\alpha_{23})y_fy_{f'}$, 
which by~(\ref{cond}) equals $-\alpha y_fy_{f'}$. 

Assume now that $(v,e)$ is a negative white flag common to $f$ and $f'$, see Fig.~\ref{fig:comflag}. In this case 
$y_f=\dfrac1{x_v^1x_v^3}\bar y_f$ and $y_{f'}=x_v^1x_v^2\bar y_{f'}$, so  the contribution of $(v,e)$ equals 
$(\alpha_{13}+\alpha_{23}-\alpha_{12})y_fy_{f'}=\alpha y_fy_{f'}$.

\begin{figure}[ht]
\begin{center}
\includegraphics[height=2.5cm]{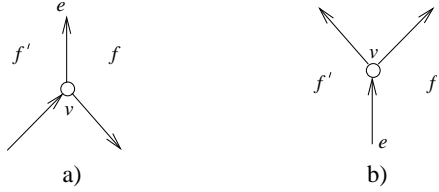}
\caption{Contribution of a white common flag: a) positive flag; b) negative flag} \label{fig:comflag}
\end{center}
\end{figure}

In a similar way one proves that the contribution of a positive black flag common to $f$ and $f'$ equals $-\beta y_fy_{f'}$, 
and the contribution of a negative black flag common to $f$ and $f'$ equals $\beta y_fy_{f'}$. 
Finally, the contributions of positive and negative gray flags are clearly equal to zero.
Therefore, the brackets $\{y_f,y_{f'}\}_{\FF_N}$ form a 2-parametric family with parameters $\alpha$ and $\beta$ 
defined by~(\ref{cond}).
 
The case of a bracket $\{y_f,y_{t}\}_{\FF_N}$ is treated in a similar way.
\end{proof}

\subsection{Induced Poisson structures on  
$\Rat_{k,m}$}\label{PSonrat}
Fix an arbitrary pair of partitions $I_1\cup J_1=[1,n_1]$, $I_1\cap J_1=\varnothing$, $I_2\cup J_2=[n_1+1,n]$, 
$I_2\cap J_2=\varnothing$, 
and denote $k=|I_1|+|I_2|$, $m=n-k=|J_1|+|J_2|$.
Let $\Net_{I_1,J_1,I_2,J_2}$ stand for the set of all perfect planar
networks in an annulus with the sources $b_i$, $i\in I_1$ and sinks $b_j$, $j\in J_1$, on the outer circle, 
sources $b_i$, $i\in I_2$ and sinks $b_j$, $j\in J_2$, on the inner circle, and edge weights $w_e$ defined by~(\ref{connect}).
We assume that the space of edge weights $\EE_N=\R^{|E|}$ is
equipped with the Poisson bracket $\{\cdot,\cdot\}_N$ obtained as the pushforward of the 6-parametric 
family $\{\cdot,\cdot\}_R$ described in
Proposition~\ref{6param}. 

\begin{theorem}\label{PSR} 
There exists a $2$-parametric family of Poisson brackets $\Poi_{I_1,J_1,I_2,J_2}$ on
$\Rat_{k,m}$ with the following property: for any choice of
parameters $\alpha_{ij}$, $\beta_{ij}$
in~(\ref{6parw}), (\ref{6parb}) this family contains a unique
Poisson bracket on $\Rat_{k,m}$ such that for any network
$N\in\Net_{I_1,J_1,I_2,J_2}$ the map $M_N\:(\R\setminus 0)^{|E|}\to \Rat_{k,m}$ is Poisson.
\end{theorem}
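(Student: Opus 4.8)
The plan is to push the $6$-parameter family $\Poi_N$ on $\EE_N$ forward to $\Rat_{k,m}$ in two stages, exploiting the factorization $M_N=M_N^\FF\circ y$ through the space of face and trail weights. The first stage is already available: by Theorem~\ref{PSviay} the pushforward of $\Poi_N$ along $y\colon\EE_N\to\FF_N$ is the $2$-parameter bracket $\{\cdot,\cdot\}_{\FF_N}$ depending only on $\alpha=\alpha_{23}+\alpha_{13}-\alpha_{12}$ and $\beta=\beta_{23}+\beta_{13}-\beta_{12}$. Hence any bracket on $\Rat_{k,m}$ obtained by a further pushforward along $M_N^\FF$ automatically depends on the six original parameters only through $(\alpha,\beta)$, which is exactly the $2$-parameter dependence asserted. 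So it suffices to show that $\{\cdot,\cdot\}_{\FF_N}$ pushes forward along $M_N^\FF$ to a well-defined Poisson bracket on $\Rat_{k,m}$ that is independent of the choice of $N\in\Net_{I_1,J_1,I_2,J_2}$.

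For uniqueness I would argue directly from the realization theorem (Theorem~\ref{realiz}). A Poisson bracket is a bivector, so the requirement that $M_N$ be Poisson reads $\{F,G\}(M_N(x))=\{F\circ M_N,G\circ M_N\}_N(x)$, and the right-hand side depends on $F,G$ only through their differentials at $M_N(x)$. Given any $A\in\Rat_{k,m}$, the realization theorem supplies a network $N\in\Net_{I_1,J_1,I_2,J_2}$ and a weight point $x$ with $M_N(x)=A$; hence the value $\{F,G\}(A)$ is forced. Two brackets satisfying the hypotheses therefore agree at every $A$, which gives uniqueness.

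For existence the substantive step is an explicit combinatorial computation of $\{M(i,j)(\lambda),M(i',j')(\mu)\}_{\FF_N}$ for a single network, carried out by summing the contributions of flags common to the faces and trails carrying the two families of paths, exactly as in the proof of Theorem~\ref{PSviay}. The goal is to show that this bracket collapses into a \emph{universal} expression: a fixed quadratic form in the matrix entries $M(\cdot,\cdot)$ evaluated at the spectral parameters $\lambda$ and $\mu$, with coefficients built only from $\alpha,\beta$, the two parameters, and the combinatorial type $(I_1,J_1,I_2,J_2)$ recording which circle each source and sink lies on, but with \emph{no} dependence on the internal structure of $N$. This is the content of Propositions~\ref{PSRE} and~\ref{PSRE2}. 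Because the answer is phrased entirely in terms of the entries of the boundary measurement matrix, it simultaneously shows that the pushforward along $M_N^\FF$ is well defined (the bracket of pulled-back coordinate functions is again a pulled-back function, so it takes the same value on all preimages even when $M_N^\FF$ is only finite-to-one) and that the formula is identical for every network; together with the realization theorem this defines a single bivector $\Poi_{I_1,J_1,I_2,J_2}$ on all of $\Rat_{k,m}$.

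Finally I would check the Poisson axioms for $\Poi_{I_1,J_1,I_2,J_2}$. Antisymmetry and the Leibniz rule are immediate from the explicit expression. For the Jacobi identity, note that by construction each $M_N$ is Poisson, so the identity $\{G,H\}\circ M_N=\{G\circ M_N,H\circ M_N\}_N$ holds; iterating it, the Jacobiator of three coordinate functions on $\Rat_{k,m}$ pulls back along $M_N$ to the Jacobiator of their pullbacks, which vanishes because $\Poi_N$ (and hence $\{\cdot,\cdot\}_{\FF_N}$) is Poisson. Since the images $M_N(\EE_N)$, taken over all $N\in\Net_{I_1,J_1,I_2,J_2}$, cover $\Rat_{k,m}$ by the realization theorem, the Jacobiator vanishes identically. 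The main obstacle is the combinatorial heart of the existence step: proving that the flag-by-flag computation of $\{M(i,j)(\lambda),M(i',j')(\mu)\}_{\FF_N}$ really does collapse to a network-independent quadratic expression in the matrix entries. The bookkeeping is delicate because the contributions depend on how the two paths share edges and faces, on how often each path crosses the cut (the powers of $\lambda$ and $\mu$), and on the relative cyclic position of the endpoints $b_i,b_{i'},b_j,b_{j'}$ on the two circles; it is precisely this last dependence that produces the four-block structure of $M_N$ and makes the answer sensitive to the mutual location of sources and sinks.
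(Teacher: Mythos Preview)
Your architecture matches the paper's: reduce the six parameters to two via the factorization through $\FF_N$ and Theorem~\ref{PSviay}, establish network-independent closed formulas for $\{M_{pq}(t),M_{\bar p\bar q}(s)\}$ (this is exactly the content of Propositions~\ref{PSRE} and~\ref{PSRE2}), and then invoke the realization theorem both to extend the bracket to all of $\Rat_{k,m}$ and to obtain Jacobi by pullback---the paper explicitly remarks that this is how it circumvents a direct verification of Jacobi.

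The one substantive divergence is in how Propositions~\ref{PSRE} and~\ref{PSRE2} are to be proved. You propose a direct flag-by-flag computation ``exactly as in the proof of Theorem~\ref{PSviay}''. But that argument computes brackets of face weights, which are finite Laurent monomials in the edge weights; a boundary measurement is an infinite sum of path weights, so the analogous direct attack means organizing a double sum over pairs of paths, each pair contributing a sum over shared internal vertices, and showing this collapses to the claimed quadratic expressions in the $M_{pq}$. The paper takes a different and more tractable route: it works with the specific two-parameter representative~(\ref{2parw}),~(\ref{2parb}) on edge weights and proves the formulas by \emph{induction on the number of internal vertices}, using a vertex-splitting construction (Lemma~\ref{recalcan}) that expresses the boundary measurements of $N$ rationally in terms of those of a strictly smaller network $\hN$. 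To keep the case analysis finite it also exploits three symmetries---moving a base point of the cut, reversing the direction of the cut, and reversing the orientation of the boundary circles---to reduce an arbitrary configuration $(i_p,j_q,i_{\bar p},j_{\bar q})$ to the short list in the statements. This inductive mechanism, together with the symmetry reductions, is the technical engine that your sketch leaves unspecified.
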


\proof
First of all, we use the factorization $M_N=M_N^{\FF}\circ y$ to decrease the number of parameters. 
By Theorem~\ref{PSviay}, it suffices to consider the 2-parametric
family
\begin{equation}\label{2parw}
\{\bar x^2_v,\bar x^3_v\}_v=\alpha \bar x^2_v\bar x^3_v, \quad
\{\bar x^1_v,\bar x^2_v\}_v=\{\bar x^1_v,\bar x^3_v\}_v=0
\end{equation}
and
\begin{equation}\label{2parb}
\{\bar x^2_v,\bar x^3_v\}_v=\beta \bar x^2_v\bar x^3_v, \quad
\{\bar x^1_v,\bar x^2_v\}_v=\{\bar x^1_v,\bar x^3_v\}_v=0
\end{equation}
with $\alpha$ and $\beta$ defined by~(\ref{cond}), 
instead of the 6-parametric
family~(\ref{6parw}),~(\ref{6parb}) 

The rest of the proof consists of two major steps. First, we compute the induced Poisson bracket on the image of the boundary measurement map. More exactly, we show that the bracket $\Poi_N$ of any pair of pullbacks of coordinate functions on the image can be expressed in terms of pullbacks of other coordinate functions, and that for fixed $I_1,J_1,I_2,J_2$ these expressions do not depend on the network $N\in\Net_{I_1,J_1,I_2,J_2}$.
Second, we prove that any rational matrix function belongs to the image of the boundary measurement map (for a sufficiently large $N\in\Net_{I_1,J_1,I_2,J_2}$), and therefore $\Poi_N$ induces $\Poi_{I_1,J_1,I_2,J_2}$ on $\Rat_{k,m}$. This approach allows us to circumvent technical difficulties one encounters when attempting to check the Jacobi identity in the image in a straightforward way.

To compute the induced Poisson bracket on the image of the boundary measurement map, we consider coordinate functions $\val_t\:f\mapsto f(t)$ that assign to any $f\in \Rat_{1,1}$ its value at point $t$. Given a pair of two matrix entries, it suffices to calculate the bracket between arbitrary pair of functions $\val_t$ and $\val_s$ defined on two copies of  $\Rat_{1,1}$ representing these entries.
Since the pullback of $\val_t$ is the corresponding component of $M_N(t)$, we have to deal with expressions of the form $\{M_{pq}(t),M_{\bar p\bar q}(s)\}_N$.

%In what follows we write $f(t)$ instead of $\val_t(f)$.

%one has to pick coordinate functions on the space $\Rat_{1,1}$ of rational %functions and, for any choice of two matrix entries, to calculate the bracket %%between 
%for any pair of coordinate functions defined on two copies of  $\Rat_{1,1}$ %representing the chosen matrix entries. A convenient choice 

%For example, if one picks the coefficients of the numerator and the denominator as %coordinate functions on $\Rat_{1,1}$, and the chosen entries are $(p,q)$ and %$(\bar p,\bar q)$, then one has to calculate the bracket 
%%between 
%for any pair of coefficients of $M_{pq}$ and $M_{\bar p\bar q}$. 

%A convenient way to organize such calculations independently of the choice of %coordinate functions is to introduce %two auxiliary variables $t$ and $s$ 
%an additional auxiliary variable $t$ (interpreted as the variable in the second %copy of $\Rat_{1,1}$) and to calculate the bracket 
%%between 
%of rational functions $M_{pq}(\lambda)$ and $M_{\bar p\bar q}(t)$. The brackets 
%%between 
%for the coordinate functions on the corresponding two copies of $\Rat_{1,1}$ can %be restored from these expressions. 

To avoid overcomplicated formulas we  consider separately two particular representatives of the family~(\ref{2parw}),~(\ref{2parb}): 1) $\alpha=-\beta=1$, and 2) $\alpha=\beta=1$. Any member of the family can be represented as a linear combination of the above two.

Denote by $\Poi^1_N$ the member of the 2-parametric
family~(\ref{2parw}),~(\ref{2parb}) corresponding to the case 
%on the image of the boundary measurement map 
$\alpha=-\beta=1$. Besides, define $\sigma_=(i,j,i',j')=\s(i'-i)-\s(j'-j)$; clearly, $\sigma_=(i,j,i',j')$ is closely related to $s_=(i,j,i',j')$ defined and studied in~\cite{GSV3}. 
The bracket induced by $\Poi^1_N$ on the image of the boundary measurement map is completely described by the following statement.

\begin{proposition}\label{PSRE}
{\rm (i)} Let $i_p, i_{\bar p} \in [1,n_1]$ and $1\leq \max\{i_p,i_{\bar p}\}<j_{\bar q}<j_q\leq n$, then
\begin{equation}\label{psre1}
\{M_{pq}(t),M_{\bar p\bar q}(s)\}^1_N=
\sigma_=(i_p,j_q,i_{\bar p},j_{\bar q})M_{p\bar q}(s)M_{\bar p q}(t)
-\frac2{t-s}\Phi_{pq}^{\bar p \bar q}(t,s),
\end{equation}
where 
\[
\Phi_{pq}^{\bar p \bar q}(t,s)=\left\{\begin{array}{ll}
\big(M_{p\bar q}(t)-M_{p\bar q}(s)\big)
\big(s M_{\bar p q}(t)-t M_{\bar p q}(s)\big),
&\qquad j_{\bar q}<j_q\leq n_1,\\
s M_{\bar p q}(t)\big(M_{p\bar q}(t)-M_{p\bar q}(s)\big), 
&\qquad j_{\bar q}\leq n_1< j_q,\\
s\big(M_{p\bar q}(t)M_{\bar p q}(s)-M_{p\bar q}(s)M_{\bar p q}(t)\big),
&\qquad n_1< j_{\bar q}<j_q.
\end{array}\right.
\]

{\rm (ii)} Let $j_{\bar q},j_q \in [n_1+1,n]$ and 
$1\leq i_p<i_{\bar p}<\min\{j_{\bar q},j_q\}\leq n$, then
\begin{equation}\label{psre2}
\{M_{pq}(t),M_{\bar p\bar q}(s)\}^1_N=
\sigma_=(i_p,j_q,i_{\bar p},j_{\bar q})M_{p\bar q}(t)M_{\bar p q}(s)
-\frac2{t-s}\Psi_{pq}^{\bar p \bar q}(t,s),
\end{equation}
where 
\[
\Psi_{pq}^{\bar p \bar q}(t,s)=\left\{\begin{array}{ll}
t\big(M_{p\bar q}(t)M_{\bar p q}(s)-M_{p\bar q}(s)M_{\bar p q}(t)\big),
&\qquad i_p<i_{\bar p}\leq n_1,\\
-t M_{p \bar q}(t)\big(M_{\bar p q}(t)-M_{\bar p q}(s)\big),
&\qquad i_p\leq n_1<i_{\bar p},\\
-\big(t M_{p\bar q}(t)-s M_{p\bar q}(s)\big)
\big(M_{\bar p q}(t)-M_{\bar p q}(s)\big),
&\qquad n_1<i_p<i_{\bar p}.
\end{array}\right.
\]

{\rm (iii)} Let $1\leq i_p=i_{\bar p}<j_q=j_{\bar q}\leq n$, then
\begin{align}\label{psre3}
\{M_{pq}(t),&M_{p q}(s)\}^1_N\\
&=\left\{\begin{array}{ll}
-\frac{2}{t-s} 
\big(M_{pq}(t)-M_{pq}(s)\big)
\big(s M_{p q}(t)-t M_{p q}(s)\big),
&\qquad i_p<j_q\leq n_1,\\
0,
&\qquad i_p\leq n_1<j_q.
\end{array}\right.\nonumber
\end{align}

{\rm (iv)} Let $1\leq i_p<\min\{i_{\bar p},j_q,j_{\bar q}\}$, then
\begin{align}\label{psre4}
\{M_{pq}(t),&M_{\bar p\bar q}(s)\}^1_N\\
&=\left\{\begin{array}{ll}
\frac{2t}{t-s}
\big(M_{p\bar q}(t)-M_{p\bar q}(s)\big)
\big(M_{\bar p q}(t)-M_{\bar p q}(s)\big),
&\qquad j_{\bar q}<i_{\bar p}<j_q\leq n_1,\\
-\frac{2t}{t-s}
\big(M_{p\bar q}(t)M_{\bar p q}(t)-M_{p\bar q}(s)M_{\bar p q}(s)\big),
&\qquad j_q\leq n_1<i_{\bar p}<j_{\bar q},\\
0,
&\qquad j_{\bar q}\leq n_1<i_{\bar p}<j_q.
\end{array}\right.\nonumber
\end{align}
\end{proposition}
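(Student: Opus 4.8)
The plan is to expand both boundary measurements into sums of path weights and to resolve the resulting brackets of path weights by an uncrossing procedure, in the spirit of the disk computation of~\cite{GSV3}. Writing $M_{pq}(t)=\sum_P w_P(t)$ over all paths $P$ from $b_{i_p}$ to $b_{j_q}$, with weights taken from~\eqref{weightviapath} at $\lambda=t$, and likewise $M_{\bar p\bar q}(s)=\sum_{P'}w_{P'}(s)$, bilinearity and the Leibniz rule give
\[
\{M_{pq}(t),M_{\bar p\bar q}(s)\}^1_N=\sum_{P,P'}\{w_P(t),w_{P'}(s)\}^1_N .
\]
Each $w_P$ is, up to sign and a power of $\lambda$, a monomial in the edge weights; since the bracket $\{\cdot,\cdot\}_N$ is the pushforward of the log-canonical vertex brackets~\eqref{2parw},~\eqref{2parb}, we obtain $\{w_P(t),w_{P'}(s)\}^1_N=\kappa(P,P')\,w_P(t)w_{P'}(s)$, where $\kappa(P,P')$ is a sum of contributions localized at the vertices shared by $P$ and $P'$.

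First I would make $\kappa$ explicit. Since only the brackets $\{\bar x^2_v,\bar x^3_v\}_v$ are nonzero, a shared vertex contributes exactly when the two paths \emph{interact} there: they enter a white vertex along its single incoming edge and leave by different outgoing edges, or they enter a black vertex along different incoming edges and leave by its single outgoing edge. With $\alpha=-\beta=1$ each interaction contributes $\pm1$, the sign recording the local orientation. Thus $\kappa(P,P')$ is a signed count of interactions, and each interaction is the site of an \emph{uncrossing} that replaces $(P,P')$ by a pair $(\hat P,\hat P')$ of paths running from $b_{i_p}$ to $b_{j_{\bar q}}$ and from $b_{i_{\bar p}}$ to $b_{j_q}$. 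The sign produced by the uncrossing is controlled by the concordance number exactly as in~\eqref{offcycle}, and the weight $w_P(t)w_{P'}(s)$ is carried to $w_{\hat P}w_{\hat P'}$ up to a discrepancy in the power of $\lambda$ equal to the change of the algebraic intersection number with the cut $\rho$.

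The core of the argument, and the main obstacle, is to organize this resolution over all pairs $(P,P')$ and all windings around the annulus. Collecting the uncrossed pairs reproduces the swapped products $M_{p\bar q}$ and $M_{\bar p q}$, but, unlike in the disk case of~\cite{GSV3}, the cut forces the two factors to be evaluated at possibly different points among $t,s$ and weighted by extra powers of $\lambda$; because uncrossing along successive turns of a shared subpath shifts the relative $\lambda$-degree by one, the sum over windings is geometric and collapses to a multiple of $1/(t-s)$. Carrying out this resummation produces the leading term $\sigma_=(i_p,j_q,i_{\bar p},j_{\bar q})M_{p\bar q}M_{\bar p q}$, whose coefficient is fixed by the ordering hypotheses of~(i)--(iv) exactly as $s_=$ was in~\cite{GSV3}, together with the rational correction displayed in the statement, whose overall factor $2$ reflects the combination of the white contribution $\alpha=1$ and the black contribution $\beta=-1$. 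The three sub-cases in $\Phi$ and $\Psi$ correspond to whether the two sinks (respectively sources) lie on the same bounding circle or on different ones, equivalently whether the indices lie on the same side of $n_1$; this is precisely the quantity $\alpha(\hat P)\in\{0,1\}$ of~\eqref{weightviapath}, and it dictates the prefactors $s$ and $t$ and the vanishing in the mixed cases.

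Finally, network-independence is automatic, since every term is expressed through the matrix entries $M_{pq}$ themselves, which agree for all $N\in\Net_{I_1,J_1,I_2,J_2}$ realizing a given rational matrix function. Parts~(iii) and~(iv) are specializations of the same calculation: in~(iii) one brackets a single entry against itself, so the swapped products degenerate to $M_{pq}$, the coefficient $\sigma_=$ vanishes, and only the winding correction survives (and this too vanishes when the source and sink lie on different circles); in~(iv) the ordering $i_p<\min\{i_{\bar p},j_q,j_{\bar q}\}$ is fed into the same uncrossing analysis, where again the leading term is absent and only the $1/(t-s)$ correction remains. The hardest part throughout is the $\lambda$-bookkeeping across the cut: tracking how intersection numbers change under uncrossing and assembling the winding series into the stated closed forms, with the correct prefactors and sub-case structure.
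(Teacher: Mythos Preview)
Your approach differs from the paper's and, as written, has a genuine gap at precisely the point you identify as the main obstacle. The paper does not expand the boundary measurements as infinite sums over paths and then uncross; instead it argues by induction on the number of internal vertices of $N$, using a splitting lemma (Lemma~\ref{recalcan}) that expresses the boundary measurements of $N$ rationally in terms of those of a network $\hN$ with one fewer internal vertex. One then checks that the claimed identities~\eqref{psre1}--\eqref{psre4} for $\hN$ imply those for $N$, working throughout with rational functions rather than formal series. (The paper also first explains, via moving the base point of the cut, reversing the cut, and reversing the boundary orientation, why the quadruples listed in (i)--(iv) suffice; your proposal does not address this reduction.)

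The gap in your argument is the passage from the uncrossed pair $(\hat P,\hat P')$ back to the products $M_{p\bar q}M_{\bar pq}$. When you uncross $(P,P')$ at an interaction vertex $v$, the path $\hat P$ is built from a segment of $P$ (carrying $\lambda=t$) concatenated with a segment of $P'$ (carrying $\lambda=s$); hence $w_P(t)w_{P'}(s)$ equals $w_{\hat P}(t)w_{\hat P'}(s)$ only up to a factor $(t/s)^{d(v)}$, where $d(v)$ is the algebraic number of cut-crossings between $v$ and the endpoints along the exchanged segments. To obtain the displayed formulas you would need a bijective organization of all interactions, over all pairs $(P,P')$ and all windings, that (a) groups them by the value of $d(v)$, (b) shows that for fixed $d$ the resulting sum reconstitutes a single product $M_{p\bar q}(\cdot)M_{\bar pq}(\cdot)$ evaluated at definite points, and (c) verifies that the signs from concordance numbers and from $\alpha=-\beta=1$ line up uniformly. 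Only then could one sum the geometric series in $t/s$ and identify the result with $-\frac{2}{t-s}\Phi$ or $-\frac{2}{t-s}\Psi$. None of (a)--(c) is carried out, and in particular the assertion that ``uncrossing along successive turns shifts the relative $\lambda$-degree by one'' already presupposes a canonical ordering of interactions along a shared cycle that you have not constructed. The paper's inductive route avoids all of this: at each step one brackets two explicit rational expressions from Lemma~\ref{recalcan} and checks algebraically that the formula persists.
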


\proof
Let us first make sure that relations~(\ref{psre1})--(\ref{psre4}) indeed allow to compute $\{M_{pq}(t),M_{\bar p\bar q}(s)\}^1_N$ for any $p,\bar p\in [1,k]$ and $q,\bar q\in [1,m]$. This is done by employing the following three techniques:

-- moving a base point of the cut;

-- reversing the direction of the cut;

-- reversing the orientation of boundary circles.

The first of the above techniques has been described in detail in Section~\ref{PandW2}. For example, let $1\leq j_q<i_p<i_{\bar p}<j_{\bar q}\leq n_1$. This case is not covered explicitly by relations~(\ref{psre1})--(\ref{psre4}). Consider the network $N'$ obtained from $N$ by moving the base point of the cut on the outer circle counterclockwise and interchanging it with $j_q$. In this new network one has $1\leq i_{p'}<i_{\bar p'}<j_{\bar q'}<j_q'= n_1$, so the conditions of Proposition~\ref{PSRE}(i) are satisfied and~(\ref{psre1}) yields
\begin{align*}
\{M_{p'q'}(t),M_{\bar p'\bar q'}(s)\}^1_{N'}=
2&M_{p'\bar q'}(s)M_{\bar p' q'}(t)\\
&-\frac2{t-s}\big(M_{p'\bar q'}(t)-M_{p'\bar q'}(s)\big)
\big(s M_{\bar p' q'}(t)-t M_{\bar p' q'}(s)\big).
\end{align*}
By Lemma~\ref{movecut},
\[
M_{p'q'}(t)=t M_{pq}(t),\quad
M_{\bar p'\bar q'}(t)=M_{\bar p\bar q}(t),\quad
M_{p'\bar q'}(t)=M_{p\bar q}(t),\quad
M_{\bar p' q'}(t)=t M_{\bar p' q'}(t).
\]
Finally, $\Poi_N=\Poi_{N'}$ for any pair of edge weights, so we get
\[
\{M_{pq}(t),M_{\bar p\bar q}(s)\}^1_{N}=
2M_{p\bar q}(s)M_{\bar p q}(t)
-\frac{2s}{t-s}\big(M_{p\bar q}(t)-M_{p\bar q}(s)\big)
\big(M_{\bar p q}(t)-M_{\bar p q}(s)\big).
\]

Reversing the direction of the cut transforms the initial network $N$ to a new network $N'$;  the graph $G$ remains the same, while the labeling of its boundary vertices is changed. Namely, the $n_1'=n_2$ boundary vertices lying on the inner circle are labeled $b_1,\dots,b_{n'_1}$ in the clockwise order starting from the first vertex that follows after the base point of the cut. The boundary vertices lying on the outer circle are labeled $b_{n'_1+1},\dots,b_n$ in the counterclockwise order starting from the first vertex that follows after the base point of the cut.
The transformation $N\mapsto N'$ is better visualized if the network is drawn on a cylinder, instead of an annulus. The boundary circles of a cylinder are identical, and reversing the direction of the cut simply interchanges them. Clearly, the boundary measurements in $N$ and $N'$ are related by $M_{r's'}(t)=M_{rs}(1/t)$ for any $i_r\in I$, $j_s\in J$. Besides, $\Poi_N=\Poi_{N'}$ for any pair of edge weights. Therefore, an expression for $\{M_{pq}(t),M_{\bar p\bar q}(s)\}^1_{N}$ via $M_{p\bar q}(t)$, $M_{p\bar q}(s)$, $M_{\bar p q}(t)$, $M_{\bar p q}(s)$ is transformed to the expression for
$\{M_{p'q'}(t),M_{\bar p'\bar q'}(s)\}^1_{N'}$ via
$M_{p'\bar q'}(t)$, $M_{p'\bar q'}(s)$, 
$M_{\bar p' q'}(t)$, $M_{\bar p' q'}(s)$ by the substitution  $t\mapsto 1/t$ and $s\mapsto 1/s$ in coefficients. For example, let $1\leq i_{\bar p}<j_q<j_{\bar q}\leq n_1<i_p\leq n$. This case is not covered explicitly by relations~(\ref{psre1})--(\ref{psre4}). Consider the network $N'$ obtained from $N$ by reversing the direction of the cut. In this new network one has $1\leq i_{p'}\leq n'_1< i_{\bar p'}<j_q<j_{\bar q'}\leq  n$, so the conditions of Proposition~\ref{PSRE}(ii) are satisfied and~(\ref{psre2}) yields
\[
\{M_{p'q'}(t),M_{\bar p'\bar q'}(s)\}^1_{N'}=
-2M_{p'\bar q'}(t)M_{\bar p' q'}(s)
+\frac{2t}{t-s}M_{p'\bar q'}(t)
\big(M_{\bar p' q'}(t)-M_{\bar p' q'}(s)\big).
\]
Applying the above described rule one gets 
\begin{align*}
\{M_{pq}(t),M_{\bar p\bar q}(s)\}^1_{N}&=
-2M_{p\bar q}(t)M_{\bar p q}(s)
+\frac{2t^{-1}}{t^{-1}-s^{-1}}M_{p\bar q}(t)
\big(M_{\bar p q}(t)-M_{\bar p q}(s)\big)\\
&=-2M_{p\bar q}(t)M_{\bar p q}(s)
-\frac{2s}{t-s}M_{p\bar q}(t)
\big(M_{\bar p q}(t)-M_{\bar p q}(s)\big).
\end{align*}

Finally, reversing the orientation of boundary circles also retains the graph $G$ and changes the labeling of its boundary vertices. Namely, the $n_1$ boundary vertices of $N'$ lying on the outer circle are labeled $b_1,\dots,b_{n_1}$ in the clockwise order starting from the first vertex that follows after the base point of the cut. The boundary vertices lying on the inner circle are labeled $b_{n_1+1},\dots,b_n$ in the counterclockwise order starting from the first vertex that follows after the base point of the cut.
The transformation $N\mapsto N'$ may be visualized as a mirror reflection. Clearly, the boundary measurements in $N$ and $N'$ are related by $M_{r's'}(t)=M_{rs}(1/t)$ for any $i_r\in I$, $j_s\in J$. Besides, $\Poi_N=-\Poi_{N'}$ for any pair of edge weights. Therefore, the transformation of the expressions for the brackets differs from the one for the case of cut reversal by factor $-1$.
For example, let $1\leq j_q<i_p<i_{\bar p}<j_{\bar q}\leq n_1$. This case is not covered explicitly by relations~(\ref{psre1})--(\ref{psre4}). Consider the network $N'$ obtained from $N$ by reversing the orientation of boundary circles. In this new network one has $1\leq i_{p'}< j_{\bar q'}<i_{\bar p'}<j_q\leq  n_1$, so the conditions of Proposition~\ref{PSRE}(iv) are satisfied and~(\ref{psre4}) yields
\[
\{M_{p'q'}(t),M_{\bar p'\bar q'}(s)\}^1_{N'}=
\frac{2t}{t-s}\big(M_{p'\bar q'}(t)-M_{p'\bar q'}(s)\big)
\big(M_{\bar p' q'}(t)-M_{\bar p' q'}(s)\big).
\]
Applying the above described rule one gets 
\begin{align*}
\{M_{pq}(t),M_{\bar p\bar q}(s)\}^1_{N}&=
-\frac{2t^{-1}}{t^{-1}-s^{-1}}
\big(M_{p\bar q}(t)-M_{p\bar q}(s)\big)
\big(M_{\bar p q}(t)-M_{\bar p q}(s)\big)\\
&=\frac{2s}{t-s}\big(M_{p\bar q}(t)-M_{p\bar q}(s)\big)
\big(M_{\bar p q}(t)-M_{\bar p q}(s)\big).
\end{align*}

Elementary, though tedious, consideration of all possible cases reveals that indeed any quadruple $(i_p,j_q,i_{\bar p},j_{\bar q})$ can be reduced by the above three 
transformations to one of the quadruples mentioned in the statement of Proposition~\ref{PSRE}.

It is worth to note that cases (i) and (ii) are not independent. First, they both apply if $1\leq i_p<i_{\bar p}\leq n_1< j_{\bar q}<j_q\leq n$; the  expressions prescribed by~(\ref{psre1}) and~(\ref{psre2}) are distinct, but yield the same result:
\begin{align*}
2M_{p\bar q}(s)M_{\bar p q}(t)
&-\frac{2s}{t-s}
\big(M_{p\bar q}(t)M_{\bar p q}(s)-M_{p\bar q}(s)M_{\bar p q}(t)\big)\\
&=2M_{p\bar q}(t)M_{\bar p q}(s)
-\frac{2t}{t-s}
\big(M_{p\bar q}(t)M_{\bar p q}(s)-M_{p\bar q}(s)M_{\bar p q}(t)\big).
\end{align*}
Besides, the expression for $n_1< i_p<i_{\bar p}< j_{\bar q}<j_q\leq n$ in case (ii) can be obtained from the expressions for $1\leq i_p<i_{\bar p}< j_{\bar q}<j_q\leq n_1$ in case (i) by reversing the direction of the cut. However, we think that the above presentation, though redundant, better emphasizes the underlying symmetries of the obtained expressions.

The proof of relations~(\ref{psre1})--(\ref{psre4}) is similar to the proof of Theorem~3.3 in \cite{GSV3} and is based on the induction on the number of internal vertices in $N$. The key ingredient of the proof is  
%Lemma~\ref{recalc}, which remains true for planar networks in an annulus. 
%To reduce the number of cases to be considered, it is worth to note that part (ii) %of Proposition~\ref{PSRE} follows from part (i) via the above described techniques %in all cases except for $j_{\bar q}=j_q$.
the following straightforward analog of Lemma~3.5 from \cite{GSV3}. 
                                              
Consider an arbitrary boundary vertex $b_i$ (without loss of generality we may assume that $b_i$ lies on the outer circle of the annulus) and       
suppose that the neighbor of $b_i$ is a black vertex $u$.
Denote by $u_+$ the unique vertex in
$G$ such that $(u,u_+)\in E$, and by $u_-$ the neighbor of $u$
distinct from $u_+$ and $b_i$. Create a new network $\hN$ by
deleting $b_i$ and the edge $e_0=(b_i,u)$ from $G$, splitting $u$ into
one new source $b_{i_u}$ and one new sink $b_{j_u}$ placed on the outer circle (so that either $i-1< i_u< j_u< i+1$ or $i-1< j_u<i_u< i+1$) and replacing the
edges $e_+=(u,u_+)$ and $e_-=(u_-,u)$ by new edges $\hat e_+=(b_{i_u},u_+)$ and $\hat e_-=(u_-,b_{j_u})$, see Figure~\ref{fig:blackforkan}.
%Let $\hN$ be the network obtained from $N$ by splitting a black neighbor of a boundary vertex (see Figure~\ref{fig:blackforkan}). 
We may assume without loss of generality that the cut $\rho$ in $N$ does not intersect the edge $e_0$, and hence $\rho$ remains a valid cut in $\hN$. 

\begin{figure}[ht]
\begin{center}
\includegraphics[height=8cm]{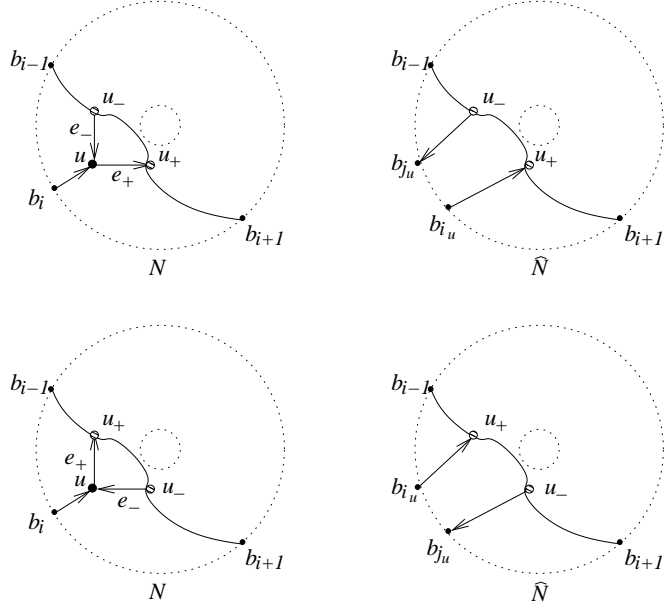}
\caption{Splitting a black vertex: cases $i-1< i_u < j_u < i+1$ 
(upper part) and $i-1 < j_u < i_u < i+1$ (lower part)}
\label{fig:blackforkan}
\end{center}
\end{figure}

\begin{lemma}\label{recalcan}
Boundary measurements in the networks $N$ and $\hN$ are related by
\begin{align*}
M({i_p},j)&=\frac{w_{e_0} w_{e_+}\hM({i_u},j)}{1+w_{e_-}w_{e_+} \hM({i_u},{j_u})},\\
M({i_{\bar p}},j)&=\hM({i_{\bar p}},j)\pm
\frac{w_{e_-}w_{e_+}\hM({i_{\bar p}},{j_u})\hM({i_u},j)}{1+ w_{e_-} w_{e_+}\hM({i_u},{j_u})},\qquad
\bar p\ne p;
\end{align*}
in the second formula above, sign $+$ corresponds to the cases 
\[
i_p-1\prec j_u\prec i_u\prec i_p+1\preceq j \prec i_{\bar p}
\qquad\text{or}\qquad
i_{\bar p}\prec j\preceq i_p-1\prec i_u\prec j_u\prec i_p+1,
\] 
and sign $-$ corresponds to the cases 
\[
i_p-1\prec i_u\prec j_u\prec i_p+1\preceq j \prec i_{\bar p}
\qquad\text{or}\qquad
i_{\bar p}\prec j\preceq i_p-1\prec j_u\prec i_u\prec i_p+1,
\]
where $\prec$ is the cyclic order $\bmod n$.
\end{lemma}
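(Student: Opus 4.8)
The plan is to follow the proof of Lemma~3.5 in \cite{GSV3}, adapting it to the annular setting by working with the modified edge weights $\bar w_e$ and with the cyclic order $\bmod n$ in place of the linear order used in the disk case. The geometric input is the local structure at $u$: since $u$ is black it has a single outgoing edge $e_+$ and exactly two incoming edges, namely $e_0=(b_i,u)$ and $e_-=(u_-,u)$; and since the orientation of $e_0$ forces $b_i=b_{i_p}$ to be a source, the edge $e_0$ is traversed at most once, and only as the first edge of a path emanating from $b_i$. Consequently every visit of a path to $u$, other than the initial one of a path starting at $b_i$, must enter through $e_-$ and leave through $e_+$.

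First I would classify the paths contributing to each boundary measurement according to their successive visits to $u$. A path from $b_i$ to $b_j$ necessarily begins with $e_0$ followed by $e_+$, then performs a (possibly empty) sequence of \emph{excursions}, each of the form $u_+\rightsquigarrow u_-\xrightarrow{e_-}u\xrightarrow{e_+}u_+$, and finally leaves $u_+$ and reaches $b_j$ without returning to $u$. Under the splitting $u\mapsto(b_{i_u},b_{j_u})$, a segment $u_+\rightsquigarrow u_-$ avoiding $u$ is exactly a path $b_{i_u}\to b_{j_u}$ in $\hN$, a terminal segment $u_+\rightsquigarrow b_j$ is a path $b_{i_u}\to b_j$, and (for the second formula) an initial segment $b_{i_{\bar p}}\rightsquigarrow u_-$ is a path $b_{i_{\bar p}}\to b_{j_u}$. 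Summing the weights of all such segments therefore reproduces $\hM(i_u,j_u)$, $\hM(i_u,j)$ and $\hM(i_{\bar p},j_u)$ respectively.

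The key step is the resummation of the excursions. Each excursion inserts a simple cycle through $u$ into the path, so by the off-cycle relation~(\ref{offcycle}) it contributes the factor $-w_{e_-}w_{e_+}\hM(i_u,j_u)$ relative to the path without it; additivity of the concordance number under this decomposition is what produces the uniform sign $-1$ per excursion. Summing the resulting geometric series over the number $k\ge0$ of excursions yields the factor $\big(1+w_{e_-}w_{e_+}\hM(i_u,j_u)\big)^{-1}$, the series converging to the stated rational function by Proposition~\ref{sfree2}. Multiplying the forced initial factor $w_{e_0}w_{e_+}$, the geometric series, and the terminal factor $\hM(i_u,j)$ gives the first formula. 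For the second formula one splits the paths from $b_{i_{\bar p}}$ into those avoiding $u$ (which sum to $\hM(i_{\bar p},j)$) and those meeting $u$ (which first travel $b_{i_{\bar p}}\rightsquigarrow u_-$, then traverse $e_-$ and $e_+$, and thereafter behave as before), giving the additional term.

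The main obstacle is the sign bookkeeping, and in particular the determination of the $\pm$ sign in the second formula. This sign records how the concordance number $c(C_P)$ of the closed curve associated with a path changes when the path is rerouted through the two boundary images $b_{i_u},b_{j_u}$ of the split vertex, and it depends on the cyclic positions of $j$, $i_u$ and $j_u$ relative to the endpoints. I would settle it by a direct comparison of $c(C_P)$ in $N$ and in $\hN$ in each of the four cyclic configurations listed in the statement, using that $i_u$ and $j_u$ are inserted between $b_{i-1}$ and $b_{i+1}$ in one of the two orders $i_u\prec j_u$ or $j_u\prec i_u$. This is precisely the point at which the cyclic order $\bmod n$ replaces the linear order of \cite{GSV3}; apart from this, the verification is routine case-checking.
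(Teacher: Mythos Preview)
Your proposal is correct and follows precisely the route the paper intends: the paper does not prove this lemma at all, merely declaring it ``the following straightforward analog of Lemma~3.5 from \cite{GSV3}'' and leaving the details to the reader, and your argument---classifying paths by their visits to $u$, invoking~(\ref{offcycle}) to get a geometric series in the excursions, and handling the $\pm$ sign by case-checking the cyclic configurations---is exactly that analog. One minor wording point: the inserted cycle through $u$ need not be simple, but~(\ref{offcycle}) applies regardless, so the resummation goes through unchanged.
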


We leave the details of the proof to the interested reader.
\endproof

Denote by $\Poi^2_N$ the member of the 2-parametric
family~(\ref{2parw}),~(\ref{2parb}) corresponding to the case 
%on the image of the boundary measurement map 
$\alpha=\beta=1$. Besides, define $\sigma_\times(i,j,i',j')=\s(i'-i)+\s(j'-j)$; clearly, $\sigma_\times(i,j,i',j')$ is closely related to $s_\times(i,j,i',j')$ defined and studied  in~\cite{GSV3}. 
The bracket induced by $\Poi^2_N$ on the image of the boundary measurement map is completely described by the following statement.

\begin{proposition}\label{PSRE2}
{\rm (i)} Let $1\leq \max\{i_p,i_{\bar p}\}<j_{\bar q}<j_q\leq n$, then
\begin{equation}\label{psre21}
\{M_{pq}(t),M_{\bar p\bar q}(s)\}^2_N=
\sigma_\times(i_p,j_q,i_{\bar p},j_{\bar q})M_{pq}(t)M_{\bar p \bar q}(s)
-2\Gamma_{pq}^{\bar p \bar q}(t,s),
%-2D(M_{pq}(t)M_{\bar p \bar q}(s)),
\end{equation}
where 
\[
\Gamma_{pq}^{\bar p \bar q}(t,s)=\left\{\begin{array}{ll}
0,
&\qquad j_q\leq n_1,\\
%-s \partial/\partial s, 
-sM_{pq}(t)M'_{\bar p \bar q}(s),
&\qquad j_{\bar q}\leq n_1< j_q,\\
%t \partial/\partial t -s \partial/\partial s,
tM'_{pq}(t)M_{\bar p \bar q}(s)-sM_{pq}(t)M'_{\bar p \bar q}(s),
&\qquad \max\{i_p,i_{\bar p}\}\leq n_1< j_{\bar q},\\
%t \partial/\partial t,
tM'_{pq}(t)M_{\bar p \bar q}(s),
&\qquad i_p\leq n_1< i_{\bar p},
\end{array}\right.
\]
and $M'_{pq}$, $M'_{\bar p \bar q}$ are the derivatives of $M_{pq}$ and $M_{\bar p \bar q}$. 

{\rm (ii)} Let $1\leq i_p<j_{\bar q}<i_{\bar p}<j_q \leq n$ and either
$j_{\bar q}\leq n_1<i_{\bar p}$ or $j_q\leq n_1$, then
\begin{equation}\label{psre22}
\{M_{pq}(t),M_{\bar p\bar q}(s)\}^2_N=0.
\end{equation}

{\rm (iii)} Let $1\leq i_p<j_q< i_{\bar p}<j_{\bar q}\leq n$ and either
$j_{q}\leq n_1<i_{\bar p}$ or $j_{\bar q}\leq n_1$, then
\begin{equation}\label{psre23}
\{M_{pq}(t),M_{\bar p\bar q}(s)\}^2_N=0.
\end{equation}
\end{proposition}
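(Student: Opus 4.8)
The plan is to reproduce, for the case $\alpha=\beta=1$, the two-stage argument already used to establish Proposition~\ref{PSRE}. In the first stage I would show that relations~(\ref{psre21})--(\ref{psre23}) suffice to compute $\{M_{pq}(t),M_{\bar p\bar q}(s)\}^2_N$ for every quadruple $(i_p,j_q,i_{\bar p},j_{\bar q})$, by reducing an arbitrary configuration to one of cases (i)--(iii) through the three operations already introduced: moving a base point of the cut (Proposition~\ref{movecut}), reversing the direction of the cut, and reversing the orientation of the boundary circles. Under the first two operations $\Poi^2_N$ is preserved and under the third it is negated, exactly as for $\Poi^1_N$; the combinatorial symbol transforms through $\sigma_\times(i,j,i',j')=\s(i'-i)+\s(j'-j)$. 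Because $\sigma_\times$ is invariant under the simultaneous interchange of the roles of sources and sinks, the list of independent base cases is shorter than for $\sigma_=$, which is why Proposition~\ref{PSRE2} has only three cases; I would verify by the same elementary but tedious enumeration that every configuration reduces to one of them.

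The second stage is an induction on the number of internal vertices of $N$, following the proof of~(\ref{psre1})--(\ref{psre4}) verbatim in structure. The base case is a network in which every path is a single edge, where the bracket is read off directly from the flag contributions computed in the proof of Theorem~\ref{PSviay} specialized to $\alpha=\beta=1$. For the inductive step I would split off a black or white vertex adjacent to a boundary vertex as in Lemma~\ref{recalcan}, express the boundary measurements of $N$ as the rational functions of the measurements of $\hN$ prescribed there, substitute these into the formula already proved for $\hN$, and collect terms. Cases (ii) and (iii) I expect to be the easy ones: in those interleaved configurations no flag is common to the relevant faces and there is no trail interaction, so every contribution cancels and the bracket vanishes.

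The feature that genuinely distinguishes this proof from that of Proposition~\ref{PSRE} is the derivative terms $tM'_{pq}(t)$ and $sM'_{\bar p\bar q}(s)$ in $\Gamma$, and I would account for them structurally rather than by brute force. When a boundary measurement is written as a function on $\FF_N$, its entire dependence on the spectral parameter is carried by the weight of a connecting trail, since only paths crossing the cut acquire powers of $\lambda$; face weights are $\lambda$-free because the boundary of a bounded face has zero algebraic intersection with the cut. In the case $\alpha=\beta=1$ the trail weight has nonzero bracket with the face weights by Theorem~\ref{PSviay}, and since the dependence of $M_{pq}$ on $\lambda$ is locked to its dependence on the trail weight $y_t$, the operator $y_t\partial_{y_t}$ arising from this bracket acts on $M_{pq}$ as the Euler operator $\lambda\partial_\lambda$; evaluated at $\lambda=t$ this produces exactly $tM'_{pq}(t)$. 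This is precisely why the derivative terms in $\Gamma$ are switched on only in those $n_1$-cases in which a source and its paired sink lie on different circles — so that a path must cross the cut — and why $\Gamma=0$ when all relevant sinks lie on the outer circle ($j_q\le n_1$), where there is no $\lambda$-dependence at all.

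The main obstacle will be the bookkeeping that this derivative term forces upon both stages. In the reduction stage, whenever one of the three operations multiplies a measurement by $\lambda^{\pm1}$ (as happens through the matrices $\Lambda_\pm$ of Proposition~\ref{movecut}) or substitutes $t\mapsto 1/t$, the derivative terms mix with the product terms through the product and chain rules, so the transformation of an expression containing $M'$ is no longer a mere relabeling. In the inductive stage, differentiating and bracketing the rational recursion of Lemma~\ref{recalcan} generates many terms that must be shown to reassemble into the compact form $\sigma_\times M_{pq}(t)M_{\bar p\bar q}(s)-2\Gamma$ with the correct case split relative to $n_1$. Making the identification of the trail-weight bracket with $\lambda\partial_\lambda$ fully rigorous, and propagating it consistently through the vertex-splitting recursion, is where essentially all the work lies; once it is in place the remaining manipulations are routine and parallel those of~\cite{GSV3}.
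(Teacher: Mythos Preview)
Your core plan is correct and is exactly what the paper does: its entire proof of Proposition~\ref{PSRE2} reads ``The proof is similar to the proof of Proposition~\ref{PSRE}. We leave the details to the interested reader.'' Your two stages---reduction of an arbitrary quadruple to the listed cases via moving the base point, reversing the cut, and reversing the boundary orientation, followed by induction on the number of internal vertices through Lemma~\ref{recalcan}---are precisely the intended argument.

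One caution about your trail-weight heuristic. The identification of $y_t\partial_{y_t}$ with $\lambda\partial_\lambda$ on boundary measurements is a genuine and useful observation: rescaling $\lambda$ is equivalent, modulo the gauge group, to rescaling the weight of a connecting trail, since the boundary of every face has zero algebraic intersection with the cut while a trail joining the two circles has intersection $\pm1$. However, your explanation of the case $j_q\le n_1$ is not accurate. Even when all four boundary vertices lie on the outer circle, paths can wind around the inner circle and acquire nontrivial powers of $\lambda$, so the measurements do depend on $\lambda$ in general; the vanishing of $\Gamma$ in that case is a genuine cancellation in the induction, not the absence of $\lambda$-dependence. Likewise, inspection of the second subcase of~(\ref{psre21}) shows the derivative falls on $M_{\bar p\bar q}$, whose source and sink both lie on the outer circle, so the pattern ``derivative appears exactly on the measurement whose endpoints lie on different circles'' does not hold. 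Use the Euler-operator picture as an organizing principle for the inductive bookkeeping, but do not rely on it to predict the case split; that has to come out of the recursion itself.
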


\proof 
The proof is similar to the proof of Proposition~\ref{PSRE}. We leave the details to the interested reader.
\endproof

\begin{remark}
It is worth to mention that the bracket induced on $k\times m$ matrices via perfect planar networks in a disk, which was studied in \cite{GSV3}, can be considered as a particular case of~(\ref{psre1})--(\ref{psre4}) (for $\alpha=-\beta=1$) 
or~(\ref{psre21})--(\ref{psre23}) (for $\alpha=\beta=1$). To see this it suffices to consider only networks without edges that intersect the cut $\rho$, and to cut the annulus along $\rho$ in order to get a disk. 
\end{remark}

\subsection{Realization theorem}
To conclude the proof of Theorem~\ref{PSR} we need the following statement.
We say that $F\in\Rat_{k,m}$ is {\it represented by\/} a network $N$ if $F$ belongs to the image of $M_N$.

\begin{theorem}\label{realiz}
For any $F\in\Rat_{k,m}$ there exists a network $N\in \Net_{I_1,J_1,I_2,J_2}$ such that $F$ is represented by $N$.
\end{theorem}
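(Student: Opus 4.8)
The plan is to prove surjectivity by realizing $F$ as the boundary measurement matrix of a network assembled from elementary pieces, relying on the standard multiplicativity of boundary measurement matrices under concatenation (gluing the inner circle of one annular network to the outer circle of another yields an annular network whose matrix is the product; cf.\ \cite{GSV3}). Hence it suffices to realize a generating set of elementary rational matrix functions and then factor $F$ over the field $\R(\lambda)$ into such factors. Using the operations of Section~\ref{BM2} --- moving the base point of the cut (Proposition~\ref{movecut}, which changes $M_N$ by the invertible factors $\Lambda_\pm$), reversing the cut, and reversing the orientation of the circles, each of which alters $M_N$ by an explicit invertible rational transformation --- I would first normalize the configuration so that on each circle the sources precede the sinks; realizability for one normalized $(I_1,J_1,I_2,J_2)$ then yields it for all.

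The core is a scalar realization: every $f\in\R(\lambda)=\Rat_{1,1}$ is the boundary measurement of a network with a single source and a single sink. I would expand $f$ as a finite continued fraction whose partial quotients have degree at most one in $\lambda$, and realize each step by a single oriented loop winding once around the annulus and crossing the cut, which by~\eqref{weightviapath} contributes a factor $\lambda^{\pm1}$; summing the geometric series over repeated traversals of such a loop produces a rational factor of the form $1/(1-c\lambda)$, in the spirit of Corollary~\ref{sfree3}. Concatenating these gadgets in series realizes $f$. This is precisely the step where the annular geometry is indispensable: by the Remark preceding this subsection, a disk network can only reproduce the $\lambda$-independent specialization.

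To pass from scalars to matrices, I would reduce $F$ by elementary row and column operations over $\R(\lambda)$ to a rectangular diagonal form, recording these operations as left and right multiplication by elementary transvections $I+fE_{ab}$, $f\in\R(\lambda)$, and by a permutation matrix. Each factor is realized by a perfect network: a permutation by crossing strands, the rectangular diagonal $\diag(f_1,\dots)$ by a disjoint union of single-strand gadgets from the scalar lemma (with zero diagonal entries realized by the absence of any path), and a transvection $I+fE_{ab}$ by parallel through-strands together with one bridging gadget realizing $f$ and joining strand $b$ to strand $a$. Concatenating all factor-networks and applying multiplicativity yields $N\in\Net_{I_1,J_1,I_2,J_2}$ with $M_N=F$.

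The main obstacle is bookkeeping rather than algebra. One must guarantee that each gadget, and their concatenation, is a genuine perfect network --- internal vertices trivalent and consistently colored black or white, every degree-one vertex lying on the correct boundary circle, and the prescribed distribution of sources and sinks between the inner and outer circles respected; this is most delicate when a source and a sink share a circle, which forces strands that leave a circle and return to it. In parallel one must track the winding number $\ind(\cdot)$ and the concordance sign $(-1)^{c(C_P)-1}$ through every gadget and every gluing, so that both the powers of $\lambda$ and the overall signs in $M_N$ agree with those of $F$. Since the edge weights range over all nonzero reals and the number of gadgets is unbounded, no degree bound on the entries of $F$ can obstruct the construction.
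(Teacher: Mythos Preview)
Your overall architecture --- realize scalars first, then assemble matrices by factorization and concatenation --- matches the paper's. But there is a genuine gap: you treat planarity as ``bookkeeping,'' and it is not. When you write ``a permutation by crossing strands'' or propose a transvection $I+fE_{ab}$ via a ``bridging gadget joining strand $b$ to strand $a$,'' you are implicitly allowing edges to cross, which a perfect \emph{planar} network forbids. Winding strands around the annulus avoids crossings but introduces unwanted powers of $\lambda$, so it does not give constant permutation or elementary matrices for free. The paper resolves this with a dedicated lemma (Lemma~\ref{transpos}): there is a planar network $N_{\id}$ in a disk with sources $b_1,b_2$ and sinks $b_3,b_4$ in that cyclic order whose boundary measurement matrix is the $2\times2$ identity. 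Since a naive realization would require the path $b_1\to b_3$ to cross $b_2\to b_4$, this is a nontrivial gadget, and once you have it every crossing can be replaced by a copy of $N_{\id}$, effectively removing the planarity constraint. Your proposal is missing this ingredient.

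Apart from that, the differences are of taste rather than substance. For scalars, the paper builds $\Rat_{1,1}$ from $a\lambda^k$ using concatenation for products, an explicit ``direct sum'' gadget for sums, and a feedback loop for $F\mapsto F/(1+F)$ (hence $1/(1+F)$); your continued-fraction scheme is a reasonable alternative. For matrices, instead of a Smith-type reduction over $\R(\lambda)$ the paper writes $F=A\widetilde F B$ with $A,B$ constant $\{0,1\}$-matrices and $\widetilde F$ diagonal with the entries $F_{ij}$ on the diagonal, which confines all the rational content to independent scalar blocks and leaves only constant matrices (realizable in a disk, again via $N_{\id}$) on the outside; note also that concatenation yields $F_1W_0F_2$ rather than $F_1F_2$, a labeling twist one must track. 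Finally, moving the base point, reversing the cut, and reversing orientation do not by themselves change the partition of sources and sinks between the two circles, so your ``normalization'' step does not reduce all $(I_1,J_1,I_2,J_2)$ to one case; the paper instead first proves the fully separated case $\Net_{[1,k],\varnothing,\varnothing,[k+1,k+m]}$ and then reroutes the boundary edges (again using $N_{\id}$ to kill the resulting crossings) to reach an arbitrary configuration.
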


\proof
We preface the proof by the following simple observation concerning perfect planar networks in a disk.

\begin{lemma}\label{transpos}
Let $n=4$, $I=\{1,2\}$, $J=\{3,4\}$. 
There exists a network $N_{\id}\in \Net_{I,J}$ such that the $2\times 2$ identity matrix
%simple transposition matrix 
%\[
%\begin{pmatrix}
%0 & 1\\
%1 & 0
%\end{pmatrix}
%\]
is represented by $N_{\id}$.
%contained in the image of $M_N$.
\end{lemma}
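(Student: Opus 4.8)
The identity matrix corresponds, under the indexing $M_{pq}=M(i_p,j_q)$, to the requirements $M(1,3)=M(2,4)=1$ and $M(1,4)=M(2,3)=0$; that is, to the \emph{crossing} matching $1\to 3$, $2\to 4$ of the cyclically ordered boundary $b_1,b_2,b_3,b_4$. Since the chords $\{1,3\}$ and $\{2,4\}$ interleave, this matching cannot be realized by two vertex-disjoint directed paths in a disk, so the vanishing of the off-diagonal entries cannot come from the mere absence of paths. The plan is therefore to exhibit an explicit small network $N_{\id}\in\Net_{I,J}$ that resolves the unavoidable crossing by a local gadget, and to verify $M_{N_{\id}}=\Id$ by a direct path count in which the off-diagonal contributions cancel because of the sign $(-1)^{c(C_P)-1}$ appearing in \eqref{weightviapath}.

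Concretely, I would place $b_1,b_2$ (sources) and $b_3,b_4$ (sinks) on the boundary in counterclockwise order and insert a few trivalent black and white vertices forming a small diamond through which the strand from $b_1$ heading to $b_3$ and the strand from $b_2$ heading to $b_4$ both pass. One then enumerates all directed paths between each source--sink pair. Repeated traversals of the internal cycle are summed by \eqref{offcycle} into a geometric series, so that, exactly as in Proposition~\ref{sfree2}, each boundary measurement is a rational function of the edge weights. The edge weights are then specialized symmetrically so that, first, the two diagonal measurements $M(1,3)$ and $M(2,4)$ reduce to a single surviving path that can be normalized to weight $1$, and, second, each off-diagonal measurement $M(1,4)$ and $M(2,3)$ becomes a sum of two paths $P$ and $P'$ of equal magnitude.

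The heart of the argument, and the step I expect to be the main obstacle, is the sign bookkeeping. For each cancelling pair, the closed curves $C_P$ and $C_{P'}$ built by the construction preceding \eqref{weightviapath} differ by a single simple loop bounding the diamond, and such a loop has odd concordance number; hence $c(C_P)$ and $c(C_{P'})$ have opposite parities and the two weights enter \eqref{weightviapath} with opposite signs. The paired contributions then cancel \emph{exactly}, independently of their common magnitude, forcing $M(1,4)=M(2,3)=0$ while the diagonal entries remain $1$. Establishing this parity statement by computing the relevant concordance numbers via polygonal approximation, together with checking that the proposed graph is genuinely planar with boundary order $b_1,b_2,b_3,b_4$, is the delicate part: the two demands pull against each other precisely because a crossing must be present to produce a full-rank matrix yet must be absent as an honest pair of disjoint paths. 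As a robustness check one may also view $N_{\id}$ as the concatenation of two nested ``swap'' networks, each realizing the non-crossing anti-diagonal matching $1\to 4$, $2\to 3$ with unit weights, since the square of the swap matrix is the identity; contracting the bivalent glue vertices then recovers precisely a crossing gadget of the above type, which explains structurally why such an $N_{\id}$ must exist.
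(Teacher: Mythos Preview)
Your sign-cancellation mechanism cannot work as stated. In a perfect network in a disk, any path $P$ with no repeated edges also has no repeated vertices (a trivalent vertex with a unique incoming or unique outgoing edge cannot be traversed twice without reusing that edge), so the closed curve $C_P$ is a \emph{simple} closed curve and hence has concordance number~$1$. Thus $(-1)^{c(C_P)-1}=+1$ for \emph{every} simple path. Consequently, if your diamond is acyclic, all paths from $b_1$ to $b_4$ are simple and enter with the same sign $+$, so nothing cancels. If instead the diamond carries a directed cycle $C^0$, then together with the path traversing $C^0$ once you necessarily have paths traversing it $k$ times for every $k\ge 0$; by \eqref{offcycle} their contributions form the geometric series $w_P\sum_{k\ge 0}(-w_{C^0})^k=w_P/(1+w_{C^0})$, a nonzero rational function of the weights rather than~$0$. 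The scenario ``exactly two paths whose concordance parities differ'' therefore never arises. Your concatenation check fails for a similar reason: under the paper's gluing conventions the concatenation of two disks represents $F_1W_0F_2$, so two copies of the swap $W_0$ produce $W_0^3=W_0$, not the identity.

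The paper's proof bypasses all of this by using that edge weights lie in $\R\setminus 0$, not in $\R_{>0}$. It writes down an explicit $11$-edge network, computes the full $2\times 2$ boundary measurement matrix as polynomials in $w_1,\dots,w_{11}$, and then specializes $w_5=w_{10}=-1$ and the remaining $w_i=1$. The off-diagonal entries $w_1w_3w_4(w_2+w_6w_9w_{10})$ and $w_1w_6w_8(w_7+w_3w_{10}w_{11})$ vanish because of the inserted $-1$'s, not because of any concordance sign, while the diagonal entries evaluate to~$1$. In short, $N_{\id}$ exists precisely because negative edge weights are allowed; any argument that ignores this and relies only on the built-in signs of \eqref{weightviapath} is bound to fail.
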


\proof The proof is furnished by the network depicted on Fig.~\ref{fig:transpos}.
The corresponding boundary measurement matrix is given by
\[
\begin{pmatrix}
w_1w_8\big(w_3w_{11}(w_2+w_6w_9w_{10})+w_6w_7w_9\big) & 
w_1w_3w_4(w_2+w_6w_9w_{10})\\
w_1w_6w_8(w_7+w_3w_{10}w_{11}) & w_3w_4w_5w_6w_{10}
\end{pmatrix},
\]
which yields the identity matrix for $w_5=w_{10}=-1$ and $w_i=1$ for $i\ne 5,10$.

\begin{figure}[ht]
\begin{center}
\includegraphics[height=4.5cm]{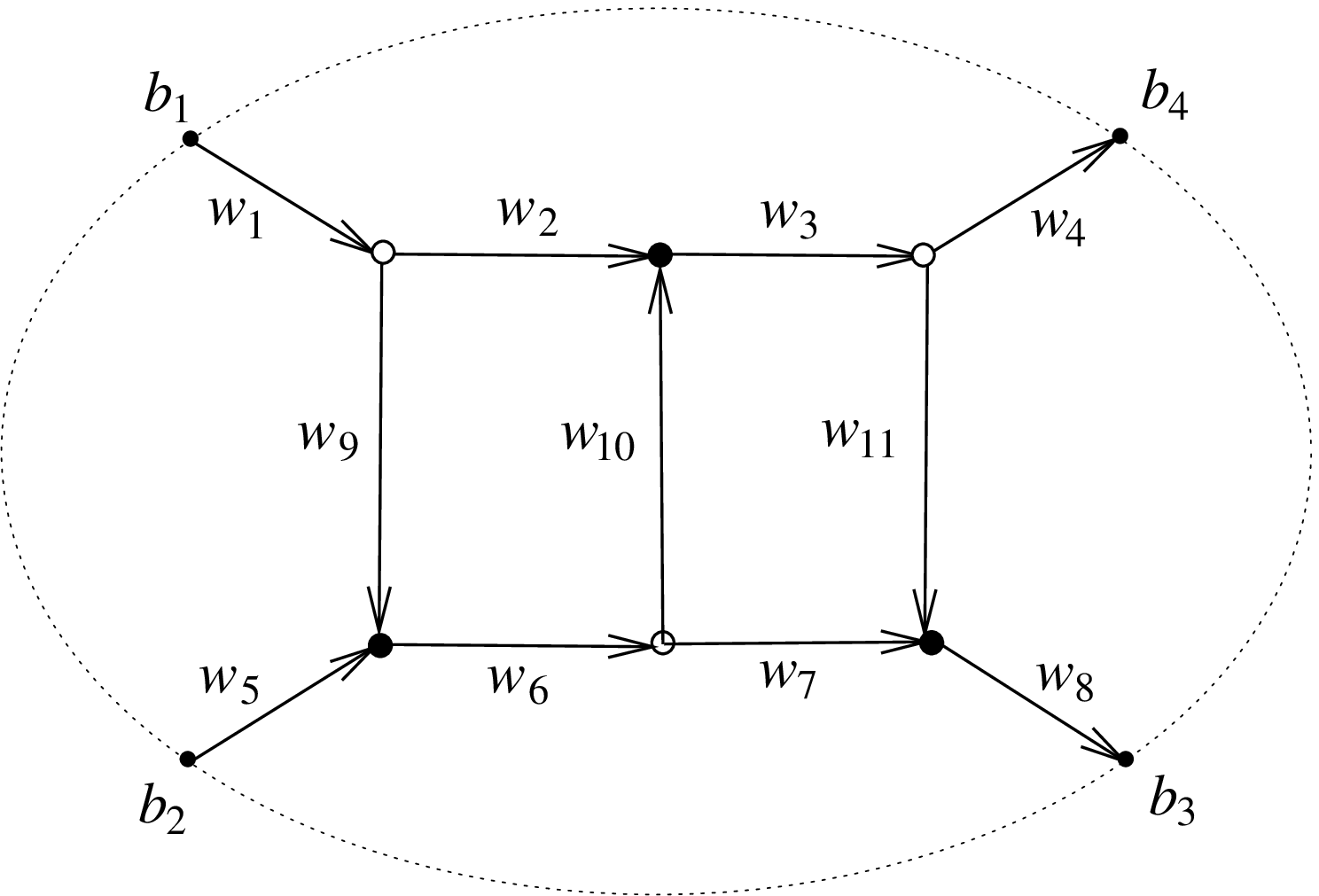}
\caption{Network $N_{\id}$}
\label{fig:transpos}
\end{center}
\end{figure}

\endproof

Effectively, Lemma~\ref{transpos} says that the planarity restriction can be omitted in the proof of Theorem~\ref{realiz}. Indeed, if $F\in\Rat_{k,m}$ is represented by a nonplanar perfect network in an annulus, one can turn it to a planar perfect network in annulus by replacing each intersection by a copy of $N_{\id}$.

In what follows we make use of the concatenation of planar networks in an annulus. Similarly to the case of networks in a disk, the most important particular case of concatenation arises when the sources and the sinks are separated, that is, all sources lie on the outer circle, and all sinks lie on the inner circle. We can concatenate two networks of this kind, one with  $k$  sources and $m$ sinks and another with $m$  sources and
$l$ sinks, by gluing the sinks of the former to the sources of the latter.
More exactly, we glue together the inner circle of the former network and the outer circle of the latter in such a way that the corresponding base points of the cuts 
are identified, and the $i$th sink of the former network is identified with the $(m+1-i)$th source of the latter. The erasure of the common boundary and the identification of edges are performed exactly as in the case of a disk. 

Let us start with representing any rational function $F\in\Rat_{1,1}$ by a network with the only source on the outer circle and the only sink on the inner circle.  

\begin{lemma}\label{1-1realiz}
Any rational function $F\in\Rat_{1,1}$ can be represented by a network $N\in \Net_{1,\varnothing,\varnothing,2}$.
\end{lemma}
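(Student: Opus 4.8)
The plan is to assemble the required network from a few elementary ``gadgets'' using two composition operations whose effect on the single boundary measurement $M(1,2)$ is transparent. The first is the concatenation of separated $1$-source--$1$-sink networks described above: gluing the sink of $N_1$ to the source of $N_2$ yields a network again in $\Net_{1,\varnothing,\varnothing,2}$ whose boundary measurement is the product $M_{N_1}M_{N_2}$, because the glued vertex carries exactly one incoming and one outgoing edge and is therefore traversed by every path precisely once. Intersection numbers with the identified cut add, so the powers of $\lambda$ multiply correctly, and the global sign coming from the concordance numbers in (\ref{weightviapath}) is fixed and can be absorbed into the sign of a single edge weight. The second operation is parallel composition between a common source and a common sink (splitting at a white vertex and re-merging at a black one), whose effect is to \emph{add} the two boundary measurements.

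I would first record the elementary gadgets. A single edge, or short path, routed so as to cross the cut exactly $k$ times and carrying total weight $c$ realizes the monomial $\pm c\lambda^{k}$, the sign being pinned down by the concordance number and then corrected by flipping $\mathrm{sign}(c)$; summing several such parallel paths of prescribed winding realizes an arbitrary Laurent polynomial $A\in\R[\lambda,\lambda^{-1}]$, since the path weights are independent free parameters. The crucial gadget is inversion: attaching a single simple loop $C^{0}$ at an internal vertex, so that a path may traverse the loop any number of times, produces by repeated application of (\ref{offcycle}) the geometric sum $\sum_{j\ge 0}(-w_{C^{0}})^{j}=1/(1+w_{C^{0}})$; routing the loop across the cut gives the reciprocal $1/(1+c\lambda^{k})$. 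More generally, enclosing an entire subnetwork realizing $g(\lambda)$ inside such a loop, by feeding its sink back to its source, yields a network realizing $g\mapsto 1/(1+g)$, the per-loop factor $-g$ again arising from (\ref{offcycle}).

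With the operations $\mathrm{ADD}$ (add a prescribed polynomial, via parallel composition) and $\mathrm{INV}$ (replace the current measurement $g$ by $1/(1+g)$, via feedback) available, I would run the Euclidean algorithm to expand $F$ as a finite continued fraction $F=A_{0}+\cfrac{1}{A_{1}+\cfrac{1}{A_{2}+\cdots+\cfrac{1}{A_{r}}}}$ with polynomial partial quotients $A_{i}\in\R[\lambda]$, and realize $F$ by induction from the inside out. The innermost polynomial $A_{r}$ is a parallel-path gadget, and passing from a network realizing a tail $T$ to one realizing $A_{i}+1/T$ is accomplished by $\mathrm{ADD}(-1)$, then $\mathrm{INV}$ (which yields $1/T$), then $\mathrm{ADD}(A_{i})$. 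This never requires factoring $F$ over $\R$, so the awkward irreducible real quadratic factors never appear explicitly; their effect is produced automatically by the nesting of the elementary reciprocals. Throughout, nonplanar crossings created by the gadgets are harmless and may be replaced by copies of $N_{\id}$ from Lemma~\ref{transpos}, and one checks that every intermediate network indeed lies in $\Net_{1,\varnothing,\varnothing,2}$.

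The main obstacle I anticipate is the rigorous verification of the two composition operations as genuine \emph{perfect} networks, i.e.\ keeping all internal vertices trivalent of the correct colour and the boundary vertices of the prescribed source/sink type, while simultaneously tracking the signs $(-1)^{c(C_{P})-1}$ through concatenation and feedback. In particular, confirming that the feedback gadget contributes exactly the factor $1/(1+g)$ — rather than $1/(1-g)$ or a form polluted by the through-path weight — requires a careful choice of the loop so that the base path picks up trivial weight and the loop traverses the enclosed subnetwork exactly once. This sign-and-degree bookkeeping, rather than any conceptual difficulty, is where the real work lies.
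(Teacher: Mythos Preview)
Your proposal is correct and assembles the same toolbox as the paper: concatenation for products, a parallel gadget for sums, a feedback gadget producing $1/(1+g)$ (the paper in fact first builds $g/(1+g)$ and then recovers $1/(1+g)=1-g/(1+g)$ via the sum gadget), monomials $c\lambda^k$ as the atoms, and $N_{\id}$ to resolve crossings. Where you diverge is in the global decomposition of $F$. The paper writes $F(\lambda)=\sum_{i} a_i\lambda^{d+i}/Q(\lambda)$ with $Q(0)=1$, realizes $1/Q=1/\bigl(1+(Q-1)\bigr)$ by a \emph{single} application of feedback around the polynomial network for $Q-1$, then concatenates each monomial $a_i\lambda^{d+i}$ with this fixed $1/Q$ and sums the results. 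You instead run the Euclidean algorithm and realize $F$ as a finite continued fraction, nesting the feedback gadget once per partial quotient via your $\mathrm{ADD}(-1)$, $\mathrm{INV}$, $\mathrm{ADD}(A_i)$ step. The paper's route confines feedback to one layer, so the concordance/sign verification you correctly flag as the delicate point has to be done only once; your route is more uniform---just iterate the single step $T\mapsto A_i+1/T$---but the same sign bookkeeping must be shown to survive arbitrarily deep nesting of feedback loops. Neither decomposition is intrinsically harder, and both land in $\Net_{1,\varnothing,\varnothing,2}$.
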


\proof
First, if networks $N_1,N_2\in \Net_{1,\varnothing,\varnothing,2}$ represent functions $F_1$ and $F_2$ respectively, their concatenation $N_1\circ N_2\in \Net_{1,\varnothing,\varnothing,2}$ represents $F_1F_2$. 

\begin{figure}[ht]
\begin{center}
\includegraphics[height=5cm]{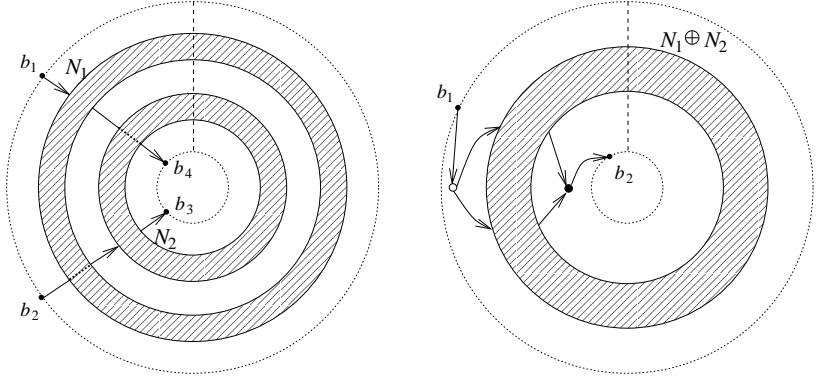}
\caption{The direct sum of two networks (left) and a network representing the sum of two functions (right)}
\label{fig:dirsum}
\end{center}
\end{figure}

Second, define the {\it direct sum\/} $N_1\oplus N_2\in \Net_{[1,2],\varnothing,\varnothing,[3,4]}$ as shown in the left part of Fig.~\ref{fig:dirsum}. 
The shadowed annuli contain networks $N_1$ and $N_2$. The intersections of the dashed parts of additional edges with the edges of $N_1$ and $N_2$ are resolved with the help of $N_{\id}$ (not shown). Note that this direct sum operation is not commutative. Clearly, $N_1\oplus N_2$ represents the $2\times 2$ matrix $\begin{pmatrix}
0 & F_1\\
F_2 & 0
\end{pmatrix}$. The direct sum of networks is used to represent the sum $F_1+F_2$ as shown in the right part of Fig.~\ref{fig:dirsum}. 

Third, if $N\in \Net_{1,\varnothing,\varnothing,2}$ represents $F$, the network shown in Fig.~\ref{fig:netdenom} represents $F/(1+F)$, and, with a simple adjustment of weights, can also be used to represent $-F/(1+F)$. Taking the direct sum with the trivial network representing~$1$, we get a representation for $1/(1+F)$. 

\begin{figure}[ht]
\begin{center}
\includegraphics[height=5cm]{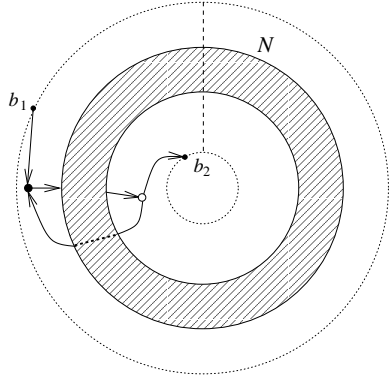}
\caption{Representing $F/(1+F)$}
\label{fig:netdenom}
\end{center}
\end{figure}

Finally, functions $a\lambda^k$ for any integer $k$ can be represented by networks in $\Net_{1,\varnothing,\varnothing,2}$. The cases $k=2$ and $k=-2$ are shown in Fig.~\ref{fig:rep1}. Other values of $k$ are obtained in the same way.

\begin{figure}[ht]
\begin{center}
\includegraphics[height=4cm]{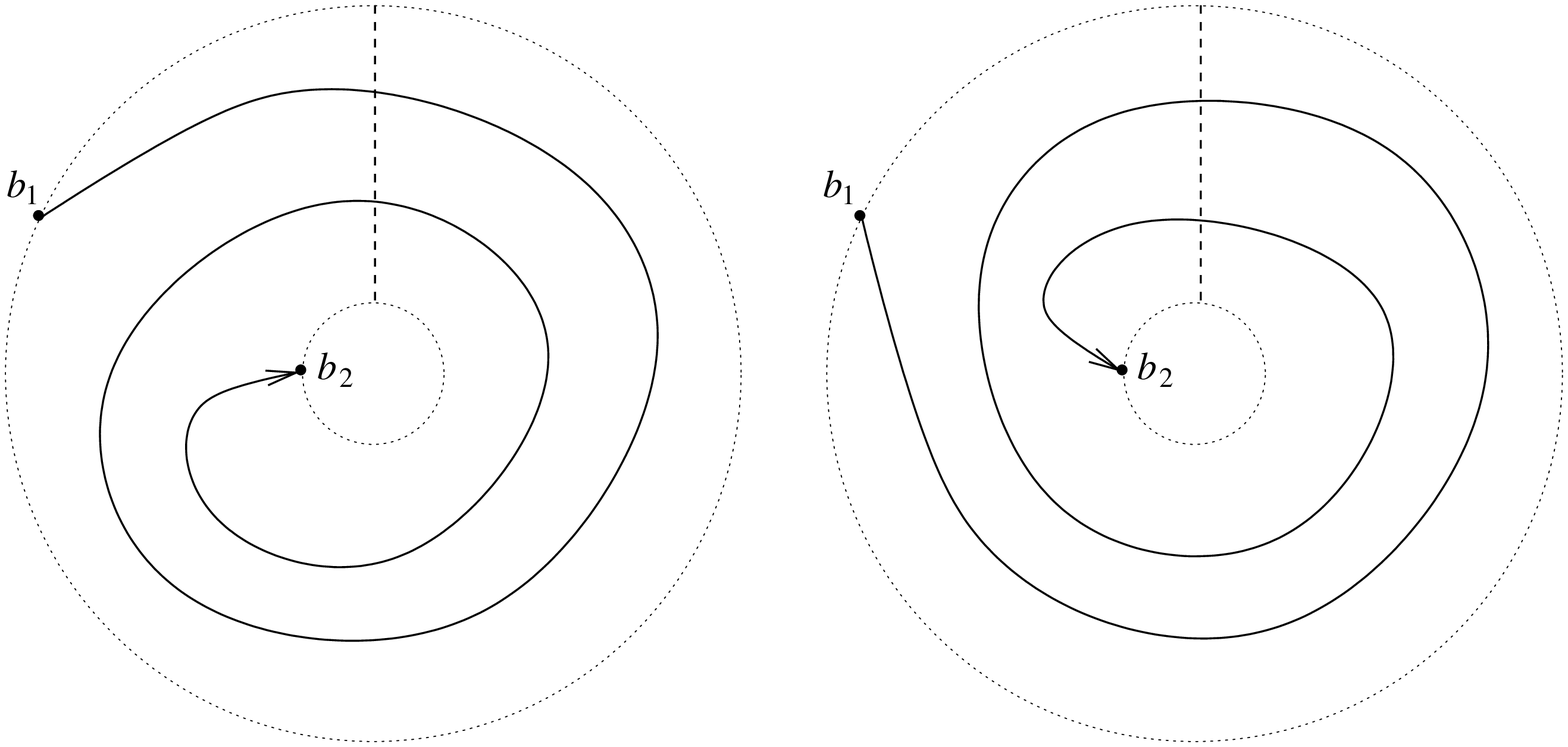}
\caption{Representing $a\lambda^k$ for $k=2$ (left) and $k=-2$ (right)}
\label{fig:rep1}
\end{center}
\end{figure}

We now have all the ingredients for the proof of the lemma. Any rational function $F$ can be represented as $F(\lambda)=\sum_{i=0}^r a_i\lambda^{d+i}/Q(\lambda)$, where $d$ is an integer and $Q$ is a polynomial satisfying $Q(0)=1$. Therefore, it suffices to represent each of the summands, and to use the direct sum construction. Each summand, in its turn, is represented by the concatenation of a network representing $a_i\lambda^{d+i}$ with a network representing $1/Q=1/(1+(Q-1)$. The latter network is obtained as explained above from a network representing $Q-1=\sum_{j=1}^pb_j\lambda^{j}$ via the direct sum construction.
\endproof

To get an analog of Lemma~\ref{1-1realiz} for networks with the only source and the only sink on the outer circle, one has to use once again Lemma~\ref{transpos}.  

\begin{lemma}\label{2-0realiz}
Any rational function $F\in\Rat_{1,1}$ can be represented by a network $N\in \Net_{1,2,\varnothing,\varnothing}$.
\end{lemma}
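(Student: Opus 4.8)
The plan is to reduce Lemma~\ref{2-0realiz} to the already-established Lemma~\ref{1-1realiz} by means of the network $N_{\id}$ produced in Lemma~\ref{transpos}. The key observation is that Lemma~\ref{1-1realiz} gives, for any $F\in\Rat_{1,1}$, a network $N\in\Net_{1,\varnothing,\varnothing,2}$ representing $F$, that is, a network whose single source $b_1$ lies on the outer circle and whose single sink $b_2$ lies on the inner circle. To obtain a network in $\Net_{1,2,\varnothing,\varnothing}$, where \emph{both} the source and the sink are required to lie on the outer circle, I would concatenate $N$ with an auxiliary network that transports the boundary vertex from the inner circle back to the outer circle while preserving the value of the boundary measurement.

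First I would recall the concatenation operation for separated networks, as described just before Lemma~\ref{1-1realiz}: gluing the inner circle of one network to the outer circle of another, identifying base points of the cuts and matching the $i$th sink of the former with the $(m+1-i)$th source of the latter. The idea is to append to the sink $b_2$ of $N$ a short ``turn-around'' gadget that carries a single strand from the inner circle to the outer circle, so that the resulting network has both its source and its sink on the outer circle. The role of $N_{\id}$ from Lemma~\ref{transpos} is exactly to realize such a transposition/identity connection without altering the transmitted weight: since $N_{\id}$ represents the identity matrix, threading the strand through it multiplies the boundary measurement by $1$, leaving $F$ unchanged. Concretely, I would build a fixed gadget network, independent of $F$, whose boundary measurement matrix is the $1\times 1$ identity and whose source sits on the inner circle and sink on the outer circle; concatenating $N$ with this gadget yields the desired $N'\in\Net_{1,2,\varnothing,\varnothing}$ with $M_{N'}=F$.

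The main thing to verify carefully is that the concatenation genuinely lands in $\Net_{1,2,\varnothing,\varnothing}$ and that the weight of the unique source-to-sink path in the concatenated network equals $F$ times the weight contributed by the gadget, with no spurious factors of $\lambda$ introduced by the cut. This is where the bookkeeping from Section~\ref{PandW2} is essential: one must check that the cuts of $N$ and of the gadget are identified compatibly (same base points) so that the intersection index $\ind$ of the composite path is simply the sum of the indices, and that the concordance-number sign works out, so that the composite weight factors as a product exactly as in the disk case. The realization of a single identity strand connecting the two circles is the analog, in the present ``$2$--$0$'' geometry, of the identity network $N_{\id}$, and I expect the only real obstacle to be confirming that such a gadget exists as a \emph{planar} perfect network in the annulus; any intersections created while routing the strand across the cut can be resolved by inserting copies of $N_{\id}$, exactly as in the remark following Lemma~\ref{transpos}.

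Once the gadget is in place, the proof is immediate: apply Lemma~\ref{1-1realiz} to obtain $N\in\Net_{1,\varnothing,\varnothing,2}$ with $M_N=F$, concatenate with the identity gadget to move the sink to the outer circle, and invoke the multiplicativity of boundary measurements under concatenation of separated networks to conclude that the resulting network in $\Net_{1,2,\varnothing,\varnothing}$ also represents $F$. Since $F\in\Rat_{1,1}$ was arbitrary, this establishes the lemma.
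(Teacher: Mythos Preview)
Your proposal has a genuine gap: the concatenation operation you invoke cannot produce a network in $\Net_{1,2,\varnothing,\varnothing}$. The concatenation defined in the paper glues the inner circle of the first network to the outer circle of the second, and it is set up precisely for \emph{separated} networks (all sources on the outer circle, all sinks on the inner). The output of such a concatenation is again separated: its sources sit on the outer circle of the first factor and its sinks on the inner circle of the second. So no matter what gadget you append to $N\in\Net_{1,\varnothing,\varnothing,2}$ via this operation, the result still has its sink on an inner circle, never on the outer one. Your proposed gadget, with source on the inner circle and sink on the outer circle, is not a separated network at all, so the paper's concatenation does not even apply to it; and if you try the ad hoc gluing of the two inner circles, the resulting annulus has $N$'s source on one boundary component and the gadget's sink on the other, which is again separated. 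In short, concatenation moves you between separated networks and cannot be used to ``turn the strand around'' to the outer circle.

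The paper's proof avoids this by working inside the single annulus carrying $N$: one simply deletes the edge incident to the sink $b_2$ on the inner circle and replaces it by a new edge with the same tail but with its head at a new sink placed on the outer circle. This is not a concatenation; it is a local modification of $N$ itself. Any crossings the new edge creates with existing edges of $N$ are then resolved by inserting copies of $N_{\id}$, exactly as you anticipated. The role of $N_{\id}$ here is solely to restore planarity, not to serve as a transport gadget. One then checks that rerouting the last edge multiplies every source-to-sink path weight by the same factor (depending only on the old and new last edges and on the change in the closing curve), so a suitable choice of weight on the new edge recovers the boundary measurement $F$.
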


\proof  Such a representation is obtained from the one constructed in the proof of Lemma~\ref{1-1realiz} by replacing the edge incident to the sink with a new edge sharing the same tail. The arising intersections, if any, are resolved with the help of $N_{\id}$. For example, representation of $a(1+b\lambda)^{-1}$ obtained this way is shown in Fig.~\ref{fig:rep6} on the right. It makes use of the network $N_{\id}$ described in Lemma~\ref{transpos}; the latter is shown in thin lines inside a dashed circle.
Note that the network on the left, which represents $a(1+b\lambda)^{-1}$ in $\Net_{1,\varnothing,\varnothing,2}$, is not the one built in the proof of Lemma~\ref{1-1realiz}.

\begin{figure}[ht]
\begin{center}
\includegraphics[height=5cm]{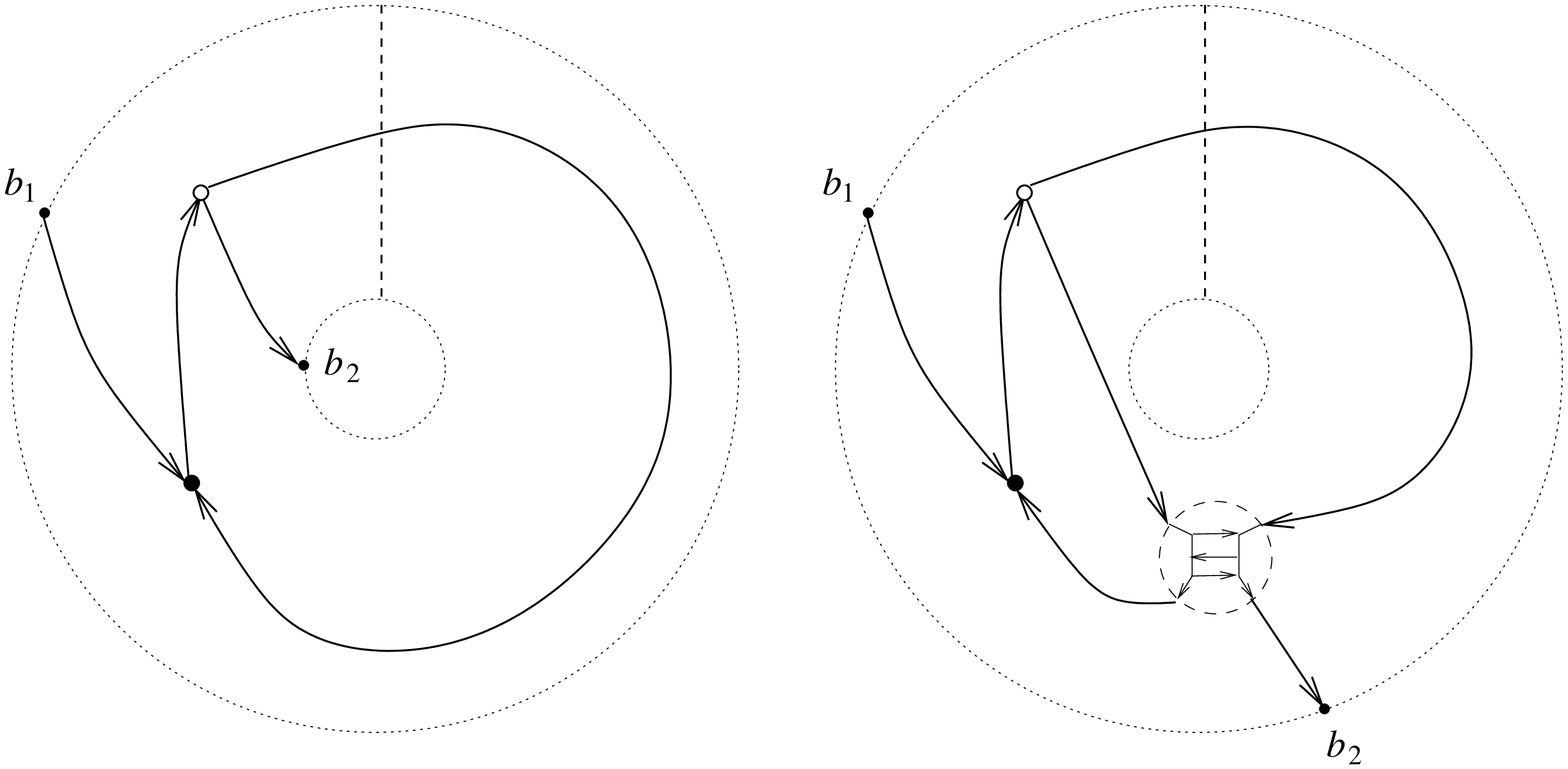}
\caption{Representing $a(1+b\lambda)^{-1}$ by a network in
$\Net_{1,\varnothing,\varnothing,2}$ (left) and 
 $\Net_{1,2,\varnothing,\varnothing}$ (right)}
\label{fig:rep6}
\end{center}
\end{figure}

\endproof

Representation of rational functions by networks in $\Net_{2,1,\varnothing,\varnothing}$ and $\Net_{\varnothing,1,2,\varnothing}$ is obtained in a similar way. In the latter case one has to replace also the edge incident to the source with a new one sharing the head.

The next step is to prove Theorem~\ref{realiz} in the case when all sources lie on the outer circle and all sinks lie on the inner circle.

\begin{lemma}\label{seprealiz}
For any rational matrix $F\in\Rat_{k,m}$ there exists a network $N\in \Net_{[1,k],\varnothing,\varnothing,[k+1,k+m]}$ such that $F$ is represented by $N$.
\end{lemma}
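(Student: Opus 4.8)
The plan is to assemble $F=(F_{pq})$ from the one-by-one building blocks of Lemma~\ref{1-1realiz} by a split-and-merge construction, and then to remove the crossings that this construction inevitably creates by inserting copies of the network $N_{\id}$ of Lemma~\ref{transpos}. Concretely, I would place the $k$ sources $b_1,\dots,b_k$ on the outer circle and the $m$ sinks $b_{k+1},\dots,b_{k+m}$ on the inner circle, with the cut and the labeling chosen so that the resulting object lies in $\Net_{[1,k],\varnothing,\varnothing,[k+1,k+m]}$. From each source $b_p$ I run its single outgoing edge into a binary tree of $m-1$ white vertices that splits it into $m$ branches, one aimed at each sink; symmetrically, into each sink $b_{k+q}$ I feed a binary tree of $k-1$ black vertices that merges $k$ incoming branches, one coming from each source. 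On the branch joining source $b_p$ to sink $b_{k+q}$ I insert a one-by-one gadget representing $F_{pq}$, which exists by Lemma~\ref{1-1realiz} (and, for the requisite placement of the free end, by its variant Lemma~\ref{2-0realiz}); all edges of the splitting and merging trees are assigned weight $1$. Note that the white and black trees are consistent with the degree and color conventions of Section~\ref{PandW2}.

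Next I would verify that this network computes $F$ exactly. Since every directed path runs from a source through exactly one split-branch, then through its gadget, and then through one merge-branch to a single sink, the only paths from $b_p$ to $b_{k+q}$ are those passing through the $(p,q)$ gadget, and the splitting and merging edges contribute only the trivial factor $1$. Hence $M_{pq}=F_{pq}$, with no spurious boundary measurements between the other source--sink pairs. Any auxiliary powers of $\lambda$ or signs $(-1)^{c(C_P)-1}$ arising from the cut are harmless, because the gadget of Lemma~\ref{1-1realiz} already represents an \emph{arbitrary} element of $\Rat_{1,1}$ and can absorb them.

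The model constructed above is in general non-planar: the $km$ branches linking the splitting trees to the merging trees cross one another. To finish, I would planarize by replacing each transversal crossing of two strands with a copy of $N_{\id}$, exactly as indicated in the discussion following Lemma~\ref{transpos}. Because $N_{\id}$ represents the identity matrix, it transports the two values carried by the crossing strands past each other without mixing them and without creating any new connection, so none of the boundary measurements $M_{pq}$ is altered. After all crossings are resolved the network is a genuine perfect planar network in $\Net_{[1,k],\varnothing,\varnothing,[k+1,k+m]}$ representing $F$.

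I expect the main obstacle to be bookkeeping rather than a conceptual difficulty: one must check that the crossing resolutions by $N_{\id}$ can be carried out consistently for all $km$ channels at once, without introducing unintended cycles or additional intersections with the cut, and that the base point, the cut, and the counterclockwise/clockwise labeling conventions of Section~\ref{BM2} are respected so that the sources are indeed indexed $1,\dots,k$ on the outer circle and the sinks $k+1,\dots,k+m$ on the inner circle. Once one sees that these local replacements leave the boundary measurements intact, the identity $M_{pq}=F_{pq}$ established for the non-planar model carries over verbatim to the planar network.
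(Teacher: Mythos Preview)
Your proposal is correct and is essentially the same construction as the paper's, only phrased geometrically rather than algebraically. The paper writes $F=A\widetilde F B$ with $A$ a $k\times km$ ``splitter'', $\widetilde F$ a $km\times km$ diagonal matrix carrying the individual entries $F_{pq}$, and $B$ a $km\times m$ ``merger'', and then realizes each factor by a network (the diagonal block via the direct-sum construction of Lemma~\ref{1-1realiz}) and composes them by concatenation; your white splitting trees, inline $1\times1$ gadgets, and black merging trees are exactly these three factors, and both proofs finish by resolving the resulting crossings with copies of $N_{\id}$.
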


\proof
First of all, we represent $F$ as $F=A\widetilde{F}B$, where $A=\{a_{ij}\}$ is the $k\times km$ constant matrix given by
\[
a_{ij}=\left\{\begin{array}{ll}
1, \quad & \text{if $(k-i)m<j\leq (k-i+1)m$},\\
0, \quad & \text{otherwise},
\end{array}\right.
\]
$\widetilde{F}$ is the $km\times km$ diagonal matrix 
\[
\widetilde{F}=\diag\{F_{km},F_{k,m-1},\dots,F_{k1},F_{k-1,m},\dots, F_{11}\},
\]
and $B$ is the $km\times m$ constant matrix
\[
B=\begin{pmatrix}
W_0\\
\vdots\\
W_0
\end{pmatrix}
\]
with $W_0=(\delta_{i,m+1-j})_{i,j=1}^m$.
Similarly to the case of networks in a disk, the  concatenation of networks representing matrices $F_1$ and $F_2$ produces a network representing $F_1W_0F_2$.
%(assuming that the number of sinks in the first network and the number of sources %in the second network equal $m$). 
Therefore, in order to get $F$ as above, we have to represent matrices $A$, $W_0\widetilde{F}$ and $W_0B$.

The first representation is achieved trivially as the disjoint union of networks representing the $1\times m$ matrix $(1\ 1\ \dots 1)$; in fact, since $A$ is constant, it can be represented by a network in a disk.
The second representation is obtained as the direct sum of networks representing each of $F_{ij}$. Finally, the third representation can be also achieved by a network in a disk, via a repeated use of the network $N_{\id}$.

Observe that in order to represent a $k\times m$ matrix we have to use intermediate matrices of a larger size.
\endproof

To complete the proof of Theorem~\ref{realiz} we rely on Lemma~\ref{seprealiz} and use the same idea of replacing edges incident to boundary vertices as in the proof of Lemma~\ref{2-0realiz}.

So, Theorem~\ref{realiz} has been proved, and hence the proof of Theorem~\ref{PSR} is completed.
\endproof

As we already mentioned in the proof, the 2-parametric family of Poisson brackets on $\Rat_{k,m}$ induced by 
$(\alpha-\beta)\Poi_N^1+(\alpha+\beta)\Poi_N^2$, where $\Poi_n^1$ and $\Poi_n^2$ are described in Propositions~\ref{PSRE} and~\ref{PSRE2}, respectively,
is denoted $\Poi_{I_1,J_1,I_2,J_2}$. 

\subsection{Recovering the trigonometric R-matrix bracket on $\Rat_{k,k}$}

As an application of results obtained in Section~\ref{PSonrat}, consider the set 
%Let us consider a situation when $ n = 2k$ and a network $N$ belongs to  
$N_{[1,k],\varnothing,\varnothing,[k+1,2k]}$ of perfect networks with $k$ sources on the outer circle and $k$ sinks on the inner circle. Clearly, in this case  
the boundary measurement map takes $\CC_N$ to $\Rat_{k,k}$. Just as we did in the proof of Theorem~\ref{realiz}, we can replace  $M_N$ with $A_N= M_N W_0$ and observe that the concatenation $N$
of networks $N_1, N_2 \in N_{[1,k],\varnothing,\varnothing,
[k+1,2k]}$ leads to $A_N = A_{N_1} A_{N_2}$. We would like to take a closer look at  the bracket
$\Poi_N^1$ in this case.

First, recall (see, e.g. \cite{FT}) that the space $\Rat_{k,k}$ can be
equipped with an R-matrix  (Sklyanin)  Poisson bracket
\begin{equation}
\{A(t), A(s)\} = \left [ R(t, s), A(s)\otimes A(t) \right ],
\label{ansklyanin}
\end{equation}
where the left-hand side should be understood as
\[
\{A(t), A(s)\}_{p\bar p}^{q\bar q}= \{a_{pq}(t), a_{\bar p \bar q}(s)\}
\]
and the R-matrix $R(t, s)$ is an operator acting in $\mathbb{R}^{2k}\otimes  \mathbb{R}^{2k}$ that depends on parameters $t, s$  and solves  the classical Yang-Baxter equation. Of interest to us is
a bracket~(\ref{ansklyanin}) that corresponds to the so-called {\em trigonometric R-matrix} \cite{BD}
\begin{equation}{\displaystyle
R(t, s)=
%R_n(t, s)= 
\frac{t+s}{s-t}\sum_{k=1}^n E_{kk}\otimes E_{kk}\ 
+\ \frac{2}{s-t}\sum_{1 \le l < m \le 2k}
\left ( t E_{lm}\otimes E_{ml} + s E_{ml}\otimes E_{lm}\right ).
}
\label{trig-matrix}
\end{equation}

Bracket~(\ref{ansklyanin}),~(\ref{trig-matrix}) can be re-written in terms of matrix entries of $A(t)$ as follows (we only list non-zero brackets):
for $p<\bar p$ and $q< \bar q$,
\begin{equation}
{\displaystyle
\{a_{pq}(t), a_{\bar p \bar q}(s)\} =
2 \frac{t a_{p\bar q}(s) a_{\bar p q}(t)
-s a_{p\bar q}(t) a_{\bar p q}(s)}{t-s}.
}
\label{brack_ij_ab}
\end{equation}

\begin{equation}
{\displaystyle
\{a_{p\bar q}(t), a_{\bar p q}(s)\} = 2 t
 \frac{a_{pq}(s) a_{\bar p \bar q}(t)- a_{pq}(t) a_{\bar p \bar q}(s)}{t-s}.
}
\label{brack_ib_aj}
\end{equation}

\begin{equation}
{\displaystyle
\{a_{pq}(t), a_{p \bar q}(s)\} =
 \frac{(t+s) a_{p\bar q}(s) a_{pq}(t)- 2s a_{p\bar q}(t) a_{pq}(s)}{t-s}.
}
\label{brack_ij_ib}
\end{equation}

\begin{equation}
{\displaystyle
\{a_{pq}(t), a_{\bar p q}(s)\} =
 \frac{2 t a_{pq}(s) a_{\bar p q}(t)-
 (t+s) a_{pq}(t) a_{\bar p q}(s)}{t-s}.
}
\label{brack_ij_aj}
\end{equation}

It is now straightforward to check that for $N \in N_{[1,k],\varnothing,\varnothing,
[k+1,2k]}$, the Poisson algebra satisfied by the entries of $A_N$ coincides with
that of the Sklyanin bracket~(\ref{ansklyanin}),~(\ref{trig-matrix}). More exactly,
relations~(\ref{brack_ij_ab}) and~(\ref{brack_ij_ib}) are equivalent
to~(\ref{psre1}) with $\Phi_{pq}^{\bar p \bar q}(t,s)$ calculated according to the third case, while~(\ref{brack_ib_aj}) and~(\ref{brack_ij_aj})  are equivalent
to~(\ref{psre2}) with $\Psi_{pq}^{\bar p \bar q}(t,s)$ calculated according to the first case. Finally, the brackets that vanish identically, correspond exactly to the situations listed in the second case in~(\ref{psre3}) and in the third case
in~(\ref{psre4}).

To summarize, we obtained the following statement.

\begin{theorem}\label{sklyaninbr}
For any $N \in N_{[1,k],\varnothing,\varnothing,
[k+1,2k]}$ and any choice of parameters $\alpha_{ij}, \beta_{ij}$ in~(\ref{6parw}),~(\ref{6parb}) such that 
$\alpha=1$ and $\beta=-1$ in~(\ref{cond}), the map $A_N : (\mathbb{R}\setminus 0)^{|E|} \to \Rat_{k,k}$ is Poisson
with respect to the Sklyanin bracket~(\ref{ansklyanin}) associated with the R-matrix~(\ref{trig-matrix}).
\end{theorem}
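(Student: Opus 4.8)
The plan is to verify directly that $A_N$ intertwines the bracket $\Poi_N^1$ of Proposition~\ref{PSRE} with the Sklyanin bracket~(\ref{ansklyanin}),~(\ref{trig-matrix}), reading off both brackets on the coordinate functions $\val_t$. First I would record that the choice $\alpha=1$, $\beta=-1$ in~(\ref{cond}) singles out, among the two-parameter family~(\ref{2parw}),~(\ref{2parb}), precisely the generator $\Poi_N^1$ described in Proposition~\ref{PSRE}; by Theorem~\ref{PSviay} the six original parameters are irrelevant beyond the combinations~(\ref{cond}), so the bracket induced on $\Rat_{k,k}$ is the one given there. Being Poisson is a statement about pullbacks, so---unlike in Theorem~\ref{PSR}---no appeal to the realization Theorem~\ref{realiz} is needed here: the target bracket is already defined on all of $\Rat_{k,k}$, and it suffices to check, for all $p,q,\bar p,\bar q$, that
\[
\{(A_N)_{pq}(t),(A_N)_{\bar p\bar q}(s)\}^1_N=\big(\{a_{pq}(t),a_{\bar p\bar q}(s)\}\big)\circ A_N,
\]
with the right-hand side supplied by~(\ref{brack_ij_ab})--(\ref{brack_ij_aj}).

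The key simplification comes from specializing to $N\in\Net_{[1,k],\varnothing,\varnothing,[k+1,2k]}$. Here $n_1=k$, the sources occupy the positions $[1,n_1]$ and the sinks the positions $[n_1+1,n]$, so in every bracket one has (source position)$\,\le n_1<\,$(sink position). This eliminates most branches of~(\ref{psre1})--(\ref{psre4}): a pair of entries ordered by their sinks falls into the third subcase $n_1<j_{\bar q}<j_q$ of~(\ref{psre1}), a pair ordered by their sources into the first subcase $i_p<i_{\bar p}\le n_1$ of~(\ref{psre2}), the self-bracket into the second subcase of~(\ref{psre3}), and the remaining vanishing configurations---possibly after one of the three reduction moves used in the proof of Proposition~\ref{PSRE}---into the third subcase of~(\ref{psre4}). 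I would then substitute the relabeling induced by $A_N=M_NW_0$: the $q$-th column of $A_N$ is the $(k+1-q)$-th column of $M_N$, i.e. the sink at position $2k+1-q$, so that $(A_N)_{pq}=M(p,2k+1-q)$ and the reversal carries the crossed entries $M_{p\bar q},M_{\bar p q}$ on the right of~(\ref{psre1}),~(\ref{psre2}) back into the $A_N$-entries $a_{p\bar q},a_{\bar p q}$.

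After this substitution the coefficient $\sigma_=(i_p,j_q,i_{\bar p},j_{\bar q})$ takes only the values $0$ and $\pm2$, and the remaining work is to collapse the rational expressions $-\tfrac{2}{t-s}\Phi_{pq}^{\bar p\bar q}$ and $-\tfrac2{t-s}\Psi_{pq}^{\bar p\bar q}$ into the numerators of~(\ref{brack_ij_ab})--(\ref{brack_ij_aj}). Concretely, the third subcase of~(\ref{psre1}) should reproduce~(\ref{brack_ij_ab}) and~(\ref{brack_ij_ib}), the first subcase of~(\ref{psre2}) should reproduce~(\ref{brack_ib_aj}) and~(\ref{brack_ij_aj}), and the vanishing subcases should reproduce the identically zero brackets; the fact that cases (i) and (ii) overlap and agree (as noted after Proposition~\ref{PSRE}) guarantees that the two routes through the diagonal and anti-diagonal families are consistent. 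A representative check: for $p<\bar p$, $q<\bar q$ one gets $\sigma_==2$, and combining the $\sigma_=$-term with the first-subcase value $\Psi=t\big(a_{p\bar q}(t)a_{\bar p q}(s)-a_{p\bar q}(s)a_{\bar p q}(t)\big)$ collapses to $\tfrac{2}{t-s}\big(t\,a_{p\bar q}(s)a_{\bar p q}(t)-s\,a_{p\bar q}(t)a_{\bar p q}(s)\big)$, which is exactly~(\ref{brack_ij_ab}).

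I expect the main obstacle to be purely bookkeeping rather than conceptual, since Proposition~\ref{PSRE} already carries the analytic weight. The delicate points are keeping the $W_0$-reversal, the clockwise labeling of the inner-circle sinks, and the cyclic order $\bmod\,n$ straight while confirming that each of the finitely many index configurations reduces to one of the four canonical subcases above---and, for the vanishing brackets, that the appropriate reduction move genuinely lands in the third subcase of~(\ref{psre4}). The algebraic collapse into the trigonometric numerators $t\pm s$ is a short computation once the correct subcase and the correct $\sigma_=$ are in hand, so the only real risk is sign and index error; I would control it by verifying one representative of each of the four families~(\ref{brack_ij_ab})--(\ref{brack_ij_aj}) and invoking antisymmetry together with the three reduction moves for all remaining orderings.
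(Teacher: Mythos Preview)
Your proposal is correct and follows essentially the same route as the paper: specialize to $\Net_{[1,k],\varnothing,\varnothing,[k+1,2k]}$, observe that this forces the third subcase of~(\ref{psre1}), the first subcase of~(\ref{psre2}), the second subcase of~(\ref{psre3}), and the third subcase of~(\ref{psre4}), and then match these, after the $W_0$-relabeling, against~(\ref{brack_ij_ab})--(\ref{brack_ij_aj}) and the vanishing brackets. You add some helpful scaffolding the paper omits (the explicit remark that realization is unnecessary here, the role of the overlap between cases (i) and (ii), and a worked representative computation), but the argument is the same.
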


\begin{remark}{\rm 
Equations~(\ref{brack_ij_ab})--(\ref{brack_ij_aj}) can be also used to define a Poisson bracket in the ``rectangular'' case of $\Rat_{k_1,k_2}$. In this case, a concise description~(\ref{ansklyanin}) of the bracket should be modified as follows:
%\begin{equation}
\[
\{A(t), A(s)\} = 
R_{k_1}(t, s) \left (A(t)\otimes A(s)\right ) -  
\left (A(t)\otimes A(s)\right )  R_{k_2}(t, s),
%\label{ansklyanin_rec}
%\end{equation}
\]
where $R_{k_i}(t, s)$ denotes the R-matrix~(\ref{trig-matrix}) acting
in $\mathbb{R}^{k_i} \otimes \mathbb{R}^{k_i}$.
}
\end{remark}

\section[Grassmannian boundary measurement map]
{Poisson properties of the Grassmannian boundary
measurement map}\label{PSongrloop}

\subsection{Grassmannian boundary measurement map and path reversal}\label{PathRev}
Let $N\in\Net_{I_1,J_1,I_2,J_2}$ be a perfect planar network in an annulus. Similarly to the case of a disk, we are going to provide a Grassmannian interpretation of the boundary measurement map defined by $N$.  To this end, we extend the boundary measurement matrix $M_N$ to a $k\times n$ matrix $\wX_N$ as follows: 

(i) the $k\times k$ submatrix of $\wX_N$ formed
by $k$ columns indexed by $I=I_1\cup I_2$ is the identity matrix $\one_k$; 

(ii) for $p\in[1,k]$ and $j=j_q\in J$, the 
$(p,j)$-entry of $\wX_N$ is $m^I_{pj}= (-1)^{s(p,j)} M_{p q}$, where $s(p,j)$ is the number of elements in $I$ lying strictly between $\min\{i_p,j\}$ and $\max\{i_p,j\}$ in the linear ordering; note that the sign is selected in such a way that the minor
$(\wX_N)_{[1,k]}^{I(i_p\to j)}$ coincides with $M_{p q}$, where $I(i_p\to j)=(I\setminus i_p)\cup j$. 

We will view $\wX_N$ as a matrix representative of an element $X_N$ in the space $LG_k(n)$ of rational functions $X\:\R\to G_k(n)$. The latter space is called the {\it space of Grassmannian loops\/}, and the 
corresponding rational map $X_N\: \EE_N\to LG_k(n)$ is called the {\it Grassmannian boundary measurement map}. 

 Given a network $N$ and a simple path $P$ from a source $b_i$ to a sink $b_j$ in $N$, we define the {\it reversal\/} of $P$ as follows: for every edge $e\in P$, change its direction and replace its weight $w_e$ by $1/w_e$;  equivalently, the modified weight $\bar w_e$ is replaced by $1/\bar w_e$. Clearly, after the reversal of $P$ all vertices preserve their color.

Denote by $N^P$ the network obtained from $N$ by the reversal of $P$, and by $R^P$ the corresponding {\it path reversal map\/} $\EE_N\to\EE_{N^P}$. Besides, put $t^P=1$ if both
endpoints of $P$ belong to the same boundary circle, and $t^P=-1$ otherwise. Define two maps from
$LG_k(n)$ to itself: $S_1$ is the identity map, while $S_{-1}$ takes any $X(t)\in LG_k(n)$ to
$X(-t)$. Our next goal is to prove that the path reversal map 
for paths not intersecting the cut
commutes with the Grassmannian boundary measurement map up to $S_{t^P}$. 

\begin{theorem}\label{pathrev}
Let $P$ be a simple path from a source $b_i$ to a sink $b_j$ in $N$ such that $M(i,j)$ does not vanish identically and $P$ does not intersect the cut. Then  
\[
S_{t^P}\circ X_N=X_{N^P}\circ R^P.
\]
\end{theorem}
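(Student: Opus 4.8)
The plan is to reduce the statement to an identity of matrix representatives and then exploit the invariance of face weights. First I record the combinatorial effect of reversal: since reversing $P$ turns the source $b_i$ into a sink and the sink $b_j$ into a source while leaving the underlying graph and all vertex colors intact, the network $N^P$ lies in $\Net_{I_1,J_1,I_2,J_2}$ with the modified pivot set $I^P=(I\setminus\{i\})\cup\{j\}$ and sink set $J^P=(J\setminus\{j\})\cup\{i\}$. To prove $S_{t^P}\circ X_N=X_{N^P}\circ R^P$ it suffices, at each specialization of the edge weights, to show that the two subspaces $X_{N^P}(t)$ and $X_N(t^P t)$ of $G_k(n)$ coincide. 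Because $M(i,j)$ does not vanish identically, the maximal minor of $\wX_{N^P}$ supported on the columns $I$ is a nonzero rational function; hence $\wX_{N^P}$ may be renormalized (by left multiplication, which does not change the point of the Grassmannian) so that its columns indexed by $I$ form $\one_k$. It then remains to match this normalized representative, column by column, with $\wX_N$ after the substitution $t\mapsto t^P t$, or equivalently to match all maximal minors up to one common scalar.

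The main tool is that both $X_N$ and $X_{N^P}$ factor through the space of face and trail weights. Since $N$ and $N^P$ share the same graph, they share the same faces and the same connecting trails, so both maps descend through the common space $\FF_N=\EE_N/\GG\cong\FF_N^f\oplus\FF_N^t$. I would first compute the action of $R^P$ on these weights. For a face $f$, every edge $e\in P\cap\partial f$ has, in $N^P$, both its direction and its weight inverted; therefore its contribution transforms as $w_e^{\gamma_e}\mapsto (1/w_e)^{-\gamma_e}=w_e^{\gamma_e}$, so that the face weight $y_f$ of \eqref{faceweight} is left unchanged. An identical cancellation shows that the weight of any connecting trail is invariant as well. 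Thus $R^P$ induces the identity on $\FF_N^f$, and likewise on $\FF_N^t$ up to an explicit transformation of the trail weight.

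To pass from this invariance to the Grassmannian identity, I would express each maximal minor of $\wX_N$ as a signed sum over families of vertex-disjoint paths (the annulus analogue of the Lindström--Gessel--Viennot expansion, following the computation of \cite{GSV3} with modified weights), so that every Plücker coordinate is determined by the face and trail weights together with the $\lambda$-grading recorded by $\ind$. Concatenating with, or deleting, the path $P$ then sets up a bijection between families for $N$ (from $I\setminus K$ to $K\setminus I$) and families for $N^P$ (from $I^P\setminus K$ to $K\setminus I^P$). Because $P$ meets neither the cut (so each of its edges carries no power of $\lambda$) and its weights are inverted along the reversal, matched families have equal weight up to a single global factor, while the winding contributed by closing the curves through the cut produces precisely the substitution $\lambda\mapsto\pm\lambda$, with the sign equal to $(-1)^{\alpha(P)}=t^P$ exactly as in Proposition~\ref{movecut}. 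I expect the genuine obstacle to be the sign and $\lambda$-power bookkeeping in this last step: one must verify that the signs $(-1)^{s(p,j)}$ built into $\wX_N$, the inversions along $P$, and the intersection numbers with the cut organize so that all dependence on the column set $K$ collapses into one overall scalar, leaving the uniform reparametrization $t\mapsto t^P t$. The invariance of face and trail weights established above is exactly what forces that $K$-independence, yielding $X_{N^P}(t)=X_N(t^P t)$ and hence the theorem.
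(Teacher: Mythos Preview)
Your observation that $R^P$ preserves all face weights is correct and useful (and indeed the paper relies on a closely related fact elsewhere), but it does not by itself give the theorem. The point is that $X_N$ and $X_{N^P}$ are \emph{different} maps from the common space $\FF_N$ to $LG_k(n)$: they are built from different source/sink data $I$ and $I^P$, hence assemble the same face and trail weights into Pl\"ucker coordinates in different ways. Knowing that $R^P$ descends to the identity on $\FF_N$ only reduces the statement to $S_{t^P}\circ X_N^\FF=X_{N^P}^\FF$, which is still the entire content of the theorem. Your final sentence conflates ``$R^P$ preserves $\FF_N$'' with ``the two Grassmannian maps agree'', and that is the gap.

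The Lindstr\"om--Gessel--Viennot style bijection you sketch is the natural attempt to close that gap, but in the annulus it is genuinely problematic. Boundary measurements are not finite sums over vertex-disjoint path families: cycles force infinite series that are only rational after resummation (this is exactly why Proposition~\ref{sfree2} and Corollary~\ref{sfree3} are needed). A bijection ``concatenate or delete $P$'' does not act on such resummed expressions, and even at the level of individual families it is not a bijection, since a path in the family may share vertices with $P$ without coinciding with it. You flag the sign and $\lambda$-power bookkeeping as the obstacle, but the deeper obstacle is that the combinatorial bijection itself is not well-defined in this setting.

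The paper bypasses both issues by a direct induction on the number of internal vertices. One reformulates the claim as the scalar identity $S_{t^P}(x_K)\,M^P(j,i)=x_K^P$ for every $K$, checks it by hand when $N$ has no internal vertices (this is where the $t^P$ sign is pinned down), and for the inductive step splits the first internal vertex on $P$ into two boundary vertices. Explicit formulas (analogous to Lemma~\ref{recalcan}) express $x_K$, $x_K^P$ and $M^P(j,i)$ in terms of the corresponding quantities for the smaller network, and the identity follows from the induction hypothesis. No LGV expansion and no appeal to face-weight invariance is needed at this stage.
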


\proof Let $I$ be the index set of the sources in $N$. The statement of the Theorem is equivalent to the equality $S_{t^P}(x_K)x_I^P=x_K^P$ 
for any subset $K\subset [1,n]$ of size $k$. Here and in what follows the superscript $P$ means that the corresponding value is related to the network $N^P$. 
%As it was explained above, 
The signs of the elements $m^I_{pj}$ are chosen in such a way that
$x_I^P=M^P(j,i)$, so we have to prove
\begin{equation}\label{pathreveq}
S_{t^P}(x_K)M^P(j,i)=x_K^P.
\end{equation}

The proof  of~(\ref{pathreveq}) relies on the induction on the number of inner vertices in $N$.

Let us start with the case when $N$ does not have inner vertices. In this case it suffices to 
prove~\eqref{pathreveq} with $K=I(i_r\to l)$ for all edges $e=(b_{i_r},b_l)$. 
Assume first that the intersection 
index of each edge with the cut $\rho$ equals $\pm1$ or $0$; consequently, after a suitable isotopy, 
each edge either intersects $\rho$  exactly once, or does not intersect it at all. Let $e^*=(b_i,b_j)$ 
be the edge to be reversed, and hence $\ind(e^*)=0$. If both $b_i$ and $b_j$ belong to the same 
boundary circle, then exactly the following two cases are prohibited: 
\begin{align*}
&\min\{i_r,l\}<\min\{i,j\}<\max\{i_r,l\}<\max\{i,j\},\\ 
&\min\{i,j\}<\min\{i_r,l\}<\max\{i,j\}<\max\{i_r,l\}. 
\end{align*}
Consequently,  
reversing $e^*$ does not change $(-1)^{s(r,l)}$, which corresponds to the map $S_1$. If $b_i$ and $b_j$ belong 
to distinct boundary circles, then the above two cases  are prohibited whenever $e$ 
does not intersect the cut. If $e$ intersects the cut then the above two cases are the only possibilities.
Consequently, reversing $e^*$ does not change $(-1)^{s(r,l)}$ for the edges not intersecting the cut and reverses 
it for the edges intersecting the cut, which corresponds to the map $S_{-1}$. 

It remains to lift the restriction on the intersection index of edges with $\rho$. If there exists an edge $e'$ such that $|\ind (e')|>1$ then the endpoints of $e'$ belong to distinct boundary circles, and for any other edge with the endpoints on distinct boundary circles, the intersection index with $\rho$ does not vanish. Consequently, only edges with the endpoints on the same boundary circle can be reversed, and the above reasoning applies, which leads to $S_1$.

Let now $N$ have inner vertices, and assume that the first inner vertex $v$ on $P$ is white. Denote by $e$ and $e'$ the first two edges of $P$, and by $e''$ the third edge incident to $v$. 
In what follows we assume without loss of generality that the cut in $N$ does not intersect $e$.
To find $M^P(j,i)$ consider the network  $\widehat{N^P}$ that is related to $N^P$ exactly in the same way as the network $\widehat{N}$ defined immediately before Lemma~\ref{recalcan} is related to $N$. Similarly to the first relation in Lemma~\ref{recalcan}, we find
\[
M^P(j,i)=\frac{ w^P_{e} w^P_{e'}\widehat{M^P}(j,j_v)}
{1+ w^P_{e'} w^P_{e''}\widehat{M^P}(i_v,j_v)}.
\]
Taking into account that $w^P_{e}=1/w_{e}$, $w^P_{e'}=1/w_{e'}$, $w^P_{e''}=w_{e''}$, we finally get
\begin{equation}\label{xipwhite}
M^P(j,i)=
\frac{\widehat{M^P}(j,j_v)}{w_{e}w_{e'}+w_{e}w_{e''} \widehat{M^P}(i_v,j_v)}.
\end{equation}

To find $x_K^P$ we proceed as follows.

\begin{lemma}\label{xkpwhite} 
Let the first inner vertex $v$ of $P$ be white, then
\[
x_K^P=\begin{cases} 
\dfrac{w_{e'}(\widehat{x^P})_{K\cup i_v}+w_{e''}(\widehat{x^P})_{K\cup j_v}}
{w_{e'}+w_{e''}\widehat{M^P}(i_v,j_v)}
\qquad &\text{if $i\notin K$}\\
\\
\dfrac{(\widehat{x^P})_{K(i\to j_v)\cup i_v}} 
{w_{e}w_{e'}+w_{e}w_{e''}\widehat{M^P}(i_v,j_v)}\qquad&\text{if $i\in K$}.
\end{cases}
\]
\end{lemma}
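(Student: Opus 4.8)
The plan is to prove Lemma~\ref{xkpwhite} in exact parallel with the derivation of~(\ref{xipwhite}): to express every maximal minor $x_K^P$ of the $k\times n$ matrix $\wX_{N^P}$ through the maximal minors $\widehat{x^P}$ of the matrix $\widehat{\wX_{N^P}}$ attached to the split network $\widehat{N^P}$. Since $\widehat{N^P}$ has one internal vertex fewer than $N^P$, a relation of this form is precisely what feeds the induction on the number of internal vertices in the proof of Theorem~\ref{pathrev}: combined with~(\ref{xipwhite}) and the inductive hypothesis applied to $\widehat{N^P}$, it yields~(\ref{pathreveq}).

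First I would fix the local picture at the white vertex $v$. In $N^P$ the edge $e'$ is the unique edge entering $v$, while $e$ (pointing at the sink $b_i$) and $e''$ are the two edges leaving $v$; the reversal rules give $w^P_e=1/w_e$, $w^P_{e'}=1/w_{e'}$ and $w^P_{e''}=w_{e''}$. Applying the splitting construction that precedes Lemma~\ref{recalcan} to $v$ and the adjacent sink $b_i$ of $N^P$, one deletes $b_i$ and $e$, turns $v$ into a new source $b_{i_v}$ carrying the outgoing edge $e''$ and a new sink $b_{j_v}$ carrying the incoming edge $e'$, with $b_{i_v},b_{j_v}$ inserted in the order between $i-1$ and $i+1$. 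The combinatorial heart of the matter is that every path of $N^P$ meets $v$ in one of a few prescribed ways, and that the cycles based at $v$ factor out as the geometric series responsible for the denominator $1+w^P_{e'}w^P_{e''}\widehat{M^P}(i_v,j_v)$. This is exactly Lemma~\ref{recalcan}, and applying it to the pair $(N^P,\widehat{N^P})$ rewrites each entry of $\wX_{N^P}$ as a rational combination of entries of $\widehat{\wX_{N^P}}$ with coefficients assembled from $w_e,w_{e'},w_{e''}$ over this common denominator.

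The main step is then to lift these entrywise relations to $k\times k$ minors. I would substitute the column relations into the determinant defining $x_K^P$ and expand multilinearly in the single affected column. If $i\notin K$, column $i$ is absent; the contributions supported at $v$ separate into an $i_v$-part and a $j_v$-part, weighted respectively by $w_{e'}$ and $w_{e''}$, and the expansion collapses to $w_{e'}(\widehat{x^P})_{K\cup i_v}+w_{e''}(\widehat{x^P})_{K\cup j_v}$ over $w_{e'}+w_{e''}\widehat{M^P}(i_v,j_v)$, the latter being the $\widehat{N^P}$-denominator of Lemma~\ref{recalcan} cleared by $w_{e'}$. Here one must check that the cross terms, in which both $i_v$ and $j_v$ would be selected, vanish because the associated $(k+1)$-subset repeats a column. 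If $i\in K$, column $i$ is present and is reconstituted from the $j_v$-column of $\widehat{\wX_{N^P}}$ together with the extra factor $w_e$ carried by the traversed edge $e$; this produces the single term $(\widehat{x^P})_{K(i\to j_v)\cup i_v}$ over $w_ew_{e'}+w_ew_{e''}\widehat{M^P}(i_v,j_v)$, whose denominator already appeared in~(\ref{xipwhite}).

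I expect the sign bookkeeping to be the principal obstacle. The entries $m^I_{pj}=(-1)^{s(p,j)}M_{pq}$ carry signs that count index elements between $i_p$ and $j$ in the linear order, and inserting the pair $b_{i_v},b_{j_v}$ next to $b_i$ shifts these counts; one has to verify that the signs attached to the $i_v$- and $j_v$-columns of $\widehat{\wX_{N^P}}$ combine with the signs built into $x_K^P$ so that precisely the two displayed terms survive, with coefficients $+w_{e'}$ and $+w_{e''}$ (respectively the single term with coefficient $+1$), all remaining contributions cancelling. A secondary delicate point is the case $i\in K$, where a column indexed by a \emph{sink} of $N^P$ is used: one must track which $(k+1)$-subset of the enlarged index set $[1,n+1]$ corresponds to $K$, and confirm that it is exactly $K(i\to j_v)\cup i_v$. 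Once these identifications are in place the two cases of the lemma follow, completing the white-vertex step of the induction.
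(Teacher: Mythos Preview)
Your strategy is the same as the paper's: relate the entries of $\wX_{N^P}$ to those of $\wX_{\widehat{N^P}}$ via the analogue of Lemma~\ref{recalcan} and then pass to $k\times k$ minors, with the sign conventions $(-1)^{s(p,j)}$ doing the work of making the determinant expansion collapse. The paper's short proof differs only in how it organizes that sign analysis: rather than expanding multilinearly and checking that cross terms cancel, it observes directly that the $\pm$ sign in the second formula of Lemma~\ref{recalcan} combines with $(-1)^{s(p,j)}$ so that every submatrix of $\wX_{N^P}$ equals the corresponding submatrix of $\wX_{\widehat{N^P}}$ plus a submatrix of a single rank-$1$ matrix (a tensor product of one column and one row). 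Once this is seen, the two displayed formulas follow from a one-step cofactor expansion. Your ``cross terms vanish'' is exactly the rank-$1$ phenomenon, stated less sharply.

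One point in your write-up to correct: it is not true that only a single column of $\wX_{N^P}$ is affected by the splitting. The second formula of Lemma~\ref{recalcan} perturbs \emph{every} entry $M^P(l,l')$ with $l'\ne i$, so every column of $\wX_{N^P}$ changes; what makes the computation tractable is that all these perturbations together form a rank-$1$ matrix. If you carry out your expansion as described you will find many more terms than you anticipate, and their cancellation is precisely the rank-$1$ observation. I would recommend reformulating your step~3 around that observation rather than around a ``single affected column''.
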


\proof The proof utilizes explicit formulas (similar to those provided by Lem\-ma~\ref{recalcan}) that relate boundary measurements in the networks $N^P$ and $\widehat{N^P}$. What is important, the sign $\pm$ in the second formula in Lemma~\ref{recalcan} and the sign $(-1)^{s(p,j)}$ defined at the beginning of this Section interplay in such a way that any submatrix of $\wX_{N^P}$ is the sum of the corresponding submatrix of $\wX_{\widehat{N^P}}$ and a submatrix of the rank~$1$ matrix that is equal to the tensor product of the $i$th column of $\wX_{N^P}$ and  the $j_v$th row of $\wX_{\widehat{N^P}}$.
\endproof

To find $x_K$, 
%we consider the network $\widetilde{N}$ defined as in the proof of Proposition~\ref{sfree}.
create a new network $\wN$ by
deleting $b_i$ and the edge $e$ from $G$, splitting $v$ into
$2$ sources $b_{i'_v}, b_{i''_v}$ (so that either $i-1< i'_v< i''_v< i+1$ or $i-1< i''_v< i'_v< i+1$) and replacing
the edges $e'=(v,v')$ and $e''=(v,v'')$ by $(b_{i'_v},v')$ and
$(b_{i''_v},v'')$, respectively. 
%(see Figure~\ref{fig:whitefork}).

\begin{lemma}\label{xkwhite} 
Let the first inner vertex $v$ of $P$ be white, then
\[
x_K=\begin{cases} 
w_{e}w_{e'} \widetilde{x}_{K\cup i''_v}+w_{e}w_{e''}\widetilde{x}_{K\cup i'_v}
\qquad&\text{if $i\notin K$}\\
\\
\widetilde{x}_{K(i\to i'_v)\cup i''_v} \qquad&\text{if $i\in K$}.
\end{cases}
\]
\end{lemma}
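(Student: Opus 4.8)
The plan is to read the vertex split of Lemma~\ref{recalcan} "from the source side" and then lift the resulting measurement identities to the matrix representatives. The key geometric observation is that the white vertex $v$ has a \emph{unique} incoming edge, namely $e=(b_i,v)$, whose tail is the source $b_i$. Hence every path that visits $v$ must enter through $e$, so it can visit $v$ at most once and only if it issues from $b_i$. Consequently, for every other source $b_{i_r}$ with $r\ne p$ (where $i=i_p$), no path from $b_{i_r}$ ever meets $v$, and the corresponding measurements are literally unchanged under the split, $M_N(i_r,l)=M_{\wN}(i_r,l)$. For $b_i$ itself, a path first traverses $e$ and then leaves $v$ through $e'$ or $e''$; giving the two new edges $(b_{i'_v},v')$ and $(b_{i''_v},v'')$ weight $1$, the path sums from $v'$ and $v''$ are exactly $M_{\wN}(i'_v,l)$ and $M_{\wN}(i''_v,l)$, so that
\[
M_N(i,l)=w_e\big(w_{e'}M_{\wN}(i'_v,l)+w_{e''}M_{\wN}(i''_v,l)\big).
\]
This is the analogue, for the white-source split, of the first relation in Lemma~\ref{recalcan}.

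Next I would lift these identities to the matrices $\wX_N$ and $\wX_{\wN}$. All rows of $\wX_N$ indexed by sources $i_r\ne i$ coincide, up to column-dependent signs coming from the counting exponents $(-1)^{s(\cdot,\cdot)}$ in the definition of $\wX$, with the matching rows of $\wX_{\wN}$, while the row $p$ of $\wX_N$ equals $w_e$ times the weighted sum $w_{e'}(\text{row }i'_v)+w_{e''}(\text{row }i''_v)$ of the two new source rows of $\wX_{\wN}$, restricted to the shared columns. The two new source columns $i'_v,i''_v$ of $\wX_{\wN}$ are the standard basis vectors $\e_{i'_v},\e_{i''_v}$. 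I would then evaluate $x_K=\det\big((\wX_N)_{[1,k]}^{K}\big)$ by multilinearity in row $p$ together with Laplace expansion along these standard-basis columns.

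For $i\notin K$, expanding the determinant by the two summands of row $p$ yields one term proportional to the minor of $\wX_{\wN}$ on columns $K$ and rows $(I\setminus i)\cup\{i'_v\}$, and one on rows $(I\setminus i)\cup\{i''_v\}$; Laplace expansion of $\widetilde x_{K\cup i''_v}$ and $\widetilde x_{K\cup i'_v}$ along the columns $i''_v$ and $i'_v$ produces precisely these two minors. The cross-pairing in the statement — $w_{e'}$ with $\widetilde x_{K\cup i''_v}$ and $w_{e''}$ with $\widetilde x_{K\cup i'_v}$ — is exactly the fact that the $i'_v$-row, carrying the $w_{e'}$-branch toward $v'$, survives in the minor that \emph{omits} column $i'_v$. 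For $i\in K$, column $i$ of $\wX_N$ is the identity column $\e_p$; expanding along it reduces $x_K$ to the minor on rows $[1,k]\setminus p$ and columns $K\setminus i$, which agrees with the minor of $\wX_{\wN}$ on rows $I\setminus i$ and columns $K\setminus i$, and the latter equals $\widetilde x_{K(i\to i'_v)\cup i''_v}$ after expanding along the two standard-basis columns $i'_v,i''_v$ — with no edge-weight factor, since the source column carries none.

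The main obstacle is the sign bookkeeping, exactly as flagged in the proof of Lemma~\ref{xkpwhite}. Splitting the single source $i$ into the adjacent pair $i'_v,i''_v$ flips the exponents $(-1)^{s(\cdot,\cdot)}$ defining $\wX_N$ and $\wX_{\wN}$ precisely on those index intervals that straddle the location of $i$, and these twists must be shown to cancel against the transposition signs generated by the Laplace expansions along the two inserted columns, leaving the clean all-$+$ signs of the statement. I would pin this down by fixing the cyclic order $i-1<i'_v<i''_v<i+1$, checking that every sign collapses to $+1$ in both displayed cases, and then noting that the opposite admissible order $i-1<i''_v<i'_v<i+1$ merely interchanges the roles of $i'_v$ and $i''_v$ and is treated symmetrically.
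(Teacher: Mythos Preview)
Your proposal is correct and is precisely the ``straightforward computation'' the paper alludes to without detail: the unique incoming edge at the white vertex $v$ forces every path through $v$ to originate at $b_i$, which yields the row identities you state for $\wX_N$ versus $\wX_{\wN}$, and the Pl\"ucker formulas then follow by multilinearity in row $p$ together with Laplace expansion along the identity columns $i'_v$, $i''_v$ of $\wX_{\wN}$. Your identification of the sign bookkeeping as the only nontrivial point, and your plan to verify it by fixing the order $i-1<i'_v<i''_v<i+1$ and checking that the $(-1)^{s(\cdot,\cdot)}$ twists cancel the Laplace signs, is exactly what is needed.
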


\proof The proof is a straightforward computation.
\endproof

By~(\ref{xipwhite}) and Lemmas~\ref{xkpwhite} and~\ref{xkwhite}, relation (\ref{pathreveq}) boils down to 
\[
S_{t^P}(w_{e'}\widetilde{x}_{K\cup i''_v}+w_{e''}\widetilde{x}_{K\cup i'_v})
\widehat{M^P}(j,j_v)=
w_{e'}(\widehat{x^P})_{K\cup i_v}+w_{e''}(\widehat{x^P})_{K\cup j_v}
\]
for $i\notin K$ and
\[
S_{t^P}(\widetilde{x}_{K(i\to i'_v)\cup i''_v})\widehat{M^P}(j,j_v)=
(\widehat{x^P})_{K(i\to j_v)\cup i_v}
\]
for $i\in K$. To prove these two equalities, we identify $b_{i'_v}$ with $b_{j_v}$ and $b_{i''_v}$ with $b_{i_v}$. Under this identification we have $\widetilde{N}^{\widetilde{P}}=\widehat{N^P}$, where $\widetilde{P}$ is the path from $b_{i'_v}$ to $b_j$ in $\widetilde{N}$ induced by $P$. Observe that $\widetilde{N}$ has less inner vertices than $N$, and that the index set of the sources in $\widetilde{N}$ is $I(i\to i'_v)\cup i''_v$. Therefore, by the induction hypothesis, 
\begin{equation}\label{indhyp}
S_{t^{\widetilde{P}}}(\widetilde{x}_{\widetilde{K}}) \widetilde{x}^{\widetilde{P}}_{I(i\to i'_v)\cup i''_v}=\widetilde{x}_{\widetilde{K}}^{\widetilde{P}}
\end{equation} 
for any $\widetilde{K}$ of size $k+1$. Besides, $\widetilde{x}^{\widetilde{P}}_{I(i\to i'_v)\cup i''_v}=
\widetilde{M}^{\widetilde{P}}(j,i'_v)=\widehat{M^P}(j,j_v)$ and $t^P=t^{\widetilde{P}}$.
Therefore, 
using~(\ref{indhyp}) for $\widetilde{K}= K\cup i'_v=K\cup j_v$, 
$\widetilde{K}= K\cup i''_v=K\cup i_v$ and $\widetilde{K}=K(i\to i'_v)\cup i''_v=K(i\to j_v)\cup i_v$ we get both equalities above.

Assume now that the first inner vertex $v$ on $P$ is black.  Denote by $e$ and $e'$ the first two edges of $P$, and by $e''$ the third edge incident to $v$. 
To find $M^P(j,i)$, consider the network  $\widetilde{N^P}$ similar to the one defined immediately before Lemma~\ref{xkwhite}, the difference being that the two new boundary vertices $j'_v$ and $j''_v$ are sinks rather than sources. Clearly,
\begin{equation}\label{xipblack}
M^P(j,i)=
\frac1{w_{e}w_{e'}}\left(\widetilde{M^P}(j,j'_v)+w_{e''}w_{e'}
\widetilde{M^P}(j,j''_v)\right).
\end{equation}

To find $x_K^P$ we proceed as follows.

\begin{lemma}\label{xkpblack} 
Let the first inner vertex $v$ of $P$ be black, then
\[
x_K^P=\begin{cases} (\widetilde{x^P})_K
\qquad\qquad\qquad&\text{if $i\notin K$}\\
\\
\dfrac1{w_{e}w_{e'}}\left((\widetilde{x^P})_{K(i\to j'_v)}+
w_{e''}w_{e'}(\widetilde{x^P})_{K(i\to j''_v)}\right)\qquad&\text{if $i\in K$}.
\end{cases}
\]
\end{lemma}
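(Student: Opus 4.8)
The plan is to reduce the asserted identity to a single linear relation among the columns of the extended matrix $\wX_{N^P}$ and then to invoke multilinearity of the maximal minors. First I would record the local structure of the black vertex $v$ inside $N^P$. Since colors are preserved under reversal, $v$ is black in $N^P$ as well, its unique \emph{outgoing} edge being the reversed copy of $e$, directed from $v$ into the (now) sink $b_i$, while the reversed copy of $e'$ and the untouched edge $e''$ are its two incoming edges. Consequently every path of $N^P$ that visits $v$ is forced to leave it along $e$ and to terminate at $b_i$. In particular, no path ending at a sink $b_l$ with $l\neq i$ ever passes through $v$, whereas every path ending at $b_i$ reaches $v$ through exactly one of $e'$ or $e''$.

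This dichotomy is precisely what the passage to $\widetilde{N^P}$ encodes: $v$ is split into the two sinks $j'_v,j''_v$ (carrying the edges obtained from $e'$ and $e''$ respectively), and $b_i$ together with $e$ is deleted. Hence for every source $b_{i_p}$ of $N^P$ the measurement to a sink $b_l$ with $l\neq i$ is unchanged, $M^P(i_p,l)=\widetilde{M^P}(i_p,l)$, while the measurement to $b_i$ factors through the last edge used to enter $v$. This is exactly formula~(\ref{xipblack}), whose derivation is identical with $b_{i_p}$ in place of $b_j$, so that
\[
M^P(i_p,i)=\frac{1}{w_{e}w_{e'}}\widetilde{M^P}(i_p,j'_v)+\frac{w_{e''}}{w_{e}}\widetilde{M^P}(i_p,j''_v)
\]
for every source index $i_p\in I^P$.

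Next I would translate these measurement identities into relations between the columns of $\wX_{N^P}$ and $\wX_{\widetilde{N^P}}$, where the only genuine work lies in the sign bookkeeping, exactly of the kind invoked in the proof of Lemma~\ref{xkpwhite}. Splitting $v$ and deleting $b_i$ leaves the source set $I^P$ untouched, so the exponents $s(p,\cdot)$ counting sources lying between two indices are unaffected for every sink index $l\neq i$; thus the identity columns and all columns indexed by $l\neq i$ of $\wX_{N^P}$ and $\wX_{\widetilde{N^P}}$ coincide verbatim. Since $j'_v,j''_v$ are inserted into the gap vacated by $i$ (both lying strictly between positions $i-1$ and $i+1$) and are again sinks, one has $s(p,i)=s(p,j'_v)=s(p,j''_v)$ for all $p$; carrying this common sign through the displayed measurement identity gives the column relation
\[
\operatorname{col}_i\bigl(\wX_{N^P}\bigr)=\frac{1}{w_{e}w_{e'}}\operatorname{col}_{j'_v}\bigl(\wX_{\widetilde{N^P}}\bigr)+\frac{w_{e''}}{w_{e}}\operatorname{col}_{j''_v}\bigl(\wX_{\widetilde{N^P}}\bigr),
\]
all remaining columns being shared.

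Finally I would conclude by multilinearity of the Pl\"ucker coordinates in the column indexed by $i$. If $i\notin K$, the minor $x_K^P$ uses only shared columns, so $x_K^P=(\widetilde{x^P})_K$. If $i\in K$, expanding the determinant in that column and using that $j'_v,j''_v$ occupy the slot formerly held by $i$ (so that no reordering sign is produced) yields $x_K^P=\frac{1}{w_{e}w_{e'}}\bigl((\widetilde{x^P})_{K(i\to j'_v)}+w_{e''}w_{e'}(\widetilde{x^P})_{K(i\to j''_v)}\bigr)$, which is the asserted formula. I expect the most delicate point to be the sign matching $s(p,i)=s(p,j'_v)=s(p,j''_v)$ together with the verification that the substitutions $i\to j'_v$ and $i\to j''_v$ introduce no permutation sign in the minors; once these are in place, the rest is the clean multilinear expansion above.
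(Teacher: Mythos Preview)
Your proposal is correct and is exactly the kind of computation the paper has in mind. The paper's own proof consists of the single sentence ``The proof is a straightforward computation,'' so you have supplied the details rather than deviated from them: the reduction to a column identity between $\wX_{N^P}$ and $\wX_{\widetilde{N^P}}$, together with the observation that the source set $I^P$ is unchanged so that $s(p,i)=s(p,j'_v)=s(p,j''_v)$, followed by multilinear expansion, is precisely the straightforward route (and is the black-vertex analogue of the rank-one matrix relation the paper sketches for Lemma~\ref{xkpwhite}).
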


\proof The proof is a straightforward computation.
\endproof

To find $x_K$ we consider the network $\widehat{N}$ defined immediately before 
Lemma~\ref{recalcan}.

\begin{lemma}\label{xkblack} 
Let the first inner vertex $v$ of $P$ be black, then
\[
x_K=\begin{cases} 
\dfrac{w_{e}w_{e'}\widehat{x}_K}{1+w_{e''}w_{e'} \widehat{M}(i_v,j_v)}
\qquad&\text{if $i\notin K$}\\
\\
\dfrac{\widehat{x}_{K(i\to i_v)}+w_{e''}w_{e'}\widehat{x}_{K(i\to j_v)}}
{1+w_{e''}w_{e'}\widehat{M}(i_v,j_v)}
\qquad&\text{if $i\in K$}.
\end{cases}
\]
\end{lemma}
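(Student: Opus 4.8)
The plan is to reduce the statement to a determinant identity for the boundary measurement matrices of $N$ and $\hN$, and then to exploit the fact that the recalculation rules of Lemma~\ref{recalcan} couple the two matrices only through the single new source--sink pair $(b_{i_v},b_{j_v})$.

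First I would record the Stiefel-coordinate description of the Plücker coordinates: by the very definition (i)--(ii) of the matrix representative $\wX_N$, expanding a maximal minor along the identity columns indexed by $I$ shows that for $K=(I\setminus S)\cup T$ with $S\subset I$, $T\subset J$, $|S|=|T|$, one has $x_K=\pm\det M_S^T$, where $M_S^T$ denotes the submatrix of the boundary measurement matrix on the rows $S$ and the columns $T$; the analogous formula holds for $\hN$ with the enlarged source set $\hat I=(I\setminus i)\cup\{i_v\}$ and the extra sink $j_v$. Thus it suffices to track how the entries $M(a,\ell)$ of $N$ are built from the entries $\hM(a,\ell)$ of $\hN$, which is exactly Lemma~\ref{recalcan}: writing $c=w_{e''}w_{e'}$ and $D=1+c\,\hM(i_v,j_v)$, the $b_i$-row of $M$ equals $\frac{w_e w_{e'}}{D}$ times the $b_{i_v}$-row of $\hM$, while for every other source $b_a$ the $b_a$-row of $M$ is the $b_a$-row of $\hM$ plus a rank-one correction $\pm\frac{c}{D}\hM(a,j_v)\,\hM(i_v,\cdot)$ that is again proportional to the single vector $\hM(i_v,\cdot)$.

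The computation then proceeds by expanding $\det M_S^T$ multilinearly in its rows. Since every correction is proportional to the one vector $\hM(i_v,\cdot)$, any expansion term using the correction in two distinct rows has two proportional rows and vanishes, so at most one correction survives. If $i\notin K$, the row $b_i$ already lies in $S$ and is itself proportional to $\hM(i_v,\cdot)$; hence all corrections in the remaining rows drop out and a single term remains, giving $\det M_S^T=\frac{w_e w_{e'}}{D}\det\hM_{(S\setminus i)\cup i_v}^{\,T}=\frac{w_e w_{e'}}{D}\,\widehat x_K$, which is the first case. If $i\in K$, the row $b_i$ is absent from $S$, and the surviving terms are $\det\hM_S^T$ together with the single-correction sum $\frac{c}{D}\sum_{a\in S}(\pm)\hM(a,j_v)\det\big[\hM_S^T\text{ with row }a\text{ replaced by }\hM(i_v,\cdot)\big]$. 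I would recognize this sum as the Laplace expansion along the first column of the bordered determinant $\det\hM_{\{i_v\}\cup S}^{\{j_v\}\cup T}$; using the index identities $(\hat I\setminus S)\cup T=K(i\to i_v)$ and $(\hat I\setminus(\{i_v\}\cup S))\cup(\{j_v\}\cup T)=K(i\to j_v)$, everything is expressed through $\widehat x_{K(i\to i_v)}$ and $\widehat x_{K(i\to j_v)}$. The coefficient of $\widehat x_{K(i\to i_v)}$ is $1-\frac{c}{D}\hM(i_v,j_v)=\frac1D$ precisely because $D=1+c\,\hM(i_v,j_v)$, and the coefficient of $\widehat x_{K(i\to j_v)}$ is $\frac cD$, which is the second case.

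The routine but genuinely delicate part---hidden in the phrase ``straightforward computation''---is the sign bookkeeping: one must check that the signs $(-1)^{s(p,j)}$ built into the entries of $\wX_N$ and $\wX_{\hN}$ combine with the alternating sign $\pm$ in the second formula of Lemma~\ref{recalcan} and with the cofactor signs of the bordered determinant so that the proportionality collapses leave exactly the coefficients $\frac{w_e w_{e'}}{D}$, $\frac1D$ and $\frac cD$ with the stated positive signs; this is the same interplay of signs already invoked in the proof of Lemma~\ref{xkpwhite}. Once the signs are pinned down, the two index-set identifications and the determinant identities above give the lemma.
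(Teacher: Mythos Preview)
Your proposal is correct and follows essentially the same route as the paper: the paper's proof is ``similar to the proof of Lemma~\ref{xkpwhite}'', which amounts precisely to using the recalculation formulas of Lemma~\ref{recalcan}, observing that the passage from $\hN$ to $N$ modifies the relevant submatrices of $\wX$ by a rank-one perturbation, and then tracking the resulting determinant identity together with the interplay of the $\pm$ sign in Lemma~\ref{recalcan} and the $(-1)^{s(p,j)}$ sign. Your write-up simply spells out in detail the multilinear expansion and bordered-determinant identification that the paper leaves implicit.
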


\proof The proof is similar to the proof of Lemma~\ref{xkpwhite}.
\endproof

By~(\ref{xipblack}) and Lemmas~\ref{xkpblack} and~\ref{xkblack}, relation (\ref{pathreveq}) boils down to 
\[
S_{t^P}(\widehat{x}_K)\widetilde{M^P}(j,j'_v)\left(1+w_{e''}w_{e'}
\dfrac{\widetilde{M^P}(j,j''_v)}{\widetilde{M^P}(j,j'_v)}\right)
=(\widetilde{x^P})_KS_{t^P}\left(1+w_{e''}w_{e'}\widehat{M}(i_v,j_v)\right)
\]
for $i\notin K$ and
\begin{multline*}
S_{t^P}\left(\widehat{x}_{K(i\to i_v)}+w_{e''}w_{e'}\widehat{x}_{K(i\to j_v)}\right)
\widetilde{M^P}(j,j'_v)\left(1+w_{e''}w_{e'}
\dfrac{\widetilde{M^P}(j,j''_v)}{\widetilde{M^P}(j,j'_v)}\right)\\
=\left((\widetilde{x^P})_{K(i\to j'_v)}+
w_{e''}w_{e'}(\widetilde{x^P})_{K(i\to j''_v)}\right)S_{t^P}\left(1+w_{e''}w_{e'} \widehat{M}(i_v,j_v)\right)
\end{multline*}
for $i\in K$. To prove these two equalities, we identify $b_{j'_v}$ with $b_{i_v}$ and $b_{j''_v}$ with $b_{j_v}$. Under this identification we have $\widehat{N}^{\widehat{P}}=\widetilde{N^P}$, where $\widehat{P}$ is the path from $b_{i_v}$ to $b_j$ in $\widehat{N}$ induced by $P$. Observe that $\widehat{N}$ has less inner vertices than $N$, and that the index set of the sources in $\widehat{N}$ is $I(i\to i_v)$. Therefore, by the induction hypothesis, 
\begin{equation}\label{indhypb}
S_{t^P}(\widehat{x}_{\widehat{K}}) \widehat{x}^{\widehat{P}}_{I(i\to i_v)}=\widehat{x}_{\widehat{K}}^{\widehat{P}}
\end{equation} 
for any $\widehat{K}$ of size $k$. Taking into account that  $\widehat{x}_{I(i\to j_v)}=\widehat{M}(i_v,j_v)$,
\[
\widehat{x}^{\widehat{P}}_{I(i\to i_v)}=
\widehat{M}^{\widehat{P}}(j,j_v)=\widetilde{M^P}(j,j''_v), \qquad 
\widehat{x}^{\widehat{P}}_{I(i\to j_v)}=
\widehat{M}^{\widehat{P}}(j,i_v)=\widetilde{M^P}(j,j'_v),
\]
and using~(\ref{indhypb}) for $\widehat{K}= K$, 
$\widehat{K}= K(i\to i_v)=K(i\to j'_v)$, $\widehat{K}=K(i\to j_v)=K(i\to j''_v)$ and $\widehat{K}=I(i\to j_v)$ we get both equalities above.
\endproof

\begin{remark} Theorem~\ref{pathrev} is proved in~\cite{Postnikov} for networks in a disk. Observe that in this case $t^P$ vanishes identically, and hence $X_N$ and $R^P$ always commute.
\end{remark}

\subsection{Induced Poisson structures on $LG_k(n)$}\label{indpoissonl} 
Consider a subspace $LG_k^I(n)\subset LG_k(n)$ consisting of all $X\in LG_k(n)$ such that the Pl\"ucker coordinate $x_I$ does not vanish identically; clearly, $X_N\in LG_k^I(n)$. Therefore, we can identify $LG_k^I(n)$ with the space $\Rat_{k,m}$ equipped with the 2-parametric family of Poisson brackets $\Poi_{I_1,J_1,I_2,J_2}$.
 
The following result says that for any fixed $n_1=|I_1|+|J_1|$, 
the families of brackets $\Poi_{I_1,J_1,I_2,J_2}$ on different subspaces $LG^I_k(n)$ can be glued together to form the unique 2-parametric family of Poisson brackets on $LG_k(n)$ that makes all maps $X_N$ Poisson.

\begin{theorem}\label{PSLGr}
{\rm(i)} For any fixed $n_1$, $0\leq n_1\leq n$, and any choice of parameters $\alpha$ and $\beta$ there exists a unique Poisson bracket $\P^{n_1}_{\alpha,\beta}$ on $LG_k(n)$ such that for any network $N$ with $n_1$ boundary vertices on the outer circle, $n-n_1$ boundary vertices on the inner circle, $k$ sources, $n-k$ sinks and weights defined by~(\ref{connect}),  the map $X_N\: (\R\setminus 0)^{|E|}\to LG_k(n)$ is Poisson provided the parameters $\alpha_{ij}$ and $\beta_{ij}$ defining the bracket $\{\cdot,\cdot\}_N$ on $(\R\setminus 0)^{|E|}$ satisfy relations~(\ref{cond}).

{\rm(ii)} For any $I\subset [1,n]$, $|I|=k$,  and any $n_1$, $0\leq n_1\leq n$, the restriction of $\P^{n_1}_{\alpha,\beta}$ to the subspace $LG^I_k(n)$ coincides with the bracket $\{\cdot,\cdot\}_{I_1,J_1,I_2,J_2}$ 
with $I_1=I\cap [1,n_1]$, $J_1=[1,n_1]\setminus I_1$, $I_2=I\setminus I_1$, $J_2=[n_1+1,n]\setminus I_2$.
\end{theorem}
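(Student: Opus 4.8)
The plan is to leverage the explicit families $\Poi_{I_1,J_1,I_2,J_2}$ already constructed in Theorem~\ref{PSR}, together with the path reversal relation of Theorem~\ref{pathrev}, to glue local brackets on the charts $LG^I_k(n)$ into a single global bracket on $LG_k(n)$. First I would observe that the charts $\{LG^I_k(n)\}_{|I|=k}$ cover $LG_k(n)$, and that on each chart $LG^I_k(n)\cong\Rat_{k,m}$ we already have, by Theorem~\ref{PSR} and Theorem~\ref{PSviay}, a well-defined $2$-parametric Poisson bracket $\{\cdot,\cdot\}_{I_1,J_1,I_2,J_2}$ depending only on $(\alpha,\beta)$ and on the distribution $(I_1,J_1,I_2,J_2)$. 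So statement (ii) \emph{defines} the candidate bracket chart-by-chart, and the real content of (i) is that these charts are mutually compatible: on each overlap $LG^I_k(n)\cap LG^{I'}_k(n)$ the two prescribed brackets must agree, so that they patch to a globally defined Poisson structure $\P^{n_1}_{\alpha,\beta}$. Uniqueness is then immediate, since any bracket making all $X_N$ Poisson must restrict on each chart to the bracket induced by the boundary measurement maps, which is exactly $\{\cdot,\cdot\}_{I_1,J_1,I_2,J_2}$ by Theorem~\ref{PSR}.

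The heart of the argument is the overlap compatibility, and this is where path reversal enters. Two charts $LG^I_k(n)$ and $LG^{I'}_k(n)$ with $|I\cap I'|=k-1$ (i.e.\ $I'=I(i\to j)$ for a single source/sink swap) differ precisely by reversing a simple path $P$ from $b_i$ to $b_j$: the reversal map $R^P$ on networks realizes the change of source/sink roles, and Theorem~\ref{pathrev} states that $S_{t^P}\circ X_N = X_{N^P}\circ R^P$. I would use this to transport the bracket computed in the chart indexed by $I$ (on the image of $X_N$) to the chart indexed by $I'$ (on the image of $X_{N^P}$). The substitution $t\mapsto t^P t$ induced by $S_{t^P}$ is a Poisson automorphism of the target family provided the parameters transform correctly; combined with the realization Theorem~\ref{realiz}, which guarantees that every point of the overlap is $X_N(\EE_N)$ for some $N$, this forces the two prescribed brackets to coincide wherever both are defined. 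Since the transposition moves $I\to I(i\to j)$ generate all index swaps, and any two $k$-subsets are connected by a chain of such swaps, compatibility on all pairwise overlaps follows by iterating this single step.

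The main obstacle I anticipate is the bookkeeping of \emph{signs and the $\lambda$-rescaling} under path reversal, specifically verifying that $S_{t^P}$ intertwines $\{\cdot,\cdot\}_{I_1,J_1,I_2,J_2}$ with $\{\cdot,\cdot\}_{I'_1,J'_1,I'_2,J'_2}$ on the nose. Because the explicit brackets in Propositions~\ref{PSRE} and~\ref{PSRE2} contain factors of $t$, $s$, $\frac{1}{t-s}$ and sign data $\sigma_=$, $\sigma_\times$ that depend sensitively on the relative cyclic positions of the indices and on whether $j\le n_1$, one must check that under the index swap $i\leftrightarrow j$ these transform consistently with the $t\mapsto t^P t$ substitution coming from $S_{t^P}$. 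The crucial feature that makes this work is that the whole family, and in particular the distinction between the "separated'' and "mixed'' cases, depends only on $n_1$ and on the partition of boundary vertices between the two circles --- \emph{not} on the network or on the finer choice of which vertices are sources --- which is exactly why the glued bracket $\P^{n_1}_{\alpha,\beta}$ depends only on $n_1$. I would verify the Jacobi identity for $\P^{n_1}_{\alpha,\beta}$ not directly but by transport of structure: each chart bracket already satisfies Jacobi by Theorem~\ref{PSR} (being a genuine pushforward of the Poisson bracket $\Poi_N$), so the glued object is automatically Poisson on each chart and hence globally, completing the proof of~(i); statement~(ii) is then just the defining identification.
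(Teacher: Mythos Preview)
Your plan matches the paper's approach essentially step for step: define the bracket chart-by-chart via Theorem~\ref{PSR}, reduce overlap compatibility to the case $|I\cap I'|=k-1$, use the path reversal relation of Theorem~\ref{pathrev} together with the fact that the explicit brackets of Propositions~\ref{PSRE} and~\ref{PSRE2} are invariant under $S_{t^P}$, and then chain single swaps for the general case.

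One technical point you gloss over but the paper handles explicitly: Theorem~\ref{pathrev} applies only to paths that do \emph{not} intersect the cut, so you cannot invoke Theorem~\ref{realiz} directly. You need a realization result for the restricted class $\Net_{I_1,J_1,I_2,J_2}^{ij}$ of networks admitting such a path with $M(i,j)\not\equiv 0$. The paper supplies this by an explicit gadget (adding a pair of parallel edges of weights $1$ and $-1$ near $b_i$ and $b_j$, then resolving crossings with $N_{\id}$) that creates a cut-avoiding $i\to j$ path without changing the boundary measurements; density of the resulting image then suffices. You should flag this step in your plan, since without it the path-reversal argument does not reach every point of the overlap.
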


\proof
This result is an analog of Theorem~4.3 proved in \cite{GSV3}, and one may attempt to prove it in a similar way. The main challenge in implementing such an approach is to check that the Poisson structures defined for two distinct subspaces $LG_k^I(n)$ and $LG_k^{I'}(n)$ coincide on the intersection $LG_k^I(n)\cap LG_k^{I'}(n)$. For the case of networks in an annulus, the direct check becomes too cumbersome. We suggest to bypass this difficulty in the following way. 

 Assume first that $|I\cap I'|=k-1$ and take $i\in I\setminus I'$, $j\in I'\setminus I$. 
Denote by  $\Net_{I_1,J_1,I_2,J_2}^{ij}$ the set of networks in $\Net_{I_1,J_1,I_2,J_2}$ satisfying the following two conditions: $M(i,j)$ does not vanish identically and there exists a path from $b_i$ to $b_j$ that does not intersect
the cut. The set $\Net_{I'_1,J'_1,I'_2,J'_2}^{ji}$ is defined similarly, with the roles of $i$ and $j$ interchanged.
Clearly, the path reversal introduced in Section~\ref{PathRev} establishes a bijection between  $\Net_{I_1,J_1,I_2,J_2}^{ij}$ 
%of networks in $\Net_{I_1,J_1,I_2,J_2}$ containing a path from $i$ to $j$ and the set 
and $\Net_{I'_1,J'_1,I'_2,J'_2}^{ji}$. 
%of networks in $\Net_{I'_1,J'_1,I'_2,J'_2}$ containing a path from $j$ to $i$. 
Moreover, a suitable modification of Theorem~\ref{realiz} remains true for networks 
in $\Net_{I_1,J_1,I_2,J_2}^{ij}$: these networks represent all rational matrix function such that corresponding
component of the matrix does not vanish identically. To see that we use the following construction. Let $v$ be the neighbor of $b_i$ in $N$ and $u$ be the neighbor of $b_j$ in $N$. Add two new white vertices $v'$ and $v''$ and two new black vertices $u'$ and $u''$. Replace edge $(b_i,v)$ by the edges $(b_i,v')$ and $(v',v)$ so that the weight of
the obtained path is equal to the weight of the replaced edge. In a similar way, replace $(u,b_j)$ by $(u,u')$ and 
$(u',b_j)$. Besides, add edges $(v',v'')$ and $(u'',u')$ of weight~1 and two parallel edges $(v'',u'')$, one of weight~1, and the other of weight $-1$. Finally, resolve all the arising intersections with the help of the
network $N_{\id}$. Since the set of functions representable via networks in $\Net_{I_1,J_1,I_2,J_2}^{ij}$ is dense in
the space of all rational matrix functions, 
the 2-parametric family $\{\cdot,\cdot\}_{I_1,J_1,I_2,J_2}$ is defined uniquely already by the fact that $M_N$ is Poisson for any $N\in \Net_{I_1,J_1,I_2,J_2}^{ij}$.
Recall that the boundary measurement map $M_N$ factors through $\FF_N$; clearly, the
same holds for the Grassmannian boundary measurement map $X_N$. Besides, the path reversal map  $R^P$ commutes with the projection $y\:\EE_N\to \FF_N$ and commutes with $X_N$ up to $S_{t^P}$. 
Finally, Poisson brackets satisfying relations~\eqref{psre1}-\eqref{psre4} 
and~\eqref{psre21}-\eqref{psre23} commute with  $S_{t^P}$.
%both the boundary measurement map and the projection $y\:\EE_N\to \FF_N$.
Therefore, Poisson structures $\{\cdot,\cdot\}_{I_1,J_1,I_2,J_2}$ and $\{\cdot,\cdot\}_{I'_1,J'_1,I'_2,J'_2}$ coincide on $LG_k^I(n)\cap LG_k^{I'}(n)$.
If $|I\cap I'|=r<k-1$, we consider a sequence $(I=I^{(0)},I^{(1)},\dots,I^{(k-r)}=I')$ such that $|I^{(t)}\cap I^{(t+1)}|=k-1$ for all $t=0,\dots,k-r-1$ and apply to each pair $( I^{(t)},I^{(t+1)})$ the same reasoning as above.

\endproof

\section{Acknowledgments}

We wish to express gratitude to A.~Postnikov who explained to us the details
of his construction and to V.~Fock, S.~Fomin and N.~Reshetikhin for stimulating discussions. M.~G.~was supported in part by NSF Grants DMS \#0400484 and DMS \#0801204. 
M.~S.~was supported in part by NSF Grants DMS \#0401178 and PHY \#0555346.  A.~V.~was supported in part by ISF Grant \#1032/08.

\end{document}